\newtheorem{thm}{Theorem}[section]
\newtheorem{prop}[thm]{Proposition}
\newtheorem{lem}[thm]{Lemma}
\newtheorem{cor}[thm]{Corollary}
\newtheorem{rem}[thm]{Remark}
\newtheorem{rems}[thm]{Remarks}
\newtheorem{defi}[thm]{Definition}
\newtheorem{exo}{\bf\large Exercice}
\newcommand{\R}{\mathbb{R}}
\newcommand{\N}{\mathbb{N}}
\newcommand{\C}{\mathbb{C}}
\newcommand{\I}{\infty}
\newcommand{\Sum}{\displaystyle \sum}
\newcommand{\Int}{\displaystyle \int}
\newcommand{\Inf}{\displaystyle \inf}
\newcommand{\Sup}{\displaystyle \sup}
\newcommand{\Liminf}{\displaystyle \liminf}
\newcommand{\Limsup}{\displaystyle \limsup}
\newcommand{\beq}{\begin{eqnarray}}
\newcommand{\eeq}{\end{eqnarray}}
\newcommand{\bq}{\begin{equation}}
\newcommand{\eq}{\end{equation}}
\newcommand{\beqn}{\begin{eqnarray*}}
\newcommand{\eeqn}{\end{eqnarray*}}
\newcommand{\bex}{\begin{exo}}
\newcommand{\eex}{\end{exo}}
\newcommand{\ben}{\begin{enumerate}}
\newcommand{\een}{\end{enumerate}}
\let\de=\delta
\let\r=\rho
\let\vf=\varphi
\def\cC{{\mathcal C}}
\def\cD{{\mathcal D}}
\def\cL{{\mathcal L}}
\def\cP{{\mathcal P}}
\def\na{\nabla}
\def\eqdefa{\buildrel\hbox{\footnotesize def}\over =}
\author{Hajer Bahouri}
\address{University of Tunis ElManar, Faculty of Sciences of Tunis,
Department of Mathematics.} \email{hajer.bahouri@fst.rnu.tn}
\thanks{H.B. is grateful to the Laboratory of
PDE and Applications at the Faculty of Sciences of Tunis.}
\author{Mohamed Majdoub}
\address{University of Tunis ElManar, Faculty of Sciences of Tunis,
Department of Mathematics.} \email{mohamed.majdoub@fst.rnu.tn}
\thanks{M. M. is grateful to the Laboratory of
PDE and Applications at the Faculty of Sciences of Tunis.}
\author{Nader Masmoudi}
\address{New York University \\
The Courant Institute for Mathematical Sciences.}
\email{masmoudi@courant.nyu.edu}
\thanks{N. M is partially supported by an NSF Grant DMS-0703145}
\title[On the lack of compactness...]
{On  the lack of compactness in the 2D critical Sobolev embedding}
\date{\today}
\begin{document}

\begin{abstract}
This paper is devoted to the description of the lack of
compactness of $H^1_{rad}(\R^2)$ in the Orlicz space. Our result
is expressed in terms of the concentration-type examples derived
by P. -L.  Lions in \cite{Lions1}. The approach that we adopt to
establish this characterization is completely different from the
methods used in the study of the lack of compactness of Sobolev
embedding in Lebesgue spaces and take into account the variational
aspect of Orlicz spaces. We also investigate the feature of the
solutions of non linear wave equation with exponential growth, where the Orlicz norm plays a decisive role.
\end{abstract}


\subjclass[2000]{...}


\keywords{Sobolev critical exponent, Trudinger-Moser inequality,
Orlicz space, lack of compactness, non linear wave equation,
Strichartz estimates.}

\maketitle \tableofcontents


\section{Introduction}


\subsection{Lack of compactness in the Sobolev embedding in Lebesgue spaces}


Due to the scaling invariance of the critical Sobolev embedding
$$\dot H^s(\R^d)\longrightarrow L^{p}(\R^d),$$ in the case where $d \geq 3$ with $0\leq
s<d/2$ and $p=2d/(d-2s),$ no compactness properties may be expected.
Indeed if~$u \in \dot H^s \setminus\{0\}$, then for any
sequence~$(y_n)$ of points of~$\R^d $ tending to the infinity and
for any sequence~$(h_n)$ of positive real numbers tending to~$0 $ or
to   infinity, the sequences~$(\tau_{y_n} u )$ and~$(\delta_{h_n} u
)$, where we denote~$\delta_{h_n} u(\cdot)= \frac{1}{h_n^{\frac d
p}}u(\frac{\cdot}{h_n})$, converge weakly to~$0$ in~$\dot H^s $ but
are not relatively compact in~$ L^p $ since~$\| \tau_{y_n} u
\|_{L^p} = \|
 u \|_{L^p} $  and ~$\| \delta_{h_n} u \|_{L^p} =  \|
 u \|_{L^p} $. \\

\vskip 2ex

After the pioneering works of P. -L.  Lions \cite{Lions1} and
\cite{Lions2}, several works have been devoted to the study of the
lack of compactness in critical Sobolev embeddings, for the sake of
geometric problems and the understanding of   features of solutions
of nonlinear partial differential equations. This question was
investigated through several angles: for instance, in \cite{Ge1} the
lack of compactness is describe in terms of defect measures, in
\cite{Ge2} by means of profiles  and in \cite{jaffard} by the use of
nonlinear wavelet approximation theory. Nevertheless, it has been
shown in all these results that translational and scaling invariance
are the sole responsible for the defect of compactness of the
embedding of~$\dot H^s$ into~$
L^p $ and more generally in Sobolev spaces in the~$ L^q$ frame. \\

As it is pointed above, the study of  the lack of compactness in
critical Sobolev embedding supply us numerous information on
solutions of nonlinear  partial differential equations whether in
elliptic frame or evolution frame. For example, one can mention the
description of bounded energy sequences of solutions to the
defocusing semi-linear wave equation $$ \square u + u^5 =0$$ in $ \R
\times \R^3$, up to remainder terms small in energy norm in
\cite{BG} or the sharp estimate of the span time life of the
focusing critical semi-linear  wave equation by means of the size of
energy of the Cauchy data in the remarkable work of \cite{km}.
\\

Roughly speaking, the lack of compactness in the critical Sobolev
embedding$$\dot H^s(\R^d)\hookrightarrow L^{p}(\R^d)$$ in the case
where $d \geq 3$ with $0\leq s<d/2$ and $p=2d/(d-2s),$ is
characterized in the following terms: a  sequence~$(u_n)_{n\in \N}$
bounded in~$\dot H^s(\R^d)$ can be decomposed up to a subsequence
extraction on
 a finite sum of orthogonal profiles such that the remainder
converges to zero in~$L^{p}(\R^d)$ as the number of the sum and
~$n$ tend to~$\infty$.\\
This description still holds in the more general case of Sobolev
spaces in the~$ L^q$ frame (see \cite{jaffard}).


\subsection{Critical $2D$ Sobolev embedding}


It is well known that $H^1(\R^2)$ is continuously embedded  in all
lebesgue spaces $L^p(\R^2)$ for $2\leq p<\infty$, but not in
$L^{\infty}(\R^2 )$.  A  short proof of this fact is given in
Appendix A for the convenience of the reader. On the other hand, it
is also known (see for instance  \cite{km}) that  $H^1(\R^2)$ embed
in  ${\rm BMO} (\R^2) \cap L^2 (\R^2)    $,  where ~$BMO(\R^d )$
denotes the space of bounded mean oscillations which is  the space
of locally integrable functions~$f$ such that \[ \|f\|_{BMO}\eqdefa
\sup_{B}\frac 1 {|B|}\int_B |f-f_B|\,dx< \infty \quad
\mbox{with}\quad f_B\eqdefa \frac 1 {|B|}\int_B f \,dx.\] The above
supremum being taken over
the set of Euclidean balls~$B$,~$|\cdot |$ denoting the Lebesgue measure.  \\

In this paper, we rather investigate the lack of compactness in
Orlicz space ${\mathcal L}$ (see Definition \ref{deforl} below)
which arises naturally in the study of non linear wave equation with
exponential growth. As, it will be shown in  Appendix B, the spaces
 ${\mathcal L}$ and {\rm BMO} are not comparable.  \\

Let us now introduce the so-called Orlicz spaces on $\R^d$ and some
related basic facts. (For the sake of completeness, we postpone to
an appendix some additional properties on Orlicz spaces).

\begin{defi}\label{deforl}\quad\\
Let $\phi : \R^+\to\R^+$ be a convex increasing function such that
$$
\phi(0)=0=\lim_{s\to 0^+}\,\phi(s),\quad
\lim_{s\to\infty}\,\phi(s)=\infty.
$$
We say that a measurable function $u : \R^d\to\C$ belongs to
$L^\phi$ if there exists $\lambda>0$ such that
$$
\Int_{\R^d}\,\phi\left(\frac{|u(x)|}{\lambda}\right)\,dx<\infty.
$$
We denote then \bq \label{norm}
\|u\|_{L^\phi}=\Inf\,\left\{\,\lambda>0,\quad\Int_{\R^d}\,\phi\left(\frac{|u(x)|}{\lambda}\right)\,dx\leq
1\,\right\}. \eq
\end{defi}
It is easy to check that $L^\phi$ is a $\C$-vectorial space and
$\|\cdot\|_{L^\phi}$ is a norm. Moreover, we have the following
properties.\\ \noindent$\bullet$ For $\phi(s)=s^p,\, 1\leq
p<\infty$,  $L^\phi$ is nothing else than the  Lebesgue space
$L^p$.\\ \noindent$\bullet$ For  $\phi_\alpha(s)={\rm e}^{\alpha
s^2}-1$, with $\alpha>0$,  we claim  that
$L^{\phi_\alpha}=L^{\phi_1}.$ It is actually a direct consequence
of Definition \ref{deforl}.\\ \noindent$\bullet$ We may replace in
\eqref{norm} the number $1$ by any positive constant. This change
the norm $\|\cdot\|_{L^\phi}$ to an equivalent norm.\\
\noindent$\bullet$  For $u\in L^\phi$ with $A:=\|u\|_{L^\phi}>0$,
we have the following property \bq \label{norm1}
\left\{\,\lambda>0,\quad\Int_{\R^d}\,\phi\left(\frac{|u(x)|}{\lambda}\right)\,dx\leq
1\,\right\}=[A, \infty[\,. \eq

In what follows we shall fix  $d=2$, $\phi(s)={\rm e}^{ s^2}-1$ and
denote the Orlicz space $L^\phi$ by ${\mathcal L}$ endowed with the
norm $\|\cdot\|_{\mathcal L}$ where the number $1$ is replaced by
the constant $\kappa$ that will be fixed in Identity  \eqref{Mos2}
below.  As it is already mentioned, this change does not have any
impact on  the definition of Orlicz space. It is easy to see that
${\mathcal L}\hookrightarrow L^p$
for every $2\leq p<\infty$. \\
The 2D critical Sobolev embedding in Orlicz space ${\mathcal L}$
states as follows:
\begin{prop}\label{prop2D-embed}
\begin{equation}
\label{2D-embed} \|u\|_{{\mathcal
L}}\leq\frac{1}{\sqrt{4\pi}}\|u\|_{H^1}.
\end{equation}
\end{prop}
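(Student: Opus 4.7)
The plan is to deduce the embedding from the Moser--Trudinger inequality on $\R^2$, which is presumably the content of identity \eqref{Mos2} that fixes the constant $\kappa$ in the definition of $\|\cdot\|_{\mathcal L}$. That inequality asserts the existence of a universal constant $\kappa>0$ such that, for every $w\in H^1(\R^2)$ with $\|w\|_{H^1}\leq 1$,
\[
\int_{\R^2}\bigl({\rm e}^{4\pi w^2}-1\bigr)\,dx \leq \kappa.
\]
Granting this estimate (which is the sharp Adachi--Tanaka/Ruf version on the whole plane), the proof reduces to a simple rescaling argument.

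Given a nonzero $u\in H^1(\R^2)$, set $w = u/\|u\|_{H^1}$, which satisfies $\|w\|_{H^1}=1$, and apply the Moser--Trudinger inequality to $w$ to get
\[
\int_{\R^2}\Bigl({\rm e}^{\,4\pi\, u^2/\|u\|_{H^1}^{2}}-1\Bigr)\,dx \leq \kappa.
\]
Now choose $\lambda = \|u\|_{H^1}/\sqrt{4\pi}$ so that $4\pi/\|u\|_{H^1}^{2}=1/\lambda^{2}$. The previous inequality then reads
\[
\int_{\R^2}\phi\!\left(\frac{|u(x)|}{\lambda}\right)dx \leq \kappa,
\]
with $\phi(s)={\rm e}^{s^{2}}-1$. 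By the definition of the Orlicz norm $\|\cdot\|_{\mathcal L}$ (built with the constant $\kappa$ in place of $1$, which is legitimate by the third bulleted property after Definition \ref{deforl}), this membership of $\lambda$ in the admissible set forces
\[
\|u\|_{\mathcal L} \leq \lambda = \frac{1}{\sqrt{4\pi}}\,\|u\|_{H^1},
\]
which is the claimed inequality. The case $u=0$ is trivial.

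The only real content of the proof is the Moser--Trudinger estimate on the full plane; once it is available, the proposition is a one-line rescaling. The constant $\sqrt{4\pi}$ in the statement is directly traceable to the sharp exponent $4\pi$ in Moser's inequality, and the fact that the $H^1$ norm (rather than the homogeneous $\|\nabla u\|_{L^2}$ norm) appears is exactly what is needed to control the non-compactness at infinity in the unbounded domain $\R^2$. The main obstacle, therefore, is not the proposition itself but establishing (or citing) the Moser--Trudinger inequality with the sharp constant $4\pi$ on $\R^2$; this is presumably done via Schwarz symmetrization reducing to radial functions, followed by Moser's one-dimensional change of variables and a careful treatment of the decay at infinity through the $L^2$ part of the $H^1$ norm.
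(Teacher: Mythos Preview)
Your proof is correct and is essentially identical to the paper's own argument: normalize so that $\|u\|_{H^1}=1$, apply the sharp Trudinger--Moser inequality \eqref{Mos2} (Proposition~\ref{proptm}), and read off $\|u\|_{\mathcal L}\leq 1/\sqrt{4\pi}$. The paper likewise treats \eqref{Mos2} as a black box (citing \cite{Ruf}), so your remarks about its proof are extra but accurate.
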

\begin{rems}\quad\\
{\bf a)} Inequality \eqref{2D-embed} is insensitive to space translation but not invariant under scaling nor oscillations.\\
{\bf b)} The embedding of $H^1(\R^2)$ in  ${\mathcal L}$  is sharp
within the context of Orlicz spaces. In other words, the target
space $\cL$ cannot be replaced by an essentially smaller Orlicz
space. However, this target space can be improved if we allow
different function spaces than Orlicz spaces. More precisely \bq
\label{sharp-embed} H^1(\R^2)\hookrightarrow BW(\R^2), \eq where the
Br{\'e}zis-Wainger space $BW(\R^2)$ is defined via
$$
\|u\|_{BW}:=\Big(\int_0^1\Big(\frac{u^*(t)}{\log({\rm
e}/t)}\Big)^2\,\frac{dt}{t}\Big)^{1/2}+\Big(\int_1^\I\,u^*(t)^2\,dt\Big)^{1/2},
$$
where $u^*$ denotes the rearrangement function of $u$ given by
$$
u^*(t)=\inf\Big\{\,\lambda>0;\quad |\{x;\quad |u(x)|>\lambda\}|\leq
t\,\Big\}.
$$
The embedding \eqref{sharp-embed} is sharper than \eqref{2D-embed} as $BW(\R^2)\varsubsetneqq \cL$. It is also optimal with respect to all rearrangement invariant Banach function spaces. For more details on this subject, we refer the reader to \cite{BW, Cianchi, EKP, Hans, HMT, MP-PAMS}.\\
{\bf c)} In higher dimensions ($d=3$ for example), the equivalent of Embedding \eqref{sharp-embed} is
$$
H^1(\R^3)\hookrightarrow L^{6,2}(\R^3),
$$
where $L^{6,2}$ is the classical Lorentz space. Notice that
$L^{6,2}$ is a rearrangement invariant Banach space but not an
Orlicz space.
\end{rems}
To end this short introduction to Orlicz spaces, let us point out
that the embedding~\eqref{2D-embed} derives immediately from the
following Trudinger-Moser type inequalities:
\begin{prop} \label{mostrud} Let $\alpha\in [0,4\pi [$. A constant $c_\alpha$ exists
such that
\begin{equation}
\label{Mos1} \Int_{\R^2}\,\left({\rm e}^{\alpha
|u|^2}-1\right)\,dx\leq c_\alpha \|u\|_{L^2(\R^2)}^2
\end{equation}
for all $u$ in $H^1(\R^2)$ such that $\|\nabla
u\|_{L^2(\R^2)}\leq1$. Moreover, if $\alpha\geq 4\pi$, then
(\ref{Mos1}) is false.
\end{prop}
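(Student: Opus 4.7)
\medskip

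\noindent\textbf{Plan of proof.}
The first step is to reduce to the radial setting. Let $u^{\sharp}$ be the Schwarz (non-increasing spherical) rearrangement of $|u|$. Because the distribution functions of $|u|$ and $u^{\sharp}$ coincide, both the left-hand side of \eqref{Mos1} and $\|u\|_{L^2}$ are preserved, while the Pólya--Szegő inequality gives $\|\nabla u^{\sharp}\|_{L^2}\leq\|\nabla u\|_{L^2}\leq 1$. Hence one may assume $u\geq 0$ is radial and non-increasing from the origin.

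Next I would split $\R^2$ according to whether $u$ is large or small. Set $A=\{x\in\R^2: u(x)>1\}$. Since $u$ is radially decreasing, $A$ is a ball $B_R$, and by Chebyshev $|A|\leq\|u\|_{L^2}^2$, so $A$ has finite measure. On $A^c=\{u\leq 1\}$ the elementary bound
\[
e^{\alpha u^2}-1 \;=\; \sum_{k\geq 1}\frac{\alpha^k u^{2k}}{k!} \;\leq\; u^2\sum_{k\geq 1}\frac{\alpha^k}{k!} \;=\; (e^{\alpha}-1)\,u^2
\]
immediately yields $\int_{A^c}(e^{\alpha u^2}-1)\,dx\leq(e^{\alpha}-1)\|u\|_{L^2}^2$, so that part is essentially free.

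The main work is on $A$, and this is where I would invoke the classical Moser inequality on the bounded domain $B_R$: for every $\beta<4\pi$ and every $v\in H^1_0(B_R)$ with $\|\nabla v\|_{L^2}\leq 1$ one has $\int_{B_R}e^{\beta v^2}\,dx\leq C_\beta|B_R|$. Apply this to $v:=(u-1)_+$, which by radial monotonicity belongs to $H^1_0(B_R)$ and still satisfies $\|\nabla v\|_{L^2}\leq 1$. Since $\alpha<4\pi$, pick $\varepsilon>0$ so small that $\beta:=\alpha(1+\varepsilon)<4\pi$; Young's inequality then gives
\[
\alpha u^2 \;=\; \alpha(v+1)^2 \;\leq\; \alpha(1+\varepsilon)v^2+\alpha\!\left(1+\tfrac{1}{\varepsilon}\right)+\alpha
\]
on $A$, so $\int_A(e^{\alpha u^2}-1)\,dx\leq e^{C(\alpha,\varepsilon)}C_\beta|A|\leq C'_\alpha\|u\|_{L^2}^2$. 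Combining the two regions produces the desired constant $c_\alpha$, and the dependence blows up as $\alpha\uparrow 4\pi$, as expected.

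The sharpness statement, namely the failure of \eqref{Mos1} for $\alpha\geq 4\pi$, I would establish by testing against Moser's concentrating family
\[
u_n(x)=\frac{1}{\sqrt{2\pi}}
\begin{cases}
\sqrt{\log n}, & |x|\leq 1/n,\\[2pt]
\dfrac{\log(1/|x|)}{\sqrt{\log n}}, & 1/n\leq|x|\leq 1,\\[2pt]
0, & |x|>1.
\end{cases}
\]
A direct computation gives $\|\nabla u_n\|_{L^2}=1$ and $\|u_n\|_{L^2}^2\to 0$, whereas
\[
\int_{|x|\leq 1/n}\!\bigl(e^{\alpha u_n^2}-1\bigr)\,dx \;=\; \pi\bigl(n^{\alpha/(2\pi)-2}-n^{-2}\bigr)
\]
stays bounded away from $0$ (and in fact blows up when $\alpha>4\pi$) as soon as $\alpha\geq 4\pi$. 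Hence no constant $c_\alpha$ can satisfy \eqref{Mos1}, which is the expected obstruction. The delicate point in the whole argument is the calibration of $\varepsilon$ in Step~3: one needs to absorb the linear-in-$v$ cross term produced by expanding $(v+1)^2$ without consuming all of the room $4\pi-\alpha$ available in the sharp Moser inequality on $B_R$; everything else is either symmetrization or a direct Taylor expansion.
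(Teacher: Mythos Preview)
The paper does not supply its own proof of this proposition; it simply cites Adachi--Tanaka~\cite{AT} for ``a first proof of these inequalities using rearrangement.'' Your proposal is correct and is precisely such a rearrangement-based argument: symmetrize via P\'olya--Szeg\H{o}, split at the level $u=1$, apply the classical Moser inequality on the ball $\{u>1\}$ to $(u-1)_+\in H^1_0$, and handle $\{u\le 1\}$ by the Taylor bound. For the sharpness, your test sequence $u_n$ is exactly the paper's family $f_\alpha$ with $\alpha=\log n$, and your computation agrees with the one carried out in Proposition~\ref{f_alpha}. One cosmetic slip: in the Young step the correct bound is $\alpha(1+\varepsilon)v^2+\alpha\bigl(1+\tfrac{1}{\varepsilon}\bigr)$; your extra $+\alpha$ is superfluous but harmless.
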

A first proof of these inequalities using rearrangement can be
found in \cite{AT}. In other respects, it is well known (see for
instance \cite{Ruf}) that the value $\alpha=4\pi$ becomes
admissible in (\ref{Mos1}) if we require $\|u\|_{H^1(\R^2)}\leq1$
rather than $\|\nabla u\|_{L^2(\R^2)}\leq1$. In other words, we
have
\begin{prop}\label{proptm}
\begin{equation}
\label{Mos2} \sup_{\|u\|_{H^1}\leq 1}\;\;\Int_{\R^2}\,\left({\rm
e}^{4\pi |u|^2}-1\right)\,dx:=\kappa<\infty,
\end{equation} and this is false for $\alpha>4\pi$.
\end{prop}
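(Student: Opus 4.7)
The plan is to reduce the upper bound to the classical Moser inequality on a bounded domain via rearrangement and a level-set splitting, and to establish the sharpness by testing on Moser's concentrating sequence. First, I would perform Schwarz symmetrization: replacing $u$ by its radial decreasing rearrangement $u^*$ preserves both $\|u\|_{L^2}$ and the integral $\int_{\R^2}(e^{4\pi u^2}-1)\,dx$, while $\|\nabla u^*\|_{L^2} \leq \|\nabla u\|_{L^2}$ by the Polya--Szeg\H{o} inequality. Thus one may assume $u$ is non-negative, radial and non-increasing.

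For such $u$, introduce the radius $R \geq 0$ defined by $u(R) = 1$ (with $R = 0$ when $\|u\|_{L^\infty} \leq 1$) and decompose
\begin{equation*}
\int_{\R^2}(e^{4\pi u^2}-1)\,dx = \int_{B_R}(e^{4\pi u^2}-1)\,dx + \int_{\R^2 \setminus B_R}(e^{4\pi u^2}-1)\,dx.
\end{equation*}
On the exterior one has $0 \leq u \leq 1$, hence $u^{2k} \leq u^2$ for every $k \geq 1$, and the Taylor expansion of $e^{4\pi u^2}-1$ gives immediately
\begin{equation*}
\int_{\R^2 \setminus B_R}(e^{4\pi u^2}-1)\,dx \leq (e^{4\pi}-1)\|u\|_{L^2}^2 \leq e^{4\pi}.
\end{equation*}
On the interior, substitute $v = u - 1$ extended by zero outside $B_R$, so that $v \in H^1_0(B_R)$ with $\|\nabla v\|_{L^2}^2 \leq 1 - \|u\|_{L^2}^2$; since $u \geq 1$ on $B_R$, also $|B_R| \leq \|u\|_{L^2}^2$. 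The elementary inequality $(v+1)^2 \leq (1+\eta)v^2 + 1 + 1/\eta$ yields
\begin{equation*}
\int_{B_R} e^{4\pi u^2}\,dx \leq e^{4\pi(1+1/\eta)} \int_{B_R} e^{4\pi(1+\eta)v^2}\,dx,
\end{equation*}
and a careful calibration of $\eta$, combined with the classical Moser inequality on the ball $B_R$, bounds the remaining integral by a constant multiple of $|B_R| \leq \|u\|_{L^2}^2 \leq 1$.

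For the sharpness, I would test on Moser's radial sequence
\begin{equation*}
M_n(x) = \frac{1}{\sqrt{2\pi}}\begin{cases} \sqrt{\log n}, & |x| \leq 1/n, \\ \log(1/|x|)/\sqrt{\log n}, & 1/n \leq |x| \leq 1, \\ 0, & |x| > 1, \end{cases}
\end{equation*}
for which direct computation gives $\|\nabla M_n\|_{L^2}^2 = 1$ and $\|M_n\|_{L^2}^2 = O(1/\log n)$. After the small renormalization $\tilde M_n := M_n/\|M_n\|_{H^1}$ one has $\|\tilde M_n\|_{H^1} = 1$, and restricting to the cap $|x| \leq 1/n$ yields $\int_{|x|\leq 1/n} e^{\alpha \tilde M_n^2}\,dx \sim \pi\, n^{\alpha/(2\pi)-2}$, which diverges precisely when $\alpha > 4\pi$.

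The main obstacle is the interior estimate: a naive choice of $\eta$ gives a prefactor $e^{4\pi/\eta}$ that is not absorbed by the measure gain $|B_R| \leq \|u\|_{L^2}^2$. The standard remedy is to split according to the size of $\|u\|_{L^2}$: when $\|u\|_{L^2} \geq \delta_0 > 0$, the exponential prefactor is under control and the argument closes via Moser on $B_R$; when $\|u\|_{L^2} < \delta_0$, one exploits $\|\nabla u\|_{L^2}^2 \leq 1 - \delta_0^2 < 1$ to apply the subcritical Moser--Trudinger inequality of Adachi--Tanaka with exponent $4\pi(1-\delta_0^2) < 4\pi$, at the cost of losing a fraction of the critical exponent in exchange for extra room in the gradient norm. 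This trade is the genuine novelty of the critical bound on $\R^2$ beyond Moser's bounded-domain theorem, and is the technical heart of the proof.
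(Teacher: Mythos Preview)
The paper does not supply its own proof of this proposition: the sentence immediately preceding it simply records that the result is due to Ruf~\cite{Ruf} and states \eqref{Mos2} as known. So there is no in-paper argument to compare against, and your sketch is in fact an outline of Ruf's original proof. The symmetrization, the level-set splitting at $\{u=1\}$, the exterior estimate, and the sharpness test on the Moser sequence are all correct and standard.

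Your resolution of the interior obstacle, however, contains a genuine logical slip. In the small-$L^2$ branch you write ``when $\|u\|_{L^2}<\delta_0$, one exploits $\|\nabla u\|_{L^2}^2\leq 1-\delta_0^2$.'' But from $\|u\|_{H^1}\leq 1$ one only obtains $\|\nabla u\|_{L^2}^2\leq 1-\|u\|_{L^2}^2$, and when $\|u\|_{L^2}<\delta_0$ this upper bound lies in $(1-\delta_0^2,\,1]$, not below $1-\delta_0^2$. The gradient gap you invoke holds in the \emph{other} branch $\|u\|_{L^2}\geq\delta_0$, which you have already dispatched via Moser on $B_R$. Your dichotomy therefore treats the easy regime twice and leaves the hard one---$\|u\|_{L^2}$ small, $\|\nabla u\|_{L^2}$ close to $1$---unaddressed. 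This is precisely the regime of the normalized Moser sequence $\tilde M_n$, and it is exactly where the naive estimate collapses: with $\eta$ forced to be of order $\|u\|_{L^2}^2$, the product $|B_R|\cdot e^{4\pi/\eta}$ behaves like $\|u\|_{L^2}^2\exp\bigl(c/\|u\|_{L^2}^2\bigr)$ and diverges. The subcritical inequality \eqref{Mos1} is of no help either, since its constant $c_\alpha$ blows up as $\alpha\uparrow 4\pi$.

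Ruf closes this gap not by a fixed-threshold split but by a sharper analysis on the inner ball: after passing to the one-dimensional Moser reformulation, the loss coming from the cross term $2v$ in $u^2=(v+1)^2=v^2+2v+1$ is shown to be compensated \emph{quantitatively} by the Dirichlet-energy defect $1-\|\nabla v\|_{L^2}^2\geq |B_R|$. That precise balance is the actual content of \eqref{Mos2} over \eqref{Mos1}, and it is missing from your sketch.
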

Now, it is obvious that Estimate (\ref{Mos2}) allows to prove
Proposition~\ref{prop2D-embed}. Indeed, without loss of generality,
we may assume that $\|u\|_{H^1}=1$  which leads under
Proposition~\ref{proptm}  to the inequality
 $\|u\|_{{\mathcal L}}\leq\frac{1}{\sqrt{4\pi}}$, which is the desired result.

\begin{rem}
Let us mention that a sharp form of Trudinger-Moser inequality in bounded domain was obtained in \cite{AD}.
\end{rem}

\subsection{Lack of compactness in 2D critical Sobolev embedding in Orlicz space}


The embedding~$H^1\hookrightarrow{\mathcal L}$  is non compact at
least for two reasons. The first reason is the lack of compactness
at infinity. A typical example is $u_k(x)=\varphi(x+x_k)$ where
$0\neq\varphi\in{\mathcal D}$ and $|x_k|\to\infty$. The second
reason is of concentration-type derived by  P.-L.  Lions
\cite{Lions1, Lions2} and illustrated by the following  fundamental
example  $f_{\alpha}$ defined by:
\begin{eqnarray*}
 f_{\alpha}(x)&=&\; \left\{
\begin{array}{cllll}0 \quad&\mbox{if}&\quad
|x|\geq 1,\\\\ -\frac{\log|x|}{\sqrt{2\alpha\pi}} \quad
&\mbox{if}&\quad {\rm e}^{-\alpha}\leq |x|\leq 1 ,\\\\
\sqrt{\frac{\alpha}{2\pi}}\quad&\mbox{if}&\quad |x|\leq {\rm
e}^{-\alpha},
\end{array}
\right.
\end{eqnarray*}
where $\alpha>0$. \\

Straightforward  computations show that
$\|f_{\alpha}\|_{L^2(\R^2)}^2=\frac{1}{4\alpha}(1-{\rm
e}^{-2\alpha})-\frac{1}{2}{\rm e}^{-2\alpha}$ and $\|\nabla
f_{\alpha}\|_{L^2(\R^2)}=1$. Moreover, it can be seen easily that
$f_{\alpha}\rightharpoonup 0$ in $H^1(\R^2)$ as $\alpha\to \infty$
or $\alpha\to 0$. However, the lack of compactness of this sequence
in the Orlicz space ${\cL}$ occurs only when $\alpha$ goes to
infinity. More precisely, we have
\begin{prop}
\label{f_alpha}
~$f_{\alpha}$ denoting the sequence defined above, we have the following convergence results:\\
\noindent{\bf a)} $\|f_{\alpha}\|_{\mathcal L}\rightarrow \frac{1}{\sqrt{4\pi}}\quad\mbox{as}\quad \alpha\to\infty\,.$\\
\noindent{\bf b)} $\|f_{\alpha}\|_{\mathcal L}\rightarrow
0\quad\mbox{as}\quad \alpha\to0\,.$
\end{prop}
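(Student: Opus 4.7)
My plan for part (a) is to show the two one-sided inequalities separately. For the upper bound, I would simply invoke the Trudinger--Moser embedding of Proposition~\ref{prop2D-embed}, which gives $\|f_\alpha\|_{\mathcal L} \leq \|f_\alpha\|_{H^1}/\sqrt{4\pi}$, combined with the elementary identity $\|f_\alpha\|_{H^1}^2 = 1 + \|f_\alpha\|_{L^2}^2$ and the fact, already recalled in the text, that $\|f_\alpha\|_{L^2}^2 \to 0$ as $\alpha \to \infty$. This yields $\limsup_{\alpha\to\infty} \|f_\alpha\|_{\mathcal L} \leq 1/\sqrt{4\pi}$ without any further computation.

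For the matching lower bound, I would fix $\delta \in (0,1)$, set $\lambda_\delta := \sqrt{(1-\delta)/(4\pi)}$, and show that $\|f_\alpha\|_{\mathcal L} > \lambda_\delta$ for $\alpha$ large. By the very definition of the Orlicz norm this reduces to the pointwise-in-$\alpha$ lower bound
\[
\int_{\R^2} \bigl( e^{f_\alpha^2/\lambda_\delta^2} - 1 \bigr)\,dx \;>\; \kappa .
\]
The key observation is that it suffices to integrate over the central disk $\{|x|\leq e^{-\alpha}\}$ on which $f_\alpha$ is constant and equal to $\sqrt{\alpha/(2\pi)}$; a direct computation gives the exact value $\pi \bigl(e^{2\alpha\delta/(1-\delta)} - e^{-2\alpha}\bigr)$, which diverges to $+\infty$ as $\alpha \to \infty$. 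Letting $\delta \to 0$ then yields $\liminf_{\alpha\to\infty} \|f_\alpha\|_{\mathcal L} \geq 1/\sqrt{4\pi}$, completing part~(a).

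For part~(b) the strategy is to exploit the uniform smallness of $f_\alpha$ as $\alpha \to 0$: one has $\|f_\alpha\|_{L^\infty} = \sqrt{\alpha/(2\pi)} \to 0$, and the $L^2$ norm also tends to $0$, as can be read off from the explicit formula for $\|f_\alpha\|_{L^2}^2$ (the two terms cancel at first order). Fixing an arbitrary $\lambda>0$, for $\alpha$ sufficiently small one has the pointwise bound $f_\alpha^2/\lambda^2 \leq 1$, so the elementary inequality $e^t - 1 \leq 2t$ on $[0,1]$ gives
\[
\int_{\R^2} \bigl( e^{f_\alpha^2/\lambda^2} - 1 \bigr)\,dx \;\leq\; \frac{2}{\lambda^2}\,\|f_\alpha\|_{L^2}^2 \;\longrightarrow\; 0.
\]
In particular the integral is below $\kappa$ for $\alpha$ small, whence $\|f_\alpha\|_{\mathcal L} \leq \lambda$; since $\lambda>0$ was arbitrary, $\|f_\alpha\|_{\mathcal L} \to 0$.

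The main obstacle is really the lower bound in part~(a): one must recognise that the concentration on the central plateau, where $f_\alpha$ attains its supremum $\sqrt{\alpha/(2\pi)}$, is what drives the Orlicz norm up to the critical value $1/\sqrt{4\pi}$, while the annular logarithmic region is irrelevant (its contribution is merely nonnegative). The mild gap $1+\|f_\alpha\|_{L^2}^2$ between $\|f_\alpha\|_{H^1}^2$ and $1$ vanishes in the limit, so no renormalisation of $f_\alpha$ is needed; all the analysis reduces to the two explicit integrals above.
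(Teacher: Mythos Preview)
Your proof is correct. The lower bound in part~(a) is the same idea as the paper's: both restrict the defining integral to the central disk $\{|x|\le e^{-\alpha}\}$ where $f_\alpha$ is constant, and exploit that the resulting expression forces $\lambda$ close to $1/\sqrt{4\pi}$. Your upper bound in~(a) is a little more direct than the paper's: you invoke the embedding $\|u\|_{\mathcal L}\le \|u\|_{H^1}/\sqrt{4\pi}$ of Proposition~\ref{prop2D-embed} and use $\|f_\alpha\|_{H^1}\to 1$, whereas the paper goes through the finer inequality~\eqref{Mos1} at exponent $4\pi-\varepsilon$ (which yields the integral $\le C_\varepsilon\|f_\alpha\|_{L^2}^2\to 0$ rather than merely $\le\kappa$); both routes are equivalent for the stated conclusion. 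For part~(b) your argument is genuinely simpler: the paper chooses the explicit test value $\lambda=\alpha^{1/4}$ and bounds the two radial pieces by hand, while you observe that $\|f_\alpha\|_{L^\infty}\to 0$ lets you linearise $e^t-1\le 2t$ and reduce everything to $\|f_\alpha\|_{L^2}\to 0$. This is more conceptual and avoids the explicit computation, at no cost.
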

\begin{proof}[Proof of Proposition \ref{f_alpha}] Let us first go to the proof of the first assertion.
If
$$
\int\,\left({\rm
e}^{\frac{|f_\alpha(x)|^2}{\lambda^2}}-1\right)\,dx\leq \kappa,
$$
then
$$
2\pi\,\int_0^{{\rm e}^{-\alpha}}\,\left({\rm e}^{\frac{\alpha}{2\pi
\lambda^2}}-1\right)\,r\,dr\leq \kappa,
$$
which implies that
$$
\lambda^2\geq\frac{\alpha}{2\pi\log\left(1+\frac{\kappa{\rm
e}^{2\alpha}}{\pi}\right)}.
$$
It follows that
$$
\liminf_{\alpha\to\infty}\,\|f_\alpha\|_{\mathcal
L}\geq\frac{1}{\sqrt{4\pi}}\,.
$$
To conclude, it suffices to show that
$$
\limsup_{\alpha\to\infty}\,\|f_\alpha\|_{\mathcal
L}\leq\frac{1}{\sqrt{4\pi}}\,.
$$
Let us fix $\varepsilon>0$. Taking advantage of  Trudinger-Moser
inequality and the fact that $\|f_\alpha\|_{L^2}\to 0$, we infer
\begin{eqnarray*}
\int\,\left({\rm e}^{(4\pi-\varepsilon)|f_\alpha(x)|^2}-1\right)\,dx&\leq&C_\varepsilon\,\|f_\alpha\|_{L^2}^2,\\
&\leq& \kappa,\quad\mbox{for}\quad \alpha\geq \alpha_\varepsilon\,.
\end{eqnarray*}
Hence, for any $\varepsilon>0$,
$$
\limsup_{\alpha\to\infty}\,\|f_\alpha\|_{\mathcal
L}\leq\frac{1}{\sqrt{4\pi-\varepsilon}},
$$
which ends the proof of the first assertion. To prove the second
one, let us write \beqn
\int\,\left({\rm e}^{\tiny{\frac{|f_\alpha(x)|^2}{\alpha^{1/2}}}}-1\right)\,dx&=&2\pi\int_0^{{\rm e}^{-\alpha}}\,\left({\rm e}^{\frac{\sqrt{\alpha}}{2\pi}}-1\right)r\,dr+2\pi\int_{{\rm e}^{-\alpha}}^1\left({\rm e}^{\frac{\log^2 r}{2\pi\alpha^{3/2}}}-1\right)r\,dr\\
&\leq&\pi\left({\rm e}^{\frac{\sqrt{\alpha}}{2\pi}}-1\right){\rm
e}^{-2\alpha}+2\pi\left(1-{\rm e}^{-\alpha}\right){\rm
e}^{\frac{\alpha^{1/2}}{2\pi}}\,. \eeqn This implies that, for
$\alpha$ small enough, $\|f_\alpha\|_{\cL}\leq \alpha^{1/4}$, which
leads to the result.
\end{proof}
The difference between the behavior of these families in Orlicz
space when $\alpha\to 0$ or $\alpha\to\infty$ comes from the fact
that the concentration effect is only displayed by this family when
$\alpha\to\infty$. Indeed, in the case where $\alpha\to\infty$ we
have the following result which does not occur when $\alpha\to 0$.

\begin{prop}
\label{concentration} ~$f_{\alpha}$ being the family of functions
defined above, we have
$$|\nabla f_{\alpha}|^2\to \delta(x=0)\quad\mbox{and}\quad {\rm e}^{4\pi|f_{\alpha}|^2}-1\to 2\pi\delta(x=0)\quad\mbox{as}\quad \alpha\to\infty\quad\mbox{in}\quad {\cD}'(\R^2)\,.$$
\end{prop}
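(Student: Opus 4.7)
The plan is to pair each quantity with an arbitrary test function $\varphi\in\cD(\R^2)$ and to evaluate the limit as $\alpha\to\infty$, using the three-part explicit formula for $f_{\alpha}$ to split each integral region by region.

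For $|\nabla f_{\alpha}|^2$, a direct computation shows that $\nabla f_{\alpha}$ vanishes outside the annulus $A_{\alpha}:=\{e^{-\alpha}\le|x|\le 1\}$, while on $A_{\alpha}$ one has $|\nabla f_{\alpha}(x)|^2=\frac{1}{2\pi\alpha|x|^2}$. Passing to polar coordinates yields
\[
\int_{\R^2}|\nabla f_{\alpha}|^2\,\varphi(x)\,dx=\frac{1}{2\pi\alpha}\int_0^{2\pi}\!\!\int_{e^{-\alpha}}^{1}\varphi(r\omega)\,\frac{dr}{r}\,d\theta.
\]
Decomposing $\varphi(r\omega)=\varphi(0)+(\varphi(r\omega)-\varphi(0))$, the first piece contributes exactly $\varphi(0)$ since $\int_{e^{-\alpha}}^{1}dr/r=\alpha$, while the second is $O(1/\alpha)$ because $|\varphi(r\omega)-\varphi(0)|\le C\,r$ for $\varphi\in\cD$. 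This gives $\langle|\nabla f_{\alpha}|^2,\varphi\rangle\to\varphi(0)$.

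For the exponential, I split $\R^2$ into the inner disk $D_{\alpha}:=\{|x|\le e^{-\alpha}\}$, the annulus $A_{\alpha}$, and the exterior (where the integrand vanishes). On $D_{\alpha}$ the function $e^{4\pi|f_{\alpha}|^2}-1$ reduces to the constant $e^{2\alpha}-1$, so
\[
\int_{D_{\alpha}}(e^{4\pi|f_{\alpha}|^2}-1)\varphi\,dx=(e^{2\alpha}-1)\bigl(\pi e^{-2\alpha}\varphi(0)+o(e^{-2\alpha})\bigr)\longrightarrow\pi\varphi(0).
\]
The main obstacle is the annular integral $\int_{A_{\alpha}}(e^{2\log^2|x|/\alpha}-1)\varphi(x)\,dx$. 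The change of variable $r=e^{-\alpha\tau}$, $\tau\in[0,1]$, converts it into
\[
\alpha\int_0^{2\pi}\!\!\int_0^{1}\bigl(e^{-2\alpha\tau(1-\tau)}-e^{-2\alpha\tau}\bigr)\,\varphi(e^{-\alpha\tau}\omega)\,d\tau\,d\theta,
\]
and since the exponent $\tau(1-\tau)$ is of order $\tau$ near $0$ and of order $1-\tau$ near $1$ but bounded away from zero in between, I would split at $\tau=1/2$.

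On $[0,1/2]$ the substitution $v=\alpha\tau$ brings the integrand to $e^{-2v}(e^{2v^2/\alpha}-1)\varphi(e^{-v}\omega)$; the elementary bound $2v^2/\alpha\le v$ (valid for $v\le\alpha/2$) dominates it by $\|\varphi\|_{\infty}\,e^{-v}$, and since it tends pointwise to $0$, dominated convergence yields a vanishing contribution in the limit. On $[1/2,1]$ the substitution $w=\alpha(1-\tau)$ makes the subtracted term $e^{-2\alpha\tau}$ uniformly $O(e^{-\alpha})$ and hence negligible, while in the main term $e^{-\alpha\tau}=e^{-\alpha}e^{w}\to 0$ pointwise, so that $\varphi(e^{-\alpha\tau}\omega)\to\varphi(0)$; the same domination by $\|\varphi\|_{\infty}e^{-w}$ and dominated convergence give
\[
\varphi(0)\int_0^{2\pi}\!\!\int_0^{\infty}e^{-2w}\,dw\,d\theta=\pi\varphi(0).
\]
Summing the three contributions yields $\langle e^{4\pi|f_{\alpha}|^2}-1,\varphi\rangle\to 2\pi\varphi(0)$; the concentration mass is distributed equally between the tiny inner disk $D_{\alpha}$ and the inner edge of the annulus near $r=e^{-\alpha}$.
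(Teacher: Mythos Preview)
Your proof is correct. The treatment of $|\nabla f_{\alpha}|^2$ and of the inner disk contribution $\pi\varphi(0)$ is identical to the paper's. The difference lies in the annular integral $\int_{A_{\alpha}}(e^{2\log^2|x|/\alpha}-1)\varphi(x)\,dx$.

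The paper first writes $\varphi=\varphi(0)+(\varphi-\varphi(0))$ on the annulus and then evaluates the two resulting radial integrals via a separate Lemma which shows, by a change of variable of the form $y=\sqrt{2/\alpha}\,(-\log r-\alpha/2)$ and the asymptotic $\int_0^{A}e^{y^2}\,dy\sim e^{A^2}/(2A)$, that
\[
I_\alpha=\int_{e^{-\alpha}}^{1} r\,e^{\frac{2}{\alpha}\log^2 r}\,dr\to 1,\qquad
J_\alpha=\int_{e^{-\alpha}}^{1} r^2\,e^{\frac{2}{\alpha}\log^2 r}\,dr\to \tfrac{1}{3}.
\]
The first limit gives the annular contribution $2\pi\varphi(0)\bigl(I_\alpha-\tfrac12(1-e^{-2\alpha})\bigr)\to\pi\varphi(0)$, and the second controls the remainder term coming from $|\varphi-\varphi(0)|\le C r$.

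Your approach bypasses this lemma entirely: after the substitution $r=e^{-\alpha\tau}$ you split at $\tau=1/2$, rescale each half ($v=\alpha\tau$ and $w=\alpha(1-\tau)$), and apply dominated convergence directly with the majorants $e^{-v}$ and $e^{-w}$. This is more self-contained and avoids the Dawson-type asymptotic, at the cost of not isolating the explicit limits $I_\alpha\to1$, $J_\alpha\to1/3$ as reusable facts. Either route is perfectly adequate here; your observation that the mass $2\pi$ splits as $\pi+\pi$ between the inner disk and the inner edge of the annulus is a nice byproduct of the method.
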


\begin{proof}[Proof of Proposition \ref{concentration}]
 Straightforward  computations give for any  smooth compactly supported function $\varphi$
\beqn
\Int\,|\nabla f_{\alpha}(x)|^2\varphi(x)\,dx&=&\frac{1}{2\pi\alpha}\,\Int_{{\rm e}^{-\alpha}}^1\,\Int_0^{2\pi}\,\frac{\varphi(r\cos\theta,r\sin\theta)}{r}\,dr\,d\theta\\
&=&\varphi(0)+\frac{1}{2\pi\alpha}\,\Int_{{\rm
e}^{-\alpha}}^1\,\Int_0^{2\pi}\,\frac{\varphi(r\cos\theta,r\sin\theta)-\varphi(0)}{r}\,dr\,d\theta
\eeqn Since
$\Big|\frac{\varphi(r\cos\theta,r\sin\theta)-\varphi(0)}{r}\Big|\leq
\|\nabla\varphi\|_{L^\infty}$, we deduce that $\Int\,|\nabla
f_{\alpha}(x)|^2\varphi(x)\,dx\to \varphi(0)$ as $\alpha\to\infty$,
which ensures the result.  Similarly, we have

\beqn
\int\left({\rm e}^{4\pi|f_\alpha(x)|^2}-1\right)\varphi(x)dx&=&\int_0^{{\rm e}^{-\alpha}}\Int_0^{2\pi}\left({\rm e}^{2\alpha}-1\right)\varphi(r\cos\theta,r\sin\theta)rdrd\theta\\&+&\int_{{\rm e}^{-\alpha}}^1\int_0^{2\pi}\left({\rm e}^{\frac{2}{\alpha}\log^2 r}-1\right)\varphi(r\cos\theta,r\sin\theta)rdrd\theta\\
&=&\pi \varphi(0)\left(1-{\rm e}^{-2\alpha}\right)+2\pi\varphi(0)\int_{{\rm e}^{-\alpha}}^1\left({\rm e}^{\frac{2}{\alpha}\log^2 r}-1\right)r dr\\
&+&\int_0^{{\rm e}^{-\alpha}}\int_0^{2\pi}\left({\rm
e}^{2\alpha}-1\right)\left(\varphi(r\cos\theta,r\sin\theta)-\varphi(0)\right)r
dr d\theta\\&+&\int_{{\rm e}^{-\alpha}}^1\Int_0^{2\pi}\left({\rm
e}^{\frac{2}{\alpha}\log^2
r}-1\right)\left(\varphi(r\cos\theta,r\sin\theta)-\varphi(0)\right)r
dr d\theta. \eeqn We conclude by using the following lemma.
\end{proof}
\begin{lem}
\label{add1}
When $\alpha$ goes to infinity
\bq
\label{1}
I_\alpha:=\int_{{\rm e}^{-\alpha}}^1\;r\,{\rm e}^{\frac{2}{\alpha}\log^2 r}\,
dr\to 1
\eq
and
\bq
\label{13}
J_\alpha:=\int_{{\rm e}^{-\alpha}}^1\;r^2\,{\rm
e}^{\frac{2}{\alpha}\log^2 r}\, dr\,\to \frac{1}{3}.
\eq
\end{lem}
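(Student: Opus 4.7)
The first step is to map both integrals into Laplace-type integrals on a fixed interval. Setting $r = e^{-\alpha\tau}$, i.e., $\log r = -\alpha\tau$, one finds $\frac{2}{\alpha}\log^2 r = 2\alpha\tau^2$ and $dr = -\alpha e^{-\alpha\tau}\,d\tau$, which converts
\[
I_\alpha = \alpha\int_0^1 e^{-2\alpha\tau(1-\tau)}\,d\tau, \qquad J_\alpha = \alpha\int_0^1 e^{-\alpha\tau(3-2\tau)}\,d\tau,
\]
with non-negative polynomial phases $\phi_I(\tau)=2\tau(1-\tau)$ and $\phi_J(\tau)=\tau(3-2\tau)$.

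Next, I would identify the zeros of the phases on $[0,1]$. The phase $\phi_I$ vanishes at both endpoints with nondegenerate linear behaviour ($\phi_I'(0)=2$, $\phi_I'(1)=-2$), while $\phi_J$ vanishes only at $\tau=0$ with $\phi_J'(0)=3$. Away from these zeros the phase is bounded below by a positive constant, so for any fixed $\eps\in(0,1/2)$ the ``bulk'' contributions
\[
\alpha\int_\eps^{1-\eps} e^{-2\alpha\tau(1-\tau)}\,d\tau \quad\text{and}\quad \alpha\int_\eps^1 e^{-\alpha\tau(3-2\tau)}\,d\tau
\]
are bounded by $\alpha\,e^{-c_\eps\alpha}$ for some $c_\eps>0$, hence tend to $0$ as $\alpha\to\infty$. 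The limits are therefore entirely determined by boundary layers of thickness $O(1/\alpha)$ at the zeros of the phase.

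To handle the boundary layer near $\tau=0$ in $I_\alpha$, I would use the elementary sandwich $2(1-\eps)\tau \leq 2\tau(1-\tau)\leq 2\tau$, valid for $\tau\in[0,\eps]$. Integration then yields
\[
\frac{1-e^{-2\alpha\eps}}{2}\,\leq\,\alpha\int_0^\eps e^{-2\alpha\tau(1-\tau)}\,d\tau\,\leq\,\frac{1-e^{-2(1-\eps)\alpha\eps}}{2(1-\eps)},
\]
so letting first $\alpha\to\infty$ and then $\eps\to 0$ produces a contribution of exactly $1/2$. By the symmetry $\tau\leftrightarrow 1-\tau$ of $\phi_I$, the layer near $\tau=1$ contributes another $1/2$, hence $I_\alpha\to 1$. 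A parallel sandwich $(3-2\eps)\tau\leq \tau(3-2\tau)\leq 3\tau$ on $[0,\eps]$ handles the unique boundary layer of $J_\alpha$ and yields the limit $1/3$.

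The approach presents no real analytic obstacle, since the phases are explicit polynomials and every resulting integral is an elementary exponential. The only point requiring some care is the ordering of limits (first $\alpha\to\infty$, then $\eps\to 0$), which is the standard bookkeeping of Laplace-type asymptotics.
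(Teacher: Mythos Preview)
Your argument is correct. After the substitution $r=e^{-\alpha\tau}$ the computations check out, the sandwich bounds on the boundary layers are accurate, and the symmetry $\tau\leftrightarrow 1-\tau$ of $\phi_I$ indeed produces two equal endpoint contributions of $1/2$.

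Your route differs from the paper's. The paper completes the square in the exponent via the linear change $y=\sqrt{2/\alpha}\,(-\log r-\alpha/2)$ (and the analogous one for $J_\alpha$), which recasts each integral as a constant times $\int_0^A e^{y^2}\,dy$ on an interval with $A\to\infty$; it then invokes the integration-by-parts asymptotic $\int_0^A e^{y^2}\,dy\sim e^{A^2}/(2A)$. Your substitution instead produces Laplace-type integrals $\alpha\int_0^1 e^{-\alpha\phi(\tau)}\,d\tau$ on a fixed interval and localizes to the zeros of $\phi$ by elementary sandwiching. The paper's version is a bit shorter once the erfi-type asymptotic is granted, while your approach is more self-contained (no special-function asymptotics) and fits the standard Laplace-method template, so it would adapt more readily to variants with other exponents. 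Either way the analysis is at the same level of difficulty; the only minor point to tidy is that the $\liminf/\limsup$ bookkeeping of ``first $\alpha\to\infty$, then $\eps\to 0$'' could be stated explicitly rather than sketched.
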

\begin{proof}[Proof of Lemma \ref{add1}]
The change of variable $y:=\sqrt{\frac{2}{\alpha}}\left(-\log r-\frac{\alpha}{2}\right)$ yields
$$
I_\alpha=2\sqrt{\frac{\alpha}{2}}\,{\rm e}^{-\frac{\alpha}{2}}\,\int_0^{\sqrt{\frac{\alpha}{2}}}\,{\rm e}^{y^2}\,dy\,.
$$
Taking advantage of the following obvious equivalence at infinity which can be derived by integration by parts
\bq
\label{equiva}
\int_0^{A}\,{\rm e}^{y^2}\,dy\sim \frac{{\rm e}^{A^2}}{2A},
\eq
we deduce \eqref{1}. \\ Similarly for the second term, the change of variable $y:=\sqrt{\frac{2}{\alpha}}\left(-\log r-\frac{3}{4}\alpha\right)$ implies that
\begin{eqnarray*}
J_\alpha&=&\sqrt{\frac{\alpha}{2}}\,{\rm e}^{-\frac{9}{8}\alpha}\,\int_{-\frac{3}{2}\sqrt{\frac{\alpha}{2}}}^{\frac{1}{2}\sqrt{\frac{\alpha}{2}}}\;{\rm e}^{y^2}\,dy\\&=&\sqrt{\frac{\alpha}{2}}\,{\rm e}^{-\frac{9}{8}\alpha}\,\int_0^{\frac{1}{2}\sqrt{\frac{\alpha}{2}}}\;{\rm e}^{y^2}\,dy+\sqrt{\frac{\alpha}{2}}\,{\rm e}^{-\frac{9}{8}\alpha}\,\int_0^{\frac{3}{2}\sqrt{\frac{\alpha}{2}}}\;{\rm e}^{y^2}\,dy.
\end{eqnarray*}
According to \eqref{equiva}, we get \eqref{13}.
\end{proof}

\begin{rem}
When $\alpha$ goes to zero, we get a spreading rather than a concentration.
Notice also  that for small  values of the function, our Orlicz space behaves like
$L^2$ (see Proposition \ref{Orl-L2}) and simple computations show that  $\|f_{\alpha}\|_{L^2(\R^2)}$ goes to zero when
$\alpha$ goes to zero.
\end{rem}
In fact, the conclusion of Proposition \ref{concentration} is
available for more general radial sequences. More precisely, we
have the following result due to P. -L.  Lions  (in a slightly different form):
\begin{prop}
\label{generalconcent} Let $(u_n)$ be a sequence in
$H^1_{rad}(\R^2)$ such that
$$
u_n\rightharpoonup 0\quad\mbox{in}\quad
H^1,\quad\liminf_{n\to\infty}\,\|u_n\|_{\cL}>0\quad\mbox{and}\quad\lim_{R\to\infty}\;
\limsup_{n\to\infty}\,\int_{|x|>R}\,|u_n(x)|^2\,dx=0\,.
$$
Then, there exists a constant $c>0$ such that
\bq
\label{concent}
|\nabla u_n(x)|^2\,dx\rightharpoondown\,\mu\geq c\,\delta(x=0)\quad (n\to\infty)
\eq
weakly in the sense of measures.
\end{prop}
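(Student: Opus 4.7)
The plan is to argue by contradiction. After extracting a subsequence, one may assume that the bounded nonnegative measures $|\nabla u_n|^2\,dx$ converge weakly-$*$ to some finite Radon measure $\mu$ on $\R^2$, and the goal is to rule out $\mu(\{0\})=0$; supposing this equality would force $\|u_n\|_{\cL}\to 0$, contradicting $\liminf_n\|u_n\|_{\cL}>0$. Two standard ingredients underpin the argument. First, combining $u_n\rightharpoonup 0$ in $H^1$ with Rellich--Kondrachov compactness on bounded domains and the tightness hypothesis $\lim_R\limsup_n\int_{|x|>R}|u_n|^2\,dx=0$ yields $\|u_n\|_{L^2(\R^2)}\to 0$. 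Second, the radial Strauss estimate
$$|u(r)|\le \frac{C}{\sqrt r}\,\|u\|_{H^1},\qquad u\in H^1_{\mathrm{rad}}(\R^2),$$
provides an $L^\infty$ bound on any set $\{|x|\ge \eps\}$ depending only on $\|u\|_{H^1}$ and $\eps$.

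Now I would fix $0<\eps<R$ and a radial smooth partition of unity $1=\chi_0+\chi_1+\chi_2$ adapted to the cover $B(0,2\eps)$, $\{\eps<|x|<2R\}$, $\{|x|>R\}$, with $|\nabla\chi_0|\le C/\eps$, and set $v_n^{(i)}=\chi_i u_n$, so that $\|u_n\|_{\cL}\le \sum_{i=0}^2\|v_n^{(i)}\|_{\cL}$ by the triangle inequality. For the annular and exterior pieces $i\in\{1,2\}$, the Strauss bound gives $\|v_n^{(i)}\|_{L^\infty}\le C_{\eps,R}$ uniformly in $n$, and the elementary inequality $e^t-1\le t\,e^t$ yields
$$\int\bigl(e^{|v_n^{(i)}|^2/\lambda^2}-1\bigr)\,dx\le\frac{\exp\bigl(\|v_n^{(i)}\|_{L^\infty}^2/\lambda^2\bigr)}{\lambda^2}\,\|v_n^{(i)}\|_{L^2}^2,$$
which tends to $0$ for every fixed $\lambda>0$ since $\|u_n\|_{L^2}\to 0$; hence $\|v_n^{(1)}\|_{\cL}$ and $\|v_n^{(2)}\|_{\cL}$ tend to $0$ at fixed $\eps,R$.

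The decisive step is the interior piece $v_n^{(0)}$, where the assumption $\mu(\{0\})=0$ enters. For any $\eta>0$, I would pick $\eps>0$ small with $\mu(\partial B(0,2\eps))=0$ and $\mu(\overline{B(0,2\eps)})<\eta$, so that $\int_{|x|<2\eps}|\nabla u_n|^2\,dx\to\mu(\overline{B(0,2\eps)})<\eta$. The product rule together with $\|u_n\|_{L^2}\to 0$ gives $\|\nabla v_n^{(0)}\|_{L^2}^2<3\eta$ and $\|v_n^{(0)}\|_{L^2}\to 0$ for $n$ large. Applying the Trudinger--Moser inequality of Proposition \ref{mostrud} to $v_n^{(0)}/\sqrt{3\eta}$ with any $\alpha<4\pi$ then yields
$$\int\bigl(e^{\alpha|v_n^{(0)}|^2/(3\eta)}-1\bigr)\,dx\le \frac{c_\alpha}{3\eta}\,\|v_n^{(0)}\|_{L^2}^2\longrightarrow 0,$$
so $\|v_n^{(0)}\|_{\cL}^2\lesssim \eta$ for $n$ large. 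Given $\delta>0$, I would first fix $\eta$ small, then $\eps$ and $R$, and finally $n$ large, to assemble the three bounds into $\|u_n\|_{\cL}<\delta$, which is the desired contradiction and therefore forces $\mu(\{0\})\ge c$ for some $c>0$.

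The main obstacle is precisely the juggling of the three scales: the interior control is only subcritical (one cannot take $\alpha=4\pi$ in Trudinger--Moser), and its quantitative dependence on $\eta$ must be compatible with the $L^\infty$ bounds on the two outer pieces, whose constants blow up as $\eps,1/R\to 0$. Radial symmetry is indispensable: without it the limit measure $\mu$ could place atoms at any point, so the Strauss radial bound is what forces all the concentration to occur at the origin alone.
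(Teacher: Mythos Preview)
The paper does not supply its own proof of this proposition; it is stated as a result of P.-L.~Lions and left without argument. Your contradiction proof is correct and follows the standard concentration--compactness route: localize near the origin, control the interior piece via Trudinger--Moser once $\mu(\{0\})=0$ forces the local gradient energy to be small, and kill the exterior piece using the radial $L^\infty$ bound together with $\|u_n\|_{L^2}\to 0$ (the latter established exactly as in Lemma~\ref{basicproper}).

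Two minor remarks. First, the third region $\{|x|>R\}$ in your partition is redundant: the Strauss estimate already gives a uniform $L^\infty$ bound on all of $\{|x|\ge\eps\}$, and combined with $\|u_n\|_{L^2(\R^2)}\to 0$ this dispatches the entire exterior in a single piece via the inequality $e^t-1\le t e^t$ you wrote down. Second, the ``obstacle'' you describe in your last paragraph does not actually arise: in your own order of choices you fix $\eta$, then $\eps$ (and $R$), and only then send $n\to\infty$, so the $L^\infty$ constants on the outer pieces are frozen before $n$ moves and never compete with the interior estimate. The argument is clean once the quantifier order is made explicit.
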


\begin{rem}
The hypothesis of compactness at infinity
$$
\lim_{R\to\infty}\;
\limsup_{n\to\infty}\,\int_{|x|>R}\,|u_n(x)|^2\,dx=0
$$
is necessary to get  \eqref{concent}. For instance,
 $u_n(x)=\frac{1}{n}\,{\rm e}^{-|\frac{x}{n}|^2}$   satisfies $\|u_n\|_{L^2}=C>0$,
 $\|\nabla u_n\|_{L^2}\to 0$ and  $\liminf_{n\to\infty}\,\|u_n\|_{\cL}>0 $.
\end{rem}

\subsection{Fundamental Remark}
In order to describe  the lack of compactness of the Sobolev
embedding of~$H^1_{rad}(\R^2)$ in Orlicz space, we will make the
change of variable $s:=-\log r$, with $r=|x|$. We associate then to
any radial function $u$ on $\R^2$ a one space variable function $v$
defined by $v(s)=u({\rm e}^{-s}).$ It follows that \bq \label{prel1}
\|u\|_{L^2}^2=2\pi\int_{\R}\,|v(s)|^2{\rm e}^{-2s}\,ds, \eq

\bq \label{prel2} \|\nabla u\|_{L^2}^2=2\pi\int_{\R}\,|v'(s)|^2\,ds,
\quad \quad \mbox{and} \eq

\bq \label{prel3} \int_{\R^2}\,\left({\rm
e}^{|\frac{u(x)}{\lambda}|^2}-1\right)\,dx=2\pi\int_{\R}\,\left({\rm
e}^{|\frac{v(s)}{\lambda}|^2}-1\right){\rm e}^{-2s}\,ds. \eq

The starting point in our analysis is the following observation
related to the Lions's example
$$
\tilde{f_{\alpha}}(s):=f_{\alpha}({\rm
e}^{-s})=\sqrt{\frac{\alpha}{2\pi}}\,{\mathbf
L}\left(\frac{s}{\alpha}\right),
$$
where
\begin{eqnarray*}
{\mathbf L}(t)&=&\; \left\{
\begin{array}{cllll}0 \quad&\mbox{if}&\quad
t\leq 0,\\ t \quad
&\mbox{if}&\quad 0\leq t\leq 1 ,\\
1 \quad&\mbox{if}&t\geq 1.
\end{array}
\right.
\end{eqnarray*}
The sequence $\alpha\to\infty$ is called the scale and the function
${\mathbf L}$  the profile. In fact, the Lions's example generates
more elaborate situations which help us to understand the defect of
compactness of Sobolev embedding in Orlicz space. For example, it
can be seen that for the sequence $g_k:=f_k+f_{2k}$ we have
$g_k(s)=\sqrt{\frac{k}{2\pi}}\;{\psi}\left(\frac{s}{k}\right),$
where
\begin{eqnarray*}
\psi(t)&=&\; \left\{
\begin{array}{cllll}0 \quad&\mbox{if}&\quad
t\leq 0,\\ t+\frac{t}{\sqrt{2}} \quad
&\mbox{if}&\quad 0\leq t\leq 1 ,\\
1+\frac{t}{\sqrt{2}} \quad&\mbox{if}&1\leq t\leq 2,\\
1+\sqrt{2}\quad&\mbox{if}& t\geq 2.
\end{array}
\right.
\end{eqnarray*}
This is due to the fact that the scales $(k)_{k\in\N}$ and
$(2k)_{k\in\N}$ are not orthogonal (see Definition \ref{ortho}
below) and thus they give a unique profile. However, for the
sequence $h_k:=f_k+f_{k^2}$, the situation is completely different
and a decomposition under the form $$ h_k(x)\asymp
\sqrt{\frac{\alpha_k}{2\pi}}\;\psi\left(\frac{-\log
|x|}{\alpha_k}\right) $$ is not possible, where the symbol
$\asymp$ means that the difference is compact in the Orlicz space
${\cL}$. The reason behind is that the scales $(k)_{k\in\N}$ and
$(k^2)_{k\in\N}$ are orthogonal. \\ It is worth noticing that in
the above examples the support is a fixed compact, and thus at
first glance the construction cannot be adapted in the general
case. But as shown by the following example, no assumption on the
support  is needed to "display" lack of compactness in the Orlicz
space. Indeed, let $R_\alpha$ in $(0,\infty)$ such that
 \begin{equation}
 \label{Ralpha1}
 \frac{R_\alpha}{\sqrt{\alpha}}\to 0,\quad \alpha\to \infty,
 \end{equation}and
 \begin{equation}
 \label{Ralpha2}
 a:=\Liminf_{\alpha\to\infty}\,\Big(\frac{\log R_\alpha}{\alpha}\Big)>-\infty.
  \end{equation}
  We can take for instance $R_{\alpha}=\alpha^\theta$ with $\theta<1/2$ and then $a=0$, or $R_\alpha={\rm e}^{-\gamma \alpha}$ with $\gamma\geq 0$
  and then  $a=-\gamma$. Remark that Assumption \eqref{Ralpha1} implies that $a$ is always negative.
  Now, let us  define the sequence
  $g_\alpha(x):=f_\alpha(\frac{x}{R_\alpha})$. It is obvious that
  the family $g_\alpha$ is not uniformly supported in a fixed
  compact subset of $\R^2$, in the case when $R_{\alpha}=\alpha^\theta$ with $0 <
  \theta<1/2$. Now,
  arguing exactly as for Lions's example, we can easily show that
  $$
  \|g_\alpha\|_{L^2}\sim\frac{R_\alpha}{2\sqrt{\alpha}},\quad
   \|\nabla g_\alpha\|_{L^2}=1\quad\mbox{and}\quad \|g_\alpha\|_{\cL}^2\geq
   \frac{\alpha}{2\pi\log\left(1+\frac{\kappa}{\pi}\left(\frac{{\rm e}^\alpha}{R_\alpha}\right)^2\right)}.$$
 Hence, $g_\alpha\rightharpoonup 0$ in $H^1$ and
 $\Liminf_{\alpha\to\infty}\,\|g_\alpha\|_{{\cL}}>0$.\\

 Up to a subsequence extraction, straightforward computation yields  the strong convergence to zero in $H^1$ for the difference $f_\alpha-g_\alpha$, in the case
  when $a=0$, which implies that $g_\alpha(x)\asymp \sqrt{\frac{\alpha}{2\pi}}\,{\mathbf L}\Big(\frac{-\log|x|}{\alpha}\Big)$. However,
  in the case when $a<0$, the sequence $(g_\alpha)$ converges strongly to $f_\alpha(\frac{\cdot}{{\rm e}^{\alpha a}})$ in $H^1$ and then the profile is slightly different in the sense that $g_\alpha(x)\asymp \sqrt{\frac{\alpha}{2\pi}}\;{\mathbf L}_a\,\Big(\frac{-\log|x|}{\alpha}\Big)$ where ${\mathbf L}_a(s)={\mathbf L}(s+a)$.

To be more complete and in order to state our main result in a clear
way, let us  introduce some definitions as in \cite{Ge2} for
instance.
\begin{defi}
\label{ortho} A scale is a sequence $\underline{\alpha}:=(\alpha_n)$
of positive real numbers going to infinity. We shall say that two
scales ${\underline\alpha}$ and ${\underline\beta}$ are orthogonal (
in short $\underline{\alpha}\perp\underline{\beta}$) if
    $$
   \Big|\log\left({\beta_n}/{\alpha_n}\right)\Big|\to\infty.
    $$
\end{defi}
According to \eqref{prel1} and \eqref{prel2}, we introduce the
profiles as follows.
\begin{defi}
\label{Profi} The set of profiles is
$$
{\cP}:=\Big\{\;\psi\in L^2(\R,{\rm e}^{-2s}ds);\;\;\; \psi'\in
L^2(\R),\;\psi_{|]-\infty,0]}=0\,\Big\}.
$$
\end{defi}
\noindent Some remarks are in order:\\
{\bf a)} The limitation for scales tending to infinity  is justified by the behavior of $\|f_{\alpha}\|_{\mathcal L}$ stated in Proposition \ref{f_alpha}.\\
{\bf b)} The set ${\cP}$ is invariant under negative translations. More precisely, if $\psi\in{\cP}$ and $a\leq 0$ then $\psi_a(s):=\psi(s+a)$ belongs to ${\cP}$.\\
{\bf c)} It will be useful to observe that a profile (in the sense of Definition \ref{Profi}) is a continuous function since it belongs to $H^1_{loc}(\R)$.\\
{\bf d)} For a scale $\underline{\alpha}$ and a profile $\psi$,
define
$$g_{\underline{\alpha}, \psi}(x):=\sqrt{\frac{\alpha_n}{2\pi}}\;\psi\Big(\frac{-\log|x|}{\alpha_n}\Big)\,.$$
It is clear that, for any $\lambda>0$,
$$g_{\underline{\alpha}, \psi}=g_{\lambda\underline{\alpha}, \psi_{\lambda}},$$
where $ \psi_{\lambda}(t)=\frac{1}{\sqrt{\lambda}}\,\psi(\lambda
t)$.
\\
The next proposition illustrates the above definitions of scales and
profiles.
\begin{prop}
\label{single} Let $\psi\in{\cP}$ a profile, $(\alpha_n)$ any scale
and set
$$
g_n(x):=\sqrt{\frac{\alpha_n}{2\pi}}\;\psi\Big(\frac{-\log|x|}{\alpha_n}\Big).
$$
Then \bq \label{profile}
\frac{1}{\sqrt{4\pi}}\,\Sup_{s>0}\;\frac{|\psi(s)|}{\sqrt{s}}\leq
\Liminf_{n\to\infty}\,\|g_n\|_{{\cL}}\leq
\Limsup_{n\to\infty}\,\|g_n\|_{{\cL}}\leq
\frac{1}{\sqrt{4\pi}}\,\|\psi'\|_{L^2}. \eq
\end{prop}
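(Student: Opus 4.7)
The backbone of the argument is the radial change of variable $s=-\log|x|$, which together with \eqref{prel1}--\eqref{prel3} reduces the Orlicz modulus to a one-dimensional integral. Writing $v_n(s)=g_n(e^{-s})=\sqrt{\alpha_n/(2\pi)}\,\psi(s/\alpha_n)$ and substituting $t=s/\alpha_n$, one gets the basic identity
\[
\int_{\R^2}\!\Bigl(e^{|g_n(x)/\lambda|^2}-1\Bigr)\,dx\;=\;2\pi\alpha_n\int_0^{\infty}\!\Bigl(e^{\frac{\alpha_n|\psi(t)|^2}{2\pi\lambda^2}}-1\Bigr)e^{-2\alpha_n t}\,dt,
\]
and by the same substitution $\|\nabla g_n\|_{L^2}=\|\psi'\|_{L^2}$ (the factors $\sqrt{\alpha_n}$ coming from the rescaling cancel). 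The proof then splits into matching upper and lower estimates for $\|g_n\|_{\mathcal L}$.

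\textbf{Upper bound.} Assume $\|\psi'\|_{L^2}>0$ (the case $\psi\equiv 0$ being trivial). The plan is to apply the sub-critical Trudinger--Moser inequality of Proposition~\ref{mostrud} with $\alpha=4\pi-\varepsilon$ to $u=g_n/\|\psi'\|_{L^2}$, which satisfies $\|\nabla u\|_{L^2}=1$; this yields
\[
\int_{\R^2}\!\Bigl(e^{(4\pi-\varepsilon)|g_n|^2/\|\psi'\|_{L^2}^2}-1\Bigr)dx \;\le\; \frac{c_\varepsilon}{\|\psi'\|_{L^2}^2}\,\|g_n\|_{L^2}^2,
\]
so it only remains to show $\|g_n\|_{L^2}\to 0$. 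For this I would use the analogous identity for the $L^2$ norm together with the bound $|\psi(t)|\le\sqrt{t}\,\|\psi'\|_{L^2([0,t])}$ (which follows from $\psi(0)=0$ and Cauchy--Schwarz) and split $\int_0^{\infty}=\int_0^T+\int_T^{\infty}$: on $[0,T]$ the contribution is bounded by $\tfrac14\|\psi'\|_{L^2([0,T])}^2$, which is small when $T$ is small, while on $[T,\infty)$ one uses $\alpha_n^2 e^{-\alpha_n T}\to 0$ combined with $\psi\in L^2(\R,e^{-2s}\,ds)$. Once $\|g_n\|_{L^2}\to 0$ is established, the Trudinger--Moser estimate above is $\le\kappa$ for $n$ large, giving $\|g_n\|_{\mathcal L}\le\|\psi'\|_{L^2}/\sqrt{4\pi-\varepsilon}$; letting $\varepsilon\to 0$ concludes this half.

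\textbf{Lower bound.} Fix $s_0>0$ with $\psi(s_0)\neq 0$ and any $L<|\psi(s_0)|/\sqrt{4\pi s_0}$, so that $|\psi(s_0)|^2/(2\pi L^2) - 2s_0 > 0$ strictly. By continuity of $\psi$ (a profile lies in $H^1_{\mathrm{loc}}$), choose $\delta,\eta>0$ small enough so that $|\psi(t)|\ge |\psi(s_0)|-\eta$ for $t\in(s_0-\delta,s_0+\delta)$ and
\[
b:=\frac{(|\psi(s_0)|-\eta)^2}{2\pi L^2}-2(s_0+\delta)\;>\;0.
\]
Plugging $\lambda=L$ into the basic identity and keeping only the contribution from $(s_0-\delta,s_0+\delta)$, one obtains
\[
\int_{\R^2}\!\Bigl(e^{|g_n/L|^2}-1\Bigr)dx \;\ge\; 4\pi\delta\,\alpha_n\,\bigl(e^{\alpha_n b}-1\bigr)\;\longrightarrow\;+\infty,
\]
which exceeds the constant $\kappa$ for $n$ large. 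Hence $\|g_n\|_{\mathcal L}>L$ eventually, and taking the supremum over admissible $L$ and $s_0$ yields the left-hand inequality.

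\textbf{Main obstacle.} The only delicate step is the proof that $\|g_n\|_{L^2}\to 0$ despite the growing prefactor $\sqrt{\alpha_n}$ in $g_n$; the rest reduces to explicit Laplace-type computations. It is crucial to invoke the refined Trudinger--Moser inequality of Proposition~\ref{mostrud} with $L^2$ control on the right-hand side, since the naive embedding $\|g_n\|_{\mathcal L}\le\|g_n\|_{H^1}/\sqrt{4\pi}$ would yield only a bound in terms of $\|g_n\|_{H^1}$ rather than the sharper quantity $\|\psi'\|_{L^2}$.
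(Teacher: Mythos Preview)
Your proof is correct and follows essentially the same architecture as the paper's: for both bounds the key steps (the one-dimensional reduction, the proof that $\|g_n\|_{L^2}\to 0$ via $|\psi(t)|\le \sqrt{t}\,\|\psi'\|_{L^2([0,t])}$ and a splitting of the integral, and the continuity argument for the lower bound) coincide. The lower-bound arguments are simply contrapositives of each other.

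There is one difference worth noting in the upper bound. You go through the sub-critical inequality \eqref{Mos1} with $\alpha=4\pi-\varepsilon$ and then let $\varepsilon\to 0$; the paper instead applies the embedding \eqref{2D-embed} directly to get
\[
\limsup_{n\to\infty}\|g_n\|_{\mathcal L}\le \frac{1}{\sqrt{4\pi}}\limsup_{n\to\infty}\|g_n\|_{H^1}
=\frac{1}{\sqrt{4\pi}}\Bigl(\|\psi'\|_{L^2}^2+\limsup_{n\to\infty}\|g_n\|_{L^2}^2\Bigr)^{1/2}
=\frac{1}{\sqrt{4\pi}}\|\psi'\|_{L^2}.
\]
So your claim in the ``Main obstacle'' paragraph that the naive embedding $\|g_n\|_{\mathcal L}\le\|g_n\|_{H^1}/\sqrt{4\pi}$ would only yield a bound in terms of $\|g_n\|_{H^1}$ rather than $\|\psi'\|_{L^2}$ is not accurate: once $\|g_n\|_{L^2}\to 0$ is established, the $H^1$ norm itself converges to $\|\psi'\|_{L^2}$, and the paper exploits precisely this. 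Your route via \eqref{Mos1} is slightly longer but perfectly valid; it mirrors the argument the paper uses earlier in the proof of Proposition~\ref{f_alpha}(a).
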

\begin{proof}
A simple computation shows that
$$
\|g_n\|_{L^2}^2=\alpha_n^2\,\int_0^\infty\,|\psi(t)|^2\,{\rm
e}^{-2\alpha_n t}\,dt\quad\mbox{and}\quad \|\nabla
g_n\|_{L^2}=\|\psi'\|_{L^2(\R)}\,.
$$
In view of \eqref{2D-embed}, we have
$$
 \Limsup_{n\to\infty}\,\|g_n\|_{{\cL}}\leq\frac{1}{\sqrt{4\pi}}\,\|\psi'\|_{L^2}+\frac{1}{\sqrt{4\pi}}\,\limsup_{n\to\infty}\,\|g_n\|_{L^2}\,.
 $$
Therefore, to get the right hand side of \eqref{profile}, it
suffices to prove that the sequence $(g_n)$ converges strongly to
$0$ in $L^2$. To do so, let us first observe that
$$
\frac{\psi(t)}{\sqrt{t}}\to 0\quad\mbox{as}\quad t\to 0\,.
$$
Indeed
$$
|\psi(t)|=\Big|\int_0^t\,\psi'(\tau)\,d\tau\Big|\leq
\sqrt{t}\,\Big(\int_0^t\,|\psi'(\tau)|^2\,d\tau\Big)^{1/2},
$$
which ensures the result since $\psi'\in L^2(\R)$. Now,
$\varepsilon>0$ being fixed, we deduce the existence of $t_0>0$ such
that
$$
|\psi(t)|^2\leq \varepsilon\,t \quad\mbox{for}\quad 0<t<t_0\,.
$$
Therefore
$$
\alpha_n^2\,\int_0^{t_0}\,|\psi(t)|^2\,{\rm e}^{-2\alpha_n
t}\,dt\leq \varepsilon\,\int_0^\infty\,s{\rm
e}^{-2s}\,ds=\frac{\varepsilon}{4}\,.
$$
In other respects, by Lebesgue theorem
$$
\alpha_n^2\,\int_{t_0}^\infty\,|\psi(t)|^2\,{\rm e}^{-2\alpha_n
t}\,dt\to 0\quad\mbox{as}\quad n\to\infty\,.
$$
This leads to the strong convergence of $(g_n)$ to $0$ in $L^2$.

Let us now go to the proof of the left hand side inequality of
\eqref{profile}. Setting $L=\Liminf_{n\to\infty}\,\|g_n\|_{{\cL}}$,
we have  according to \eqref{norm1}  for fixed $\varepsilon>0$ and
$n$ large enough (up to a subsequence extraction)
$$
\int_{\R^2}\,\left({\rm
e}^{|\frac{g_n(x)}{L+\varepsilon}|^2}-1\right)\,dx\leq \kappa\,.
$$
A straightforward computation yields to
$$
\alpha_n\,\int_0^\infty\,{\rm e}^{2\alpha_n t
\Big(\frac{1}{4\pi(L+\varepsilon)^2}\left(\frac{\psi(t)}{\sqrt{t}}\right)^2-1\Big)}\,dt\leq
C,
$$
for some absolute constant $C$ and for $n$ large enough. Since $\psi(t)$ is continuous, we
deduce that necessarily, for any $t>0$,
$$
\frac{1}{\sqrt{4\pi}}\frac{|\psi(t)|}{\sqrt{t}}\,\leq
\,L+\varepsilon,
$$
which ensures the result.
\end{proof}
\subsection{Statement of the results}


Our first main goal is to establish that  the characterization  of
the lack of compactness of the embedding
$$
H^1_{rad}\hookrightarrow{\mathcal L},
$$
can be reduced to the Lion's example. More precisely, we shall prove
that the lack of compactness of this embedding can be described in
terms of an asymptotic decomposition  as follows:
\begin{thm}
\label{main} Let $(u_n)$ be a bounded sequence in
$H^1_{rad}(\R^2)$ such that \bq \label{main-assum1}
u_n\rightharpoonup 0, \eq \bq \label{main-assum2}
\limsup_{n\to\infty}\|u_n\|_{\mathcal L}=A_0>0, \quad \quad
\mbox{and} \eq \bq \label{main-assum3} \lim_{R\to\infty}\;
\limsup_{n\to\infty}\,\int_{|x|>R}\,|u_n|^2\,dx=0. \eq Then, there
exists a sequence $(\underline{\alpha}^{(j)})$ of pairwise
orthogonal scales and a sequence of profiles $(\psi^{(j)})$ in
${\cP}$ such that, up to a subsequence extraction, we have for all
$\ell\geq 1$, \bq \label{decomp}
u_n(x)=\Sum_{j=1}^{\ell}\,\sqrt{\frac{\alpha_n^{(j)}}{2\pi}}\;\psi^{(j)}\left(\frac{-\log|x|}{\alpha_n^{(j)}}\right)+{\rm
r}_n^{(\ell)}(x),\quad\limsup_{n\to\infty}\;\|{\rm
r}_n^{(\ell)}\|_{\mathcal
L}\stackrel{\ell\to\infty}\longrightarrow 0. \eq Moreover, we have
the following stability estimates \bq \label{ortogonal} \|\nabla
u_n\|_{L^2}^2=\Sum_{j=1}^{\ell}\,\|{\psi^{(j)}}'\|_{L^2}^2+\|\nabla
{\rm r}_n^{(\ell)}\|_{L^2}^2+\circ(1),\quad n\to\infty. \eq

\end{thm}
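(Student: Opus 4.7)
My plan is to imitate the abstract concentration-compactness / profile decomposition scheme of Gérard, but adapted to the radial 2D Orlicz setting via the logarithmic change of variable that converts everything into a one-dimensional problem.

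First, I would perform the reduction $s=-\log|x|$, $v_n(s):=u_n(e^{-s})$, so that by \eqref{prel1}--\eqref{prel3} the hypotheses become: $(v_n')$ bounded in $L^2(\R)$, $(v_n)$ bounded in $L^2(\R,e^{-2s}ds)$, $v_n\to 0$ weakly, vanishing as $s\to-\infty$ in the sense inherited from \eqref{main-assum3}, and the Orlicz norm controlled by the 1D weighted integral. In these coordinates the model profile is $\tilde f_\alpha(s)=\sqrt{\alpha/2\pi}\,\mathbf L(s/\alpha)$, so one is naturally led to look for scales $\alpha_n\to\infty$ and one-variable limits of the rescaled sequences
$$\tilde v_n^{(\alpha_n)}(y):=\sqrt{\frac{2\pi}{\alpha_n}}\,v_n(\alpha_n y),$$
whose derivatives have $L^2$-norm exactly equal to $\|\nabla u_n\|_{L^2}$.

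Second, to extract the first scale and profile, I would apply Proposition \ref{generalconcent} to $(u_n)$: since $\limsup\|u_n\|_{\cL}=A_0>0$ and compactness at infinity holds, $|\nabla u_n|^2$ concentrates, and via a careful analysis of where the concentration happens on the logarithmic scale (tracking the first point at which a fixed fraction, say $A_0/2$, of the gradient mass has accumulated) I obtain a scale $\underline\alpha^{(1)}=(\alpha_n^{(1)})$ with $\alpha_n^{(1)}\to\infty$. The sequence $\tilde v_n^{(\alpha_n^{(1)})}$ is bounded in $\dot H^1(\R)$; after extraction it converges weakly in $H^1_{\mathrm{loc}}$ and uniformly on compact sets (Sobolev embedding in 1D) to some $\psi^{(1)}\in\cP$; the vanishing on $(-\infty,0]$ comes from the compactness at infinity \eqref{main-assum3}. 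Then set $g_n^{(1)}(x):=\sqrt{\alpha_n^{(1)}/2\pi}\,\psi^{(1)}(-\log|x|/\alpha_n^{(1)})$ and define the remainder $r_n^{(1)}:=u_n-g_n^{(1)}$. A direct computation based on weak convergence of $\tilde v_n^{(\alpha_n^{(1)})'}\rightharpoonup\psi^{(1)'}$ in $L^2$ yields the Pythagorean identity $\|\nabla u_n\|_{L^2}^2=\|\psi^{(1)'}\|_{L^2}^2+\|\nabla r_n^{(1)}\|_{L^2}^2+o(1)$.

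Third, I iterate. If $\limsup\|r_n^{(1)}\|_{\cL}>0$ I apply the same extraction to $r_n^{(1)}$, producing a scale $\underline\alpha^{(2)}$; orthogonality $\underline\alpha^{(1)}\perp\underline\alpha^{(2)}$ must hold, because if $|\log(\alpha_n^{(2)}/\alpha_n^{(1)})|$ stayed bounded then the rescaled remainder at scale $\alpha_n^{(1)}$ would still carry the new profile, contradicting the definition of $\psi^{(1)}$ as the weak limit (the construction of $\psi^{(1)}$ extracts all the mass living at the scale $\alpha_n^{(1)}$ up to negative-translation invariance, cf.\ remark \textbf{b)} after Definition~\ref{Profi}). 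Iterating and using the energy orthogonality at each step yields, for every $\ell$,
$$\|\nabla u_n\|_{L^2}^2=\sum_{j=1}^{\ell}\|\psi^{(j)'}\|_{L^2}^2+\|\nabla r_n^{(\ell)}\|_{L^2}^2+o(1).$$

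Finally, the main obstacle is proving $\limsup_n\|r_n^{(\ell)}\|_{\cL}\to 0$ as $\ell\to\infty$. Here I use the quantitative lower bound in Proposition~\ref{single}: at each extraction step the new profile satisfies
$$\sup_{s>0}\frac{|\psi^{(j)}(s)|}{\sqrt s}\ \geq\ \sqrt{4\pi}\,\eta_j,\qquad \eta_j:=\limsup_n\|r_n^{(j-1)}\|_{\cL}.$$
Combined with the elementary inequality $|\psi(s)|/\sqrt s\leq\|\psi'\|_{L^2}$ (Cauchy--Schwarz on $\psi(s)=\int_0^s\psi'$), this gives $\|\psi^{(j)'}\|_{L^2}^2\geq 4\pi\,\eta_j^2$. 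Summability of $\sum\|\psi^{(j)'}\|_{L^2}^2$ (from the bounded $\dot H^1$ norm and energy orthogonality) forces $\eta_j\to 0$. To pass from $\eta_j\to 0$ to $\limsup_n\|r_n^{(\ell)}\|_{\cL}\to 0$, one still has to check that the extraction procedure cannot ``miss'' a substantial Orlicz mass living at a completely different scale; this is handled by refining the choice of $\alpha_n^{(j)}$ (choosing at each stage the scale that captures essentially the maximal Orlicz concentration of the current remainder), which is the delicate bookkeeping step and the true technical heart of the argument.
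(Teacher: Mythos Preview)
Your overall architecture matches the paper's: logarithmic change of variables, rescaled 1D sequences $\tilde v_n^{(\alpha_n)}$, weak $\dot H^1$ limits as profiles, Pythagorean energy identity from weak convergence, and iteration. But the step that actually drives the whole argument --- the quantitative lower bound $\|(\psi^{(j)})'\|_{L^2}\geq c\,A_{j-1}$ with $A_{j-1}=\limsup_n\|r_n^{(j-1)}\|_{\cL}$ --- is not established in your proposal, and the tool you invoke for it is used in the wrong direction.

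Proposition~\ref{single} says, for a \emph{single} profile sequence $g_n$,
\[
\frac{1}{\sqrt{4\pi}}\sup_{s>0}\frac{|\psi(s)|}{\sqrt s}\ \le\ \liminf_n\|g_n\|_{\cL},
\]
i.e.\ it bounds the Orlicz norm \emph{from below} by the sup. You need the reverse implication: from $\limsup_n\|r_n^{(j-1)}\|_{\cL}=\eta_j$ deduce a \emph{lower} bound on $\sup_s|\psi^{(j)}(s)|/\sqrt s$. That does not follow from Proposition~\ref{single}, and it cannot, because $r_n^{(j-1)}$ is not a single-profile sequence. Likewise, Proposition~\ref{generalconcent} only says gradient mass concentrates at the origin in physical space (equivalently at $s\to+\infty$); it gives no information on \emph{which} logarithmic rate $\alpha_n$ to use, so ``tracking the first point at which a fixed fraction of the gradient mass has accumulated'' does not, by itself, guarantee that the resulting weak limit $\psi^{(j)}$ is nontrivial, let alone of size $\gtrsim\eta_j$.

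The paper closes exactly this gap with a different, Orlicz-specific mechanism (its Lemma~\ref{heart}, proved via Proposition~\ref{step1}): one shows by a dominated-convergence argument on the Orlicz integral that
\[
\sup_{s\ge0}\Big(\frac{|v_n(s)|^2}{(A_0-\delta)^2}-s\Big)\longrightarrow\infty,
\]
then \emph{defines} $\alpha_n^{(1)}$ to be a near-maximizer of this quantity. This forces $|v_n(\alpha_n^{(1)})|\ge \tfrac{A_0}{2}\sqrt{\alpha_n^{(1)}}$, hence $|\psi_n(1)|\ge \tfrac{\sqrt{2\pi}}{2}A_0$, and the weak limit satisfies $|\psi^{(1)}(1)|\ge \tfrac{\sqrt{2\pi}}{2}A_0$, whence $\|(\psi^{(1)})'\|_{L^2}\ge \tfrac{\sqrt{2\pi}}{2}A_0$ by Cauchy--Schwarz. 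So the scale is selected by the \emph{pointwise} size of $v_n$ relative to $\sqrt s$ (which is precisely what controls the Orlicz integrand), not by gradient concentration. You correctly sense in your last paragraph that the scale must be chosen to ``capture the maximal Orlicz concentration'', but you have not supplied the argument; Proposition~\ref{step1} is that argument, and nothing in your outline substitutes for it.

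A minor point: once you have $\|(\psi^{(j)})'\|_{L^2}\ge c\,A_{j-1}$, the conclusion $A_\ell\to0$ is immediate from energy summability, and since $A_\ell=\limsup_n\|r_n^{(\ell)}\|_{\cL}$ by definition, there is no separate passage ``from $\eta_j\to0$ to $\limsup_n\|r_n^{(\ell)}\|_{\cL}\to0$'' to worry about.
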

\begin{rems}\quad\\
{\bf a)} As in higher dimensions, the decomposition \eqref{decomp}
is not unique (see \cite{Ge2}).\\ {\bf b)} The assumption
\eqref{main-assum3} means that there is no lack of compactness at
infinity. It is in particularly satisfied when the
sequence~$(u_n)_{n\in \N}$  is supported in a fixed compact
of~$\R^2$ and also by $g_\alpha$. \\ {\bf c)} Also, this
assumption implies the condition $\psi_{|]-\infty,0]}=0$ included
in the definition of the set of profiles. Indeed, first let us
observe that under Condition \eqref{main-assum3}, necessarily each
element $ g_n^{(j)}(x):=
\sqrt{\frac{\alpha_n^{(j)}}{2\pi}}\;\psi^{(j)}\left(\frac{-\log|x|}{\alpha_n^{(j)}}\right)$
of Decomposition \eqref{decomp} does not display lack of
compactness at infinity.  The problem is then reduced to prove
that if a sequence $g_n=
\sqrt{\frac{\alpha_n}{2\pi}}\;\psi\left(\frac{-\log|x|}{\alpha_n}\right)$,
where $(\alpha_n)$ is any scale and $\psi\in L^2(\R,{\rm
e}^{-2s}ds)$ with $\psi'\in L^2(\R)$, satisfies Hypothesis
\eqref{main-assum3}  then consequently $\psi_{|]-\infty,0]}=0$.
Let us then consider a sequence $g_n$ satisfying the above
assumptions.
 This yields
$$
\lim_{R\to\infty}\;
\limsup_{n\to\infty}\,\Big(\alpha_n^2\,\int_{-\infty}^{-\frac{\log
R}{\alpha_n}}\,|\psi(t)|^2\,{\rm e}^{-2\alpha_n\,t}\,dt\Big)=0\,.
$$
Now, if $\psi(t_0)\neq 0$ for some $t_0<0$ then by continuity, we
get $|\psi(t)|\gtrsim 1$ for $t_0-\eta\leq t\leq t_0+\eta<0$.
Hence, for $n$ large enough,
$$
\alpha_n^2\,\int_{-\infty}^{-\frac{\log
R}{\alpha_n}}\,|\psi(t)|^2\,{\rm e}^{-2\alpha_n\,t}\,dt\gtrsim
\frac{\alpha_n}{2}\,\Big({\rm e}^{-2\alpha_n\,(t_0-\eta)}-{\rm
e}^{-2\alpha_n\,(t_0+\eta)}\Big),
$$
which leads easily to the desired result.\\
{\bf d)} Compared with the decomposition in \cite{Ge2}, it can be seen that there's no core in \eqref{decomp}. This is justified by the radial setting.\\
{\bf e)} The description of the lack of compactness of the embedding of $H^1(\R^2)$ into Orlicz space in the general frame is much harder than the radial setting. This will be dealt with in a forthcoming paper.\\
{\bf f)} Let us mention that M. Struwe in \cite{Struwe88} studied the loss of compactness for the functional
$$
E(u)=\frac{1}{|\Omega|}\,\Int_{\Omega}\;{\rm e}^{4\pi |u|^2}\,dx,
$$
where $\Omega$ is a bounded domain in $\R^2$.
\end{rems}

It should be emphasized that, contrary to the case of
Sobolev embedding in Lebesgue spaces, where the asymptotic
decomposition derived by P. G\'erard in \cite{Ge2} leads to $$
\|u_n\|^p_{L^p} \to \sum_{j\geq 1} \|\psi^{(j)}\|^p_{L^p},$$ Theorem
\ref{main} induces to
\begin{equation}
\label{OrliczMax} \|u_n\|_{\cL}\to \sup_{j\geq
1}\,\left(\lim_{n\to\infty}\,\|g_n^{(j)}\|_{\cL}\right),
\end{equation}
thanks to the following proposition.
\begin{prop}
\label{sumOrlicz} Let $(\underline{\alpha}^{(j)})_{1\leq
j\leq\ell}$ be a family of pairwise orthogonal scales and
$(\psi^{(j)})_{1\leq j\leq\ell}$ be a family of profiles, and set
$$
g_n(x)=\Sum_{j=1}^{\ell}\,\sqrt{\frac{\alpha_n^{(j)}}{2\pi}}\;\psi^{(j)}\left(\frac{-\log|x|}{\alpha_n^{(j)}}\right):=\Sum_{j=1}^{\ell}\,g_n^{(j)}(x)\;.
$$ Then
\begin{equation}
\label{OrliczMax1} \|g_n\|_{\cL}\to \sup_{1\leq
j\leq\ell}\,\left(\lim_{n\to\infty}\,\|g_n^{(j)}\|_{\cL}\right)\;.
\end{equation}
\end{prop}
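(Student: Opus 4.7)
The plan is to pass to the one-variable radial picture via $s=-\log|x|$, writing $v_n^{(j)}(s) := \sqrt{\alpha_n^{(j)}/(2\pi)}\,\psi^{(j)}(s/\alpha_n^{(j)})$ and $v_n := \sum_{j=1}^{\ell}v_n^{(j)}$, so that
\[
\int_{\R^2}\bigl(e^{|g_n(x)|^2/\lambda^2}-1\bigr)\,dx = 2\pi\int_0^\infty\bigl(e^{|v_n(s)|^2/\lambda^2}-1\bigr)e^{-2s}\,ds.
\]
The scale-invariant ratio $u_j(t):=|\psi^{(j)}(t)|^2/(4\pi t)$ is pointwise bounded by $L_j^2:=\sup_{t>0}u_j(t)$ and satisfies $u_j(t)\to 0$ both as $t\to 0^+$ (from the argument in the proof of Proposition \ref{single}) and as $t\to\infty$, since $\psi^{(j)'}\in L^2(\R)$ yields $|\psi^{(j)}(t)|^2 \le 2|\psi^{(j)}(T)|^2 + 2t\int_T^\infty|\psi^{(j)'}|^2$ for every $T>0$. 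As a preliminary, I would refine Proposition \ref{single} into the identity $\lim_n \|g_n^{(j)}\|_{\cL} = L_j$ by a Laplace-type estimate on the transformed integral $2\pi\alpha_n^{(j)}\int_0^\infty e^{-2\alpha_n^{(j)}(t - u_j(t)/\lambda^2)}\,dt-\pi$, which for $\lambda>L_j$ has strictly positive exponent linear at $0$, hence tends to $0$.

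For the upper bound $\limsup_n\|g_n\|_{\cL}\le L:=\max_j L_j$, order the scales $\alpha_n^{(1)}\ll\cdots\ll\alpha_n^{(\ell)}$ and partition $[0,\infty)$ into windows $I_n^{(j)}:=[\sqrt{\alpha_n^{(j-1)}\alpha_n^{(j)}}\,,\,\sqrt{\alpha_n^{(j)}\alpha_n^{(j+1)}}\,]$ with the natural conventions at the endpoints. On $I_n^{(j)}$ the ratio $s/\alpha_n^{(k)}$ tends to $\infty$ for $k<j$ and to $0$ for $k>j$, both uniformly by orthogonality of scales, so by the two-sided decay of $u_k$ we get $u_k(s/\alpha_n^{(k)})\to 0$ uniformly on $I_n^{(j)}$ for every $k\ne j$. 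An $\varepsilon$-Cauchy--Schwarz applied to $|v_n|^2=|v_n^{(j)}+\sum_{k\ne j}v_n^{(k)}|^2$ then gives
\[
|v_n(s)|^2/(2s)\le (1+\varepsilon)L_j^2 + C_\varepsilon\sum_{k\ne j}u_k(s/\alpha_n^{(k)}) = (1+\varepsilon)L^2+o(1)
\]
uniformly on each window. Taking $\lambda > L$ and $\varepsilon$ small, one obtains an integrable upper bound $e^{|v_n|^2/\lambda^2-2s}\le e^{-2\delta s}$ with $\delta>0$ uniformly in $s$ and large $n$; combined with pointwise convergence of the integrand to $0$ at each fixed $s$, dominated convergence forces the Orlicz integral to vanish, so $\|g_n\|_{\cL}\le\lambda$ for large $n$.

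For the lower bound $\liminf_n\|g_n\|_{\cL}\ge L$, choose $j_0$ with $L_{j_0}=L$ and $t^*>0$ with $u_{j_0}(t^*)>L^2-\varepsilon$, and work on the window $\cJ_n:=[(1-\eta)s_n^*,(1+\eta)s_n^*]$ centered at $s_n^*:=t^*\alpha_n^{(j_0)}$. Continuity of $\psi^{(j_0)}$ gives $u_{j_0}(s/\alpha_n^{(j_0)})\ge L^2-2\varepsilon$ on $\cJ_n$ for $\eta$ small, while the same orthogonality argument forces $u_k(s/\alpha_n^{(k)})\to 0$ uniformly on $\cJ_n$ for every $k\ne j_0$. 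The reverse triangle inequality combined with these bounds yields $|v_n(s)|^2/(2s)\ge L^2-3\varepsilon$ on $\cJ_n$ for large $n$. For any $\lambda^2<L^2-3\varepsilon$, the integrand $e^{|v_n|^2/\lambda^2-2s}$ is then bounded below by $e^{2\theta s}$ on $\cJ_n$ for some $\theta>0$, and since $|\cJ_n|\sim\alpha_n^{(j_0)}\to\infty$ the Orlicz integral diverges, forcing $\|g_n\|_{\cL}>\lambda$ eventually. The main technical hurdle is the uniform off-resonance control of the cross-terms, which rests precisely on the two-sided decay of $u_k(t)$ that follows from $\psi^{(k)}\in\cP$.
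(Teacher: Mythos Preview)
Your argument is correct and takes a genuinely different route from the paper's. The paper works in the $x$-variable on the representative example $h_\alpha=a\,f_\alpha+b\,f_{\alpha^2}$, expands ${\rm e}^{A+B+C}-1$ into products, controls the diagonal pieces via the Trudinger--Moser inequality \eqref{Mos1}, and handles the cross terms with H\"older together with the computational Lemma~\ref{p,q}; the general case is then declared ``similar'' via the analogues \eqref{inf3}--\eqref{inf4}. You instead pass to the $s$-variable, isolate the scale-invariant ratio $u_j(t)=|\psi^{(j)}(t)|^2/(4\pi t)$, exploit its two-sided decay (the decay at $+\infty$, coming from $\psi^{(j)\prime}\in L^2$, is the only place where the profile hypothesis really enters), partition $[0,\infty)$ into windows at the geometric means of consecutive scales, and close by dominated convergence and a Laplace-type lower bound. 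Two advantages of your approach: it treats arbitrary profiles $\psi^{(j)}\in\cP$ directly rather than reducing to the Lions model, and along the way it yields the sharp single-profile identity $\lim_n\|g_n^{(j)}\|_{\cL}=\sup_{t>0}|\psi^{(j)}(t)|/\sqrt{4\pi t}$, which strengthens the two-sided estimate of Proposition~\ref{single} (for the Lions profile the two bounds there coincide, so the paper's example does not expose this gap). The paper's method, on the other hand, stays closer to the functional-inequality machinery used elsewhere and avoids your explicit window decomposition. One small remark: when you order the scales as $\alpha_n^{(1)}\ll\cdots\ll\alpha_n^{(\ell)}$ you are implicitly passing to a subsequence, which is harmless here since the limit $L=\max_j L_j$ is subsequence-independent.
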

A consequence of this proposition is that the first profile in the decomposition \eqref{decomp} can be chosen such that up to extraction
$$ \limsup_{n\to\infty}\|u_n\|_{\mathcal L}=A_0 = \lim_{n\to\infty}\left\|  \sqrt{\frac{\alpha_n^{(1)}}{2\pi}}\;\psi^{(1)}\left(\frac{-\log|x|}{\alpha_n^{(1)}}\right)  \right\|_{\mathcal L} \,.$$


\subsection{Structure of the paper} Our paper is organized as follows: we first describe in Section~\ref{secmain}
the algorithmic construction of the decomposition of a bounded
sequence~$(u_n)_{n\in \N}$  in $H^1_{rad}(\R^2)$, up to a
subsequence extraction, in terms of asymptotically orthogonal
profiles in the spirit of the
Lions'examples~$\sqrt{\frac{\alpha}{2\pi}}\;\psi(\frac{-\log|x|}{\alpha})$, and then prove Proposition~\ref{sumOrlicz}.
 Section~\ref{Wave}
is devoted to the study of nonlinear wave equations with
exponential growth, both in the sub-critical and critical cases.
The purpose is then to investigate the influence of the nonlinear
term on the main features of solutions of nonlinear wave equations
by comparing their evolution with the evolution of the solutions
of the Klein-Gordon equation. Finally, we deal in appendix with
several complements for the sake of completeness.

Finally, we mention that, $C$ will be used to denote a constant
which may vary from line to line. We also use $A\lesssim B$ to
denote an estimate of the form $A\leq C B$ for some absolute
constant $C$ and $A\approx B$ if $A\lesssim B$ and $B\lesssim A$.
For simplicity, we shall also still denote by $(u_n)$ any
subsequence of $(u_n)$.


\section{Extraction of scales and profiles}
\label{secmain}


This section is devoted to the proofs of Theorem \ref{main} and
Proposition \ref{sumOrlicz}. Our approach to extract scales and
profiles relies on a diagonal subsequence extraction and uses in a
crucial way the radial setting and particularly the fact that we
deal with bounded functions far away from the origin. The heart of
the matter is reduced to the proof of the following lemma.
\begin{lem}
\label{heart} Let $(u_n)$ be a sequence in $H^1_{rad}(\R^2)$
satisfying the assumptions of the Theorem \ref{main}. Then there
exists a scale $(\alpha_n)$ and a profile $\psi$ such that
\begin{equation}
\label{hearteq}
 \|\psi'\|_{L^2}\geq C\,A_0,
 \end{equation}
where $C$ is a universal constant.
 \end{lem}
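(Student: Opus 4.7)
The strategy is to work with the one-dimensional substitution $v_n(s):=u_n(e^{-s})$ provided by identities \eqref{prel1}--\eqref{prel3}, and to extract $\alpha_n\to\infty$ by localizing where the Orlicz mass sits. Two preliminary facts will be needed. First, since $u_n\rightharpoonup 0$ in $H^1_{rad}$, combined with hypothesis \eqref{main-assum3} and the Rellich theorem on each ball, one gets $u_n\to 0$ strongly in $L^2(\R^2)$; hence $\int_\R|v_n|^2 e^{-2s}\,ds\to 0$, while $\int_\R|v_n'|^2\,ds$ remains bounded. Second, for $s\leq 0$ (i.e.\ $|x|\geq 1$), the radial Sobolev bound $|u(r)|\lesssim r^{-1/2}\|u\|_{H^1}$ gives $|v_n(s)|^2\lesssim e^s$, and the elementary inequality $e^x-1\leq xe^x$ then yields $\int_{-\infty}^0(e^{|v_n|^2/\lambda^2}-1)e^{-2s}\,ds\to 0$ for every fixed $\lambda>0$. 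Finally, on each bounded interval $[0,T]$, the 1D compact embedding $H^1\hookrightarrow C^0$ gives $v_n\to 0$ uniformly, so $\int_0^T(e^{|v_n|^2/\lambda^2}-1)e^{-2s}\,ds\to 0$ as well.

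\textbf{Scale selection.} Fix $\lambda<A_0$. Along the subsequence where $\|u_n\|_{\cL}\to A_0$, the definition of the Orlicz norm together with \eqref{prel3} gives $\int_\R(e^{|v_n|^2/\lambda^2}-1)e^{-2s}\,ds>\kappa/(2\pi)$ eventually. A diagonal extraction, using the two vanishing estimates above, produces a sequence $T_n\to\infty$ with
$$\int_{T_n}^\infty\bigl(e^{|v_n|^2/\lambda^2}-1\bigr)e^{-2s}\,ds\geq \frac{\kappa}{4\pi}.$$
Setting $M_n:=\sup_{s\geq T_n}|v_n(s)|^2/s$ (finite by Cauchy--Schwarz on $v_n'$), the pointwise bound $|v_n(s)|^2\leq M_n s$ yields $\int_{T_n}^\infty(e^{M_n s/\lambda^2}-1)e^{-2s}\,ds\geq \kappa/(4\pi)$, and an explicit evaluation of this exponential integral forces $\limsup_n M_n\geq 2\lambda^2$: indeed, if $M_n\leq 2\lambda^2-\varepsilon$ for a fixed $\varepsilon>0$, the integral would decay like $e^{-\varepsilon T_n}\to 0$, a contradiction. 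I may therefore choose $s_n\geq T_n$ with $|v_n(s_n)|^2/s_n\geq \lambda^2$, and set $\alpha_n:=s_n$, which tends to $+\infty$.

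\textbf{Profile construction.} Define $\psi_n(t):=\sqrt{2\pi/\alpha_n}\,v_n(\alpha_n t)$. The change of variable gives $\|\psi_n'\|_{L^2(\R)}=\|\nabla u_n\|_{L^2(\R^2)}$, uniformly bounded, so the family $(\psi_n)$ is equicontinuous (by Cauchy--Schwarz, $1/2$-H\"older). The radial Sobolev estimate further gives $|\psi_n(t)|\leq C\sqrt{2\pi/\alpha_n}\,e^{\alpha_n t/2}\to 0$ uniformly on $(-\infty,0]$. Arzel\`a--Ascoli produces, after extraction, a continuous limit $\psi$ with $\psi\equiv 0$ on $(-\infty,0]$; weak $L^2$ compactness of $(\psi_n')$ gives $\psi'\in L^2(\R)$, and $\psi(0)=0$ together with this $L^2$ bound imply $\psi\in L^2(\R,e^{-2s}ds)$, so $\psi\in\cP$. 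Moreover $|\psi_n(1)|^2=(2\pi/\alpha_n)|v_n(s_n)|^2\geq 2\pi\lambda^2$, so $|\psi(1)|\geq\sqrt{2\pi}\,\lambda$. Since $\psi(0)=0$, Cauchy--Schwarz gives $|\psi(1)|\leq \|\psi'\|_{L^2(0,1)}\leq \|\psi'\|_{L^2(\R)}$; taking $\lambda=A_0/2$ I conclude $\|\psi'\|_{L^2}\geq (\sqrt{2\pi}/2)A_0$, which is \eqref{hearteq}.

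\textbf{Main obstacle.} The delicate step is the scale extraction: the global supremum of $|v_n|^2/s$ over $s>0$ may be attained at bounded or vanishing $s$ (think of sequences of the form $f_n+f_{1/n}$, where one scale diverges and the other collapses), so one cannot simply take $s_n$ to be an argmax. The device of the diagonal $T_n\to\infty$ together with the explicit exponential-integral computation is precisely the mechanism that forces the useful Orlicz mass to live at diverging scales, guaranteeing $\alpha_n\to\infty$.
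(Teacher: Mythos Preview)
Your proof is correct and follows essentially the paper's route: the same one-variable substitution $v_n$, the same mechanism of locating $\alpha_n\to\infty$ with $|v_n(\alpha_n)|^2/\alpha_n\gtrsim A_0^2$, and the same profile extraction via $\psi_n(t)=\sqrt{2\pi/\alpha_n}\,v_n(\alpha_n t)$ and weak $L^2$-compactness of $\psi_n'$, yielding the identical constant $\sqrt{2\pi}/2$. The only difference is cosmetic---where the paper takes $\alpha_n$ as a near-argmax of $W_n(s)=4|v_n(s)/A_0|^2-s$ (after showing $\sup_{s\geq 0} W_n\to\infty$ via dominated convergence), you instead run a diagonal $T_n\to\infty$ together with an explicit exponential-integral bound; and one small imprecision to tighten: your estimate on the region $s\leq 0$ tacitly uses the strong convergence $u_n\to 0$ in $L^2$ (which you did record) in addition to the pointwise radial bound, since the latter alone only gives a uniform control rather than decay of $\int_{-\infty}^0|v_n|^2 e^{-2s}\,ds$.
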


Roughly speaking, the proof  is done in three steps. In the first
step, according to Lemma \ref{heart}, we extract  the first scale
and the first profile satisfying the condition \eqref{hearteq}. This
reduces the problem to the study of the remainder term. If the limit of its Orlicz norm is null we stop the process. If not, we prove
that this remainder term satisfies the same properties as the
sequence start which allow us to apply the lines of reasoning
 of the first step and extract a second  scale and a second
profile which verifies the above key property \eqref{hearteq}. By
contradiction arguments, we get the property of orthogonality
between the two first scales. Finally, we prove that this process converges.




\subsection{Extraction of the first scale and the first profile}

Let us consider a bounded sequence $(u_n)$ in $H^1_{rad}(\R^2)$
satisfying hypothesis
\eqref{main-assum1}-\eqref{main-assum2}-\eqref{main-assum3} and let
us set $v_n(s)=u_n({\rm e}^{-s})$. The following lemma summarize
some properties of the sequence $(u_n)$ that will be useful to
implement the proof strategy.
\begin{lem}
\label{basicproper} Under the above assumptions and up to a
subsequence extraction, the  sequence $(u_n)$  converges strongly to
$0$ in $L^2$ and we have, for any $M\in\R$, \bq \label{bound}
\|v_n\|_{L^\infty(]-\infty, M[)}\to 0,\qquad n\to \infty. \eq

\end{lem}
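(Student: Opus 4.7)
\medskip

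\noindent\textbf{Proof plan for Lemma \ref{basicproper}.}

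\medskip

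The plan is to establish the two assertions separately by elementary means, exploiting the radial nature of the sequence only in the second step.

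\medskip

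\emph{Strong $L^2$ convergence.} Since $(u_n)$ is bounded in $H^1(\R^2)$ and converges weakly to $0$, the Rellich--Kondrachov theorem applied on each ball $B(0,R)$ yields, up to extraction of a diagonal subsequence, the strong convergence
\[
\int_{|x|\leq R}\,|u_n(x)|^2\,dx\longrightarrow 0,\qquad n\to\infty,
\]
for every fixed $R>0$. Combining this with the tightness hypothesis \eqref{main-assum3}, a standard $\varepsilon/2$ argument (fix $R$ so that $\limsup_n\int_{|x|>R}|u_n|^2\,dx<\varepsilon/2$, then let $n\to\infty$ to absorb the bounded part) gives $\|u_n\|_{L^2(\R^2)}\to 0$.

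\medskip

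\emph{Uniform decay on $\{s<M\}$.} Recall the classical pointwise estimate for radial $H^1$ functions in the plane: if $u\in H^1_{rad}(\R^2)$, the identity
\[
u(r)^2=-2\int_r^{\I}\,u(s)\,u'(s)\,ds
\]
and the inequality $r\leq s$ on the interval of integration yield
\[
r\,|u(r)|^2\leq \frac{1}{\pi}\int_{|x|\geq r}\,|u(x)|\,|\nabla u(x)|\,dx\leq \frac{1}{\pi}\,\|u\|_{L^2}\,\|\nabla u\|_{L^2},
\]
hence
\[
|u(x)|\leq \frac{C}{\sqrt{|x|}}\,\|u\|_{L^2}^{1/2}\,\|\nabla u\|_{L^2}^{1/2}.
\]
Applying this to $u_n$, using that $(\|\nabla u_n\|_{L^2})$ is bounded by the $H^1$-boundedness of $(u_n)$ and that $\|u_n\|_{L^2}\to 0$ by the first step, we deduce
\[
\sup_{|x|\geq e^{-M}}\,|u_n(x)|\leq C\,e^{M/2}\,\|u_n\|_{L^2}^{1/2}\,\|\nabla u_n\|_{L^2}^{1/2}\longrightarrow 0.
\]
Translating via $v_n(s)=u_n(e^{-s})$, this is exactly \eqref{bound}, since $\{s<M\}$ corresponds to $\{|x|>e^{-M}\}$ (any boundary issue being harmless).

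\medskip

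Neither step presents a serious obstacle: the first is a routine consequence of local compactness plus the assumption that no mass escapes at infinity, while the second is the standard radial Strauss-type estimate, whose role here is to upgrade the $L^2$ decay to a uniform decay away from the origin. The radial assumption is indispensable only for this second part; without it one would only have $L^2_{loc}$ information. Observe also that the estimate $|u_n(x)|\lesssim |x|^{-1/2}$ degenerates as $|x|\to 0$, which is precisely why the statement is restricted to $s<M$ (i.e.\ $|x|$ bounded away from zero) and why the concentration phenomenon described by the Lions example can survive near the origin.
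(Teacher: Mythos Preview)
Your proof is correct and follows essentially the same approach as the paper: local $L^2$ compactness via Rellich plus the tightness assumption \eqref{main-assum3} for the first part, and the radial Strauss-type pointwise estimate (which the paper records as Lemma~\ref{apa3}) for the second. The only difference is cosmetic --- you derive the radial bound inline rather than citing it --- and your closing commentary on why the restriction to $s<M$ is natural is a nice addition.
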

\begin{proof}
 Let us first observe that for any $R>0$, we have
$$ \|u_n\|_{L^2}=\|u_n\|_{L^2(|x|\leq R)}+\|u_n\|_{L^2(|x|> R)}\,.
$$ Now, by virtue of Rellich's theorem, the Sobolev space
$H^1(|x|\leq R)$ is compactly embedded in $L^2(|x|\leq  R)$.
Therefore, $$ \limsup_{n\to\infty}\,\|u_n\|_{L^2}\leq
\limsup_{n\to\infty}\|u_n\|_{L^2(|x|\geq R)}. $$ Taking advantage
of the compactness at infinity of the sequence, we deduce  the
strong convergence of the sequence $(u_n)$  to zero in $L^2$.\\ On
the other hand, Property \eqref{bound} derives immediately
from the boundedness of $(u_n)$ in $H^1$, the strong convergence
to zero of $(u_n)$ in $L^2$ and the following well known radial
estimate recalled in Lemma \ref{apa3} $$ |u(r)|\leq
\frac{C}{\sqrt{r}}\,\|u\|_{L^2}^{\frac{1}{2}}\|\nabla
u\|_{L^2}^{\frac{1}{2}}. $$
\end{proof}
The first step is devoted to the determination of the first scale
and the first profile.
\begin{prop}
\label{step1} For any $\de>0$, we have
\begin{equation}
\label{depart} \sup_{s\geq
0}\left(\Big|\frac{v_n(s)}{A_0-\de}\Big|^2-s\right)\to\infty,\quad
n\to\infty.
\end{equation}
\end{prop}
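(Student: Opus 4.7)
I would argue by contradiction. Suppose the conclusion fails; then, after extracting a subsequence (which the paper's convention allows me to still denote $(u_n)$), there exist $\delta>0$ and $M<\infty$ with
\[
\sup_{s\geq 0}\Big(\Big|\tfrac{v_n(s)}{A_0-\delta}\Big|^2-s\Big)\leq M\qquad\text{for every } n.
\]
Using \eqref{main-assum2}, I may further assume along a sub-subsequence that $\|u_n\|_{\mathcal L}\to A_0$. The plan is to exhibit a fixed $\lambda<A_0$ such that $\|u_n\|_{\mathcal L}\leq\lambda$ for all large $n$, which contradicts $\|u_n\|_{\mathcal L}\to A_0$. By the defining property \eqref{norm1} of the Orlicz norm, it suffices to show $\int_{\R^2}(e^{|u_n/\lambda|^2}-1)\,dx\leq\kappa$ for $n$ large.

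I would pick $\lambda\in(A_0-\delta,\,A_0)$, so that $q:=(A_0-\delta)^2/\lambda^2\in(0,1)$. The change of variable \eqref{prel3} rewrites the Orlicz integral as $2\pi\int_{\R}(e^{|v_n(s)/\lambda|^2}-1)e^{-2s}\,ds$, and I split the $s$-range into the three pieces $(-\infty,0]$, $[0,s_0]$ and $[s_0,\infty)$, where $s_0>0$ is a parameter to be chosen last. On $[s_0,\infty)$ the standing bound yields $|v_n/\lambda|^2\leq q(s+M)$, so
\[
\int_{s_0}^{\infty}\big(e^{|v_n/\lambda|^2}-1\big)e^{-2s}\,ds \;\leq\; \frac{e^{qM}}{2-q}\,e^{(q-2)s_0},
\]
which tends to $0$ as $s_0\to\infty$ \emph{uniformly in} $n$, precisely because $q<1<2$. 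On $(-\infty,0]$, Lemma \ref{basicproper} gives $\|v_n\|_{L^\infty((-\infty,0])}\to 0$, so I would linearise the exponential and bound this piece by $(2/\lambda^2)\int_{-\infty}^0|v_n(s)|^2e^{-2s}\,ds$, which via \eqref{prel1} equals $(1/(\pi\lambda^2))\int_{|x|\geq 1}|u_n|^2\,dx$ and hence tends to $0$ by the $L^2$-convergence of $(u_n)$ also stated in Lemma \ref{basicproper}. On the bounded segment $[0,s_0]$, the same uniform-convergence statement (valid on $(-\infty,M')$ for every finite $M'$) makes the integrand tend to $0$ uniformly, so this piece vanishes in the limit for each fixed $s_0$.

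Putting these together, I would first choose $s_0$ large enough that the third piece contributes at most $\kappa/(4\pi)$, and then take $n$ large so that the first two pieces together contribute at most $\kappa/(4\pi)$. This yields $\int_{\R^2}(e^{|u_n/\lambda|^2}-1)\,dx\leq\kappa$, hence $\|u_n\|_{\mathcal L}\leq\lambda<A_0$, the desired contradiction. The crux is the tail estimate on $[s_0,\infty)$: the polynomial bound on $|v_n(s)|^2$ is borderline against the $e^{-2s}$ Jacobian, and the constraint $q<1$ (equivalently $\lambda>A_0-\delta$) is exactly what forces the integrand to decay so that the tail can be absorbed as $s_0\to\infty$; the margin $A_0-\delta<\lambda<A_0$, made available by the parameter $\delta>0$ in the statement, is what simultaneously keeps $\lambda$ strictly below $A_0$ and produces the contradiction.
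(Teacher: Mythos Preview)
Your argument is correct and follows the same contradiction strategy as the paper, using Lemma~\ref{basicproper} and the change of variable~\eqref{prel3} in the same way. The paper's execution is slightly more economical: it takes $\lambda=A_0-\delta$ directly and, on $\{s\geq 0\}$, invokes Lebesgue's dominated convergence theorem (the bound $|v_n(s)/(A_0-\delta)|^2\leq s+C$ gives the integrable majorant $e^{C}e^{-s}$, and $v_n(s)\to 0$ pointwise by~\eqref{bound}), rather than splitting $[0,\infty)$ at $s_0$ and introducing an auxiliary $\lambda\in(A_0-\delta,A_0)$. In particular, your emphasis on $q<1$ as ``the crux'' is misplaced: the tail integral $\int_{s_0}^\infty e^{q(s+M)-2s}\,ds$ already converges and vanishes as $s_0\to\infty$ for any $q<2$, so $q=1$ (i.e.\ $\lambda=A_0-\delta$) works just as well and yields the contradiction $\limsup_n\|u_n\|_{\mathcal L}\leq A_0-\delta$ directly.
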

\begin{proof}
We proceed by contradiction. If not, there exists $\de>0$ such that,
up to a subsequence  extraction
\begin{equation}
\label{dominate} \sup_{s\geq 0,
n\in\N}\;\;\left(\Big|\frac{v_n(s)}{A_0-\de}\Big|^2-s\right)\leq
C<\infty.
\end{equation}

In one hand, thanks to \eqref{bound} and \eqref{dominate}, we get by
virtue of Lebesgue theorem
$$
\int_{|x|<1}\;\left({\rm
e}^{|\frac{u_n(x)}{A_0-\de}|^2}-1\right)\,dx=2\pi\,\int_{0}^\infty\;\left({\rm
e}^{|\frac{v_n(s)}{A_0-\de}|^2}-1\right)\,{\rm e}^{-2s}\,ds\to
0,\quad n\to\infty.
$$
On the other hand, using Lemma \ref{basicproper} and the simple fact
that for any positive $M$, there exists a finite constant $C_M$ such
that
$$
\sup_{|t|\leq M}\,\left(\frac{{\rm e}^{t^2}-1}{t^2}\right)<C_M,
$$
we deduce that
$$
\int_{|x|\geq 1}\;\left({\rm
e}^{|\frac{u_n(x)}{A_0-\de}|^2}-1\right)\,dx\leq C\|u_n\|_{L^2}^2\to
0\,.
$$
This leads finally to
$$
\int_{\R^2}\;\left({\rm
e}^{|\frac{u_n(x)}{A_0-\de}|^2}-1\right)\,dx\to 0,\quad n\to\infty.
$$
Hence
$$
\Limsup_{n\to\infty}\,\|u_n\|_{\mathcal L}\leq A_0-\de,
$$
which is in contradiction with Hypothesis \eqref{main-assum2}.
\end{proof}

\begin{cor}
\label{alphan1} Let us fix $\de= {A_0}/2$, then up to a subsequence
extraction, there exists a sequence $(\alpha_n^{(1)})$ in $\R_+$
tending to infinity such that
\begin{equation}
\label{est1}
4\,\Big|\frac{v_n(\alpha_n^{(1)})}{A_0}\Big|^2-\alpha_n^{(1)}\to\infty
\end{equation}
\end{cor}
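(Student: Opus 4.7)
The plan is to specialize Proposition \ref{step1} to $\de = A_0/2$ and combine it with the uniform decay \eqref{bound} from Lemma \ref{basicproper}. I would first select points $\al_n^{(1)}\geq 0$ at which the supremum in Proposition \ref{step1} is essentially realized, and then rule out that these points remain bounded by invoking \eqref{bound}.

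For the extraction, note that with $\de = A_0/2$ one has $\big|\frac{v_n(s)}{A_0-\de}\big|^2 = 4\big|\frac{v_n(s)}{A_0}\big|^2$, so Proposition \ref{step1} directly yields
$$
\Sup_{s\geq 0}\,\left(4\Big|\frac{v_n(s)}{A_0}\Big|^2-s\right)\,\longrightarrow\,+\I,\qquad n\to\I.
$$
For each sufficiently large $n$ I would pick any $\al_n^{(1)}\geq 0$ with
$$
4\Big|\frac{v_n(\al_n^{(1)})}{A_0}\Big|^2-\al_n^{(1)}\,\geq\,n,
$$
which is possible by the previous display. This choice immediately produces \eqref{est1}.

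To prove $\al_n^{(1)}\to\I$, I would argue by contradiction: if $\al_n^{(1)}\leq M$ along a subsequence for some finite $M$, applying \eqref{bound} with any fixed $M'>M$ gives
$$
|v_n(\al_n^{(1)})|\,\leq\,\|v_n\|_{L^\I(]-\I,\,M'[)}\,\longrightarrow\,0,\qquad n\to\I,
$$
and then the left-hand side of the preceding inequality is bounded above by $\frac{4}{A_0^2}\|v_n\|_{L^\I(]-\I,M'[)}^2$, which tends to $0$. This contradicts the defining lower bound, so $\al_n^{(1)}\to\I$ (after a further extraction if needed).

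The main (minor) subtlety is that we do not know \emph{a priori} whether the supremum is attained for each $n$, since radial $H^1$ functions may concentrate at the origin and $|v_n(s)|$ need not decay as $s\to\I$. This is bypassed at no cost by selecting a sub-maximizer rather than an actual maximizer; no other obstacle arises, and the proof is essentially a packaging of Proposition \ref{step1} with the radial decay from Lemma \ref{basicproper}.
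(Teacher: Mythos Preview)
Your proposal is correct and follows essentially the same approach as the paper's own proof: both pick a near-maximizer of $W_n(s)=4|v_n(s)/A_0|^2-s$ (the paper takes $W_n(\alpha_n^{(1)})\geq a_n-1/n$, you take $W_n(\alpha_n^{(1)})\geq n$) and then rule out boundedness of $\alpha_n^{(1)}$ via \eqref{bound}. Your remark on using a sub-maximizer rather than an actual maximizer is exactly how the paper proceeds as well.
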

\begin{proof}
Let us set
$$
W_n(s)=4\,\Big|\frac{v_n(s)}{A_0}\Big|^2-s,\quad\quad
a_n=\sup_{s}\;W_n(s).
$$
Then, there exists $\alpha_n^{(1)}>0$ such that
$$
W_n(\alpha_n^{(1)})\geq a_n-\frac{1}{n}.
$$
In other respects under \eqref{depart}, $a_n\to\infty$ and then
$W_n(\alpha_n^{(1)})\to\infty$. It remains to prove that
$\alpha_n^{(1)}\to\infty$. If not, up to a subsequence  extraction,
the sequence $(\alpha_n^{(1)})$ is bounded and so is
$(W_n(\alpha_n^{(1)}))$ by \eqref{bound}. This completes the proof.
\end{proof}
An immediate consequence of the previous corollary is the following
result
\begin{cor}
\label{2.4} Under the above hypothesis, we have
$$
\frac{ A_0}{2}\sqrt{\alpha_n^{(1)}}\leq| v_n(\alpha_n^{(1)})|\leq C
\sqrt{\alpha_n^{(1)}}+\circ(1),
$$
with  $ C= (\limsup_{n\to\infty}\,\|\nabla\,u_n\|_{L^2} )/
\sqrt{2\pi}$ and where, as in all that follows,  $\circ(1)$ denotes
a sequence which tends to $ 0 $ as $n$ goes to infinity.
\end{cor}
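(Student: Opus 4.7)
The two-sided estimate splits into two independent bounds that I would treat in turn. For the lower bound, my plan is simply to read off the information already contained in Corollary~\ref{alphan1}: since $4|v_n(\alpha_n^{(1)})/A_0|^2-\alpha_n^{(1)}\to\infty$, for $n$ large enough this quantity is in particular nonnegative, which rearranges at once into $|v_n(\alpha_n^{(1)})|\ge (A_0/2)\sqrt{\alpha_n^{(1)}}$. No extra ingredient is needed for this side.

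For the upper bound, the strategy is to compare $v_n$ at the point $\alpha_n^{(1)}$ with its value at a fixed reference point, and then exploit the $H^1$-control on $v_n$ inherited from $u_n$ through the radial identity~\eqref{prel2}. Concretely, I would fix an arbitrary $M\in\R$ and note that $\alpha_n^{(1)}>M$ for $n$ large since $\alpha_n^{(1)}\to\infty$. The fundamental theorem of calculus then gives
$$v_n(\alpha_n^{(1)})=v_n(M)+\int_{M}^{\alpha_n^{(1)}} v_n'(s)\,ds,$$
and a single application of Cauchy--Schwarz, combined with the identity $\|v_n'\|_{L^2(\R)}^2=\|\nabla u_n\|_{L^2}^2/(2\pi)$ coming from \eqref{prel2}, yields
$$|v_n(\alpha_n^{(1)})|\le |v_n(M)|+\sqrt{\alpha_n^{(1)}-M}\,\frac{\|\nabla u_n\|_{L^2}}{\sqrt{2\pi}}.$$

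It then remains to repackage the two resulting error pieces into the $\circ(1)$ appearing in the statement. The boundary contribution $|v_n(M)|$ is $\circ(1)$ by Lemma~\ref{basicproper} applied on the window $]-\infty,M+1[$. For the coefficient of $\sqrt{\alpha_n^{(1)}}$, I would extract a further subsequence along which $\|\nabla u_n\|_{L^2}$ actually attains its $\limsup$, so that $\|\nabla u_n\|_{L^2}/\sqrt{2\pi}=C+\circ(1)$; the two $\circ(1)$ terms can then be combined into one. I do not foresee a genuine obstacle in this argument; the only mildly delicate point is book-keeping, since the two error pieces have different sources (pointwise smallness of $v_n$ at a fixed $s$-value coming from the radial embedding versus subsequential convergence of the gradient norm), and they must both be absorbed into the single trailing $\circ(1)$.
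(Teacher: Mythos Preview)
Your argument is correct and essentially identical to the paper's. The paper takes the reference point $M=0$ (using $v_n(0)\to 0$ from \eqref{bound}) rather than a general $M$, and applies the same Cauchy--Schwarz step together with \eqref{prel2}; your additional remarks about the subsequence extraction and the packaging of the $\circ(1)$ term are just making explicit what the paper leaves implicit.
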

\begin{proof}
The left hand side inequality follows immediately from Corollary
\ref{alphan1}. In other respects, noticing that by virtue of
\eqref{bound}, the sequence $v_n(0)\to 0$, one can write for any
positive real $s$
$$
|v_n(s)|=\Big|v_n(0)+\Int_0^s\,v_n'(\tau)\,d\tau\Big|\leq
|v_n(0)|+s^{1/2}\|v_n'\|_{L^2}\leq  |v_n(0)|+s^{1/2}\,\frac{\|\na
u_n\|_{L^2}}{\sqrt{2\pi}},
$$
from which the right hand side of the desired inequality follows.
\end{proof}
Now we are able to extract the first profile. To do so, let us set
$$
\psi_n(y)=\sqrt{\frac{2\pi}{\alpha_n^{(1)}}}\;v_n(\alpha_n^{(1)} y).
$$
The following lemma summarize the principle properties of
$(\psi_n)$.
\begin{lem} Under notations of Corollary \ref{2.4}, there exists a
constant $C$ such that
$$
\frac{ A_0}{2}\sqrt{2\pi}\leq |\psi_n(1)|\leq  C +\circ(1).
$$
Moreover, there exists a profile $\psi^{(1)} \in{\cP}$  such that,
up to a subsequence extraction
$$
\psi_n'\rightharpoonup (\psi^{(1)})'\quad\mbox{in}\quad
L^2(\R)\quad\mbox{and}\quad \|(\psi^{(1)})'\|_{L^2}\geq
\frac{\sqrt{2\pi}}{2}\,A_0.
$$
\end{lem}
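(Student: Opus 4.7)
The plan is to exploit the scaling $\psi_n(y) = \sqrt{2\pi/\alpha_n^{(1)}}\, v_n(\alpha_n^{(1)} y)$ to transfer the estimates of Corollary \ref{2.4} to $\psi_n$, extract a weak limit of $(\psi_n')$ in $L^2(\R)$, and then carefully antidifferentiate it to obtain a profile in $\cP$.

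First, the pointwise bounds on $|\psi_n(1)|$ follow by simply multiplying the inequalities of Corollary \ref{2.4} by $\sqrt{2\pi/\alpha_n^{(1)}}$; since $\alpha_n^{(1)} \to \infty$, the additive error $\sqrt{2\pi/\alpha_n^{(1)}}\cdot\circ(1)$ remains $\circ(1)$. Next, a change of variable together with \eqref{prel2} gives $\|\psi_n'\|_{L^2(\R)}^2 = 2\pi\|v_n'\|_{L^2(\R)}^2 = \|\nabla u_n\|_{L^2(\R^2)}^2$, which is uniformly bounded by hypothesis. Extracting a subsequence, I obtain $\psi_n' \rightharpoonup \Psi$ weakly in $L^2(\R)$ for some $\Psi \in L^2(\R)$.

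The delicate step is to produce from $\Psi$ a genuine profile in $\cP$, that is, one vanishing on $(-\infty,0]$. The key observation is that for $y \leq 0$ we have $\alpha_n^{(1)} y \leq 0$, so Lemma \ref{basicproper} gives
$$|\psi_n(y)| \leq \sqrt{2\pi/\alpha_n^{(1)}}\,\|v_n\|_{L^\infty(-\infty,0]} \longrightarrow 0$$
uniformly in $y \leq 0$. Testing the weak convergence $\psi_n' \rightharpoonup \Psi$ against $\mathbf{1}_{[a,b]}$ for arbitrary $a \leq b \leq 0$ yields $\int_a^b \Psi(\tau)\,d\tau = \lim_{n\to\infty}(\psi_n(b)-\psi_n(a)) = 0$, so $\Psi \equiv 0$ almost everywhere on $(-\infty,0]$. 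I can then define
$$\psi^{(1)}(y) := \int_0^y \Psi(\tau)\,d\tau \ \text{for}\ y \geq 0, \qquad \psi^{(1)}(y) := 0 \ \text{for}\ y<0,$$
so that $(\psi^{(1)})' = \Psi \in L^2(\R)$. The Cauchy--Schwarz bound $|\psi^{(1)}(y)|^2 \leq y\,\|\Psi\|_{L^2}^2$ for $y>0$ combined with $\int_0^\infty y\,{\rm e}^{-2y}\,dy < \infty$ shows $\psi^{(1)} \in L^2(\R, {\rm e}^{-2s}ds)$, so $\psi^{(1)} \in \cP$.

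Finally, the lower bound is obtained by testing the weak convergence against $\mathbf{1}_{[0,1]}$: the identity $\int_0^1 \psi_n'(\tau)\,d\tau = \psi_n(1) - \psi_n(0)$ together with $\psi_n(0) \to 0$ (a consequence of \eqref{bound}) gives $\psi_n(1) \to \int_0^1 \Psi\,d\tau = \psi^{(1)}(1)$. The already established lower bound on $|\psi_n(1)|$ passes to the limit, yielding $|\psi^{(1)}(1)| \geq \frac{A_0}{2}\sqrt{2\pi}$, and Cauchy--Schwarz applied to $\psi^{(1)}(1) = \int_0^1 (\psi^{(1)})'\,d\tau$ gives $\|(\psi^{(1)})'\|_{L^2} \geq |\psi^{(1)}(1)|$, which concludes. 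The only real obstacle is confirming that $\Psi$ vanishes on $(-\infty,0]$ so that the antiderivative qualifies as a profile; everything else is routine bookkeeping around the scaling and the weak convergence.
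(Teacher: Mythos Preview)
Your proof is correct and follows essentially the same route as the paper's. The one noteworthy variation is in showing that the weak limit vanishes on $(-\infty,0]$: the paper argues via the $L^2$ bound on $u_n$, obtaining $\int_{-\infty}^0|\psi_n|^2\,ds\leq C/(\alpha_n^{(1)})^2\to 0$ and hence a.e.\ convergence to zero along a further subsequence, while you use the uniform decay $\|v_n\|_{L^\infty(]-\infty,0])}\to 0$ from \eqref{bound} directly, which is a bit cleaner and avoids the additional subsequence extraction.
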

\begin{proof}
The first assertion is contained in Corollary \ref{2.4}. To prove
the second one, let us first remark that  since $\|\psi'_n\|_{L^2}=\|\nabla u_n\|_{L^2}$  then the sequence $(\psi_n')$ is bounded in
$L^2$. Thus, up to a
subsequence extraction, $(\psi_n')$ converges weakly in $L^2$ to
some function $g\in L^2$. In addition, $(\psi_n(0))$ converges in
$\R$ to $0$ and (still up to a subsequence  extraction)
$(\psi_n(1))$ converges in $\R$ to some constant $a$ satisfying
$|a|\geq \frac{ \sqrt{2\pi}}{2}\,A_0$. Let us then introduce the
function
$$
\psi^{(1)}(s):=\Int_0^s\,g(\tau)\,d\tau.
$$
Our task now is to show that $\psi^{(1)}$ belongs to the set
${\cP}$. Clearly $\psi^{(1)}\in {\mathcal C}(\R)$ and
$(\psi^{(1)})'=g\in L^2(\R)$. Moreover, since
$$
|\psi^{(1)}(s)|=\Big|\int_0^s\,g(\tau)\,d\tau\Big|\leq
s^{1/2}\,\|g\|_{L^2(\R)},
$$
we get $\psi^{(1)}\in L^2(\R^+, {\rm e}^{-2s}\,ds)$. It remains to
prove that $\psi^{(1)}(s)=0$ for all $s\leq 0$. Using the
boundedness of the sequence $(u_n)$ in $L^2(\R^2)$ and the fact that
$$
\|u_n\|_{L^2}^2=(\alpha_n^{(1)})^2\,\int_{\R}\, |\psi_n(s)|^2\,{\rm
e}^{-2\alpha_n^{(1)} s}\,ds,
$$
we deduce that
$$
\int_{-\infty}^0\,|\psi_n(s)|^2\,ds\leq
\frac{C}{(\alpha_n^{(1)})^2}.
$$
Hence, $(\psi_n)$ converges strongly to zero in $L^2(]-\infty, 0[)$,
and then almost everywhere (still up to a subsequence  extraction).
In other respects, since $(\psi'_n)$ converges weakly to $g$ in
$L^2(\R)$ and $\psi_n\in H^1_{loc}(\R)$, we infer that
$$
\psi_n(s)-\psi_n(0)=\int_0^s\,\psi'_n(\tau)\,d\tau\to
\int_0^s\,g(\tau)\,d\tau=\psi^{(1)}(s),
$$
from which it follows that
$$
\psi_n(s)\to \psi^{(1)}(s),\quad\mbox{for all}\quad s\in\R\,.
$$
As $\psi_n $ goes to zero for all $s \leq 0$, we deduce that $\psi^{(1)}(s)=0$ for all $s\leq 0$. Finally, we
have proved that $\psi^{(1)}\in {\cP}$ and $|\psi^{(1)}(1)|=|a|\geq
\frac{\sqrt{2\pi}}{2}\,A_0$. The fact that
$$
| \psi^{(1)}(1)|=\Big|\Int_0^1\,(\psi^{(1)})'(\tau)\,d\tau\Big|\leq
\|(\psi^{(1)})'\|_{L^2},
$$
yields $\|(\psi^{(1)})'\|_{L^2}\geq \frac{\sqrt{2\pi}}{2}\,A_0$.
\end{proof}
Set
 \begin{equation}
 \label{reste1}
{\rm
r}^{(1)}_n(x)=\sqrt{\frac{\alpha_n^{(1)}}{2\pi}}\;\left(\psi_n\left(\frac{-\log|x|}{\alpha_n^{(1)}}\right)-\psi^{(1)}\left(\frac{-\log|x|}{\alpha_n^{(1)}}\right)\right).
\end{equation}
It can be easily seen that
$$
\|\nabla\,{\rm
r}^{(1)}_n\|_{L^2(\R^2)}^2=\|\psi'_n-(\psi^{(1)})'\|_{L^2(\R)}^2\,.
$$
Taking advantage of  the fact that $(\psi'_n)$ converges weakly in
$L^2(\R)$ to $(\psi^{(1)})'$, we get the following result
\begin{lem}
\label{Concl1} Let $(u_n)$ be a sequence in $H^1_{rad}(\R^2)$
satisfying the assumptions of the Theorem \ref{main}. Then there
exists a scale $(\alpha_n^{(1)})$ and a profile $\psi^{(1)}$ such
that
$$
 \|(\psi^{(1)})'\|_{L^2}\geq \frac{\sqrt{2\pi}}{2}\,A_0,
 $$
and
 \begin{equation}
 \label{norme1}
 \limsup_{n\to\infty}\,\|\nabla\,{\rm r}^{(1)}_n\|_{L^2}^2\leq \limsup_{n\to\infty}\,\|\nabla\,u_n\|_{L^2}^2- \|(\psi^{(1)})'\|_{L^2}^2\,.
 \end{equation}
 where ${\rm
r}^{(1)}_n$ is given by \eqref{reste1}.
 \end{lem}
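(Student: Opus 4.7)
The plan is to recognize that the existence of the scale $(\alpha_n^{(1)})$ and the profile $\psi^{(1)}\in\cP$ together with the lower bound $\|(\psi^{(1)})'\|_{L^2}\geq \tfrac{\sqrt{2\pi}}{2}\,A_0$ has already been established in Corollary~\ref{alphan1} and in the lemma immediately preceding. Consequently the only new content of Lemma~\ref{Concl1} is the energy-splitting inequality \eqref{norme1} for the remainder $\mathrm{r}_n^{(1)}$.

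First I would express the two-dimensional gradient norm of $\mathrm{r}_n^{(1)}$ as a one-dimensional $L^2$ norm. From \eqref{reste1}, setting $w_n:=\psi_n-\psi^{(1)}$, we have
$$\mathrm{r}_n^{(1)}(x)=\sqrt{\frac{\alpha_n^{(1)}}{2\pi}}\,w_n\!\left(\frac{-\log|x|}{\alpha_n^{(1)}}\right).$$
Applying the identity \eqref{prel2} to $\mathrm{r}_n^{(1)}$ and performing the change of variable $t=s/\alpha_n^{(1)}$ in the resulting one-dimensional integral gives
$$\|\nabla \mathrm{r}_n^{(1)}\|_{L^2(\R^2)}^2=\|w_n'\|_{L^2(\R)}^2=\|\psi_n'-(\psi^{(1)})'\|_{L^2(\R)}^2.$$
The same scaling computation, applied directly to $u_n$ through the relation $\psi_n(y)=\sqrt{2\pi/\alpha_n^{(1)}}\,v_n(\alpha_n^{(1)}\,y)$, yields the companion identity $\|\psi_n'\|_{L^2(\R)}=\|\nabla u_n\|_{L^2(\R^2)}$.

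Next I would expand the squared norm of the difference and exploit the weak convergence $\psi_n'\rightharpoonup(\psi^{(1)})'$ in $L^2(\R)$ already obtained in the previous step. Expanding,
$$\|\psi_n'-(\psi^{(1)})'\|_{L^2}^2=\|\psi_n'\|_{L^2}^2-2\,\langle \psi_n',(\psi^{(1)})'\rangle_{L^2}+\|(\psi^{(1)})'\|_{L^2}^2.$$
Testing the weak convergence against the fixed element $(\psi^{(1)})'\in L^2(\R)$ shows $\langle \psi_n',(\psi^{(1)})'\rangle_{L^2}\to \|(\psi^{(1)})'\|_{L^2}^2$. Combining this with the two scaling identities above yields
$$\|\nabla \mathrm{r}_n^{(1)}\|_{L^2}^2=\|\nabla u_n\|_{L^2}^2-\|(\psi^{(1)})'\|_{L^2}^2+\circ(1),$$
and taking $\limsup_{n\to\infty}$ on both sides (the middle term being constant in $n$) delivers exactly \eqref{norme1}.

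There is no real obstacle in this argument: the algebraic heart is the standard weak-convergence expansion of $\|\psi_n'-(\psi^{(1)})'\|_{L^2}^2$, and the only slightly bookkeeping-heavy point is the verification of the scaling identity $\|\nabla \mathrm{r}_n^{(1)}\|_{L^2(\R^2)}^2=\|w_n'\|_{L^2(\R)}^2$, which however follows mechanically from \eqref{prel2} after the change of variable $t=-\log|x|/\alpha_n^{(1)}$.
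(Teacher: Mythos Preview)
Your proposal is correct and follows essentially the same approach as the paper: the paper likewise reduces the statement to the scaling identity $\|\nabla \mathrm{r}_n^{(1)}\|_{L^2(\R^2)}^2=\|\psi_n'-(\psi^{(1)})'\|_{L^2(\R)}^2$ and then invokes the weak convergence $\psi_n'\rightharpoonup(\psi^{(1)})'$ in $L^2(\R)$, which is exactly your expansion-of-the-square argument spelled out in detail.
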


\subsection{Conclusion}
Our concern now is to iterate the previous process and to prove that
the algorithmic construction converges. Observing that, for $R\geq
1$, and thanks to the fact that $\psi^{(1)}_{|]-\infty,0]}=0$,
\begin{eqnarray*}
\|{\rm r}_n^{(1)}\|_{L^2(|x|\geq R)}^2&=&(\alpha_n^{(1)})^2\,\int_{-\infty}^{-\frac{\log R}{\alpha_n^{(1)}}}\,|\psi_n(t)-\psi^{(1)}(t)|^2\,{\rm e}^{-2\alpha_n^{(1)} t}\,dt,\\
&=&(\alpha_n^{(1)})^2\,\int_{-\infty}^{-\frac{\log R}{\alpha_n^{(1)}}}\,|\psi_n(t)|^2\,{\rm e}^{-2\alpha_n^{(1)} t}\,dt,\\
&=&\|u_n\|_{L^2(|x|\geq R)}^2,
\end{eqnarray*}
we deduce that $({\rm r}_n^{(1)})$ satisfies the hypothesis of
compactness at infinity \eqref{main-assum3}. This leads, according
to \eqref{norme1}, that $({\rm r}_n^{(1)})$ is bounded in
$H^1_{rad}$ and satisfies \eqref{main-assum1}.

 Let us define $A_1=\limsup_{n\to\infty}\,\|{\rm r}_n^{(1)}\|_{\cL}$. If $A_1=0$, we stop the process. If not, we apply the above argument
 to $ {\rm r}_n^{(1)}$  and then there exists a scale $(\alpha_n^{(2)})$ satisfying the statement of Corollary \ref{alphan1}
 with $A_1$ instead of $A_0$. In particular, there exists a constant
  $C$ such that
 \begin{equation}
 \label{tilde}
 \frac{ A_1}{2}\sqrt{\alpha_n^{(2)}}\leq |\tilde{\rm r}_n^{(1)}(\alpha_n^{(2)})|\leq
C \sqrt{\alpha_n^{(2)}}+\circ(1),
\end{equation}
 where $\tilde{\rm r}_n^{(1)}(s)={\rm r}_n^{(1)}({\rm e}^{-s})$.
Moreover, we claim that $\alpha_n^{(2)}\perp\alpha_n^{(1)}$, or
equivalently that
$\log\left|\frac{\alpha_n^{(2)}}{\alpha_n^{(1)}}\right|\to\infty$.
Otherwise, there exists a constant $C$ such that
$$
\frac{1}{C}\leq\left|\frac{\alpha_n^{(2)}}{\alpha_n^{(1)}}\right|\leq
C.
$$
Now, according to \eqref{reste1}, we have
$$
\tilde{\rm r}_n^{(1)}(\alpha_n^{(2)})=
\sqrt{\frac{\alpha_n^{(1)}}{2\pi}}\;\left(\psi_n\left(\frac{\alpha_n^{(2)}}{\alpha_n^{(1)}}\right)-\psi^{(1)}\left(\frac{\alpha_n^{(2)}}{\alpha_n^{(1)}}\right)\right).
$$
This yields a contradiction in view of \eqref{tilde} and the
following convergence result (up to a subsequence extraction)
$$
\psi_n\left(\frac{\alpha_n^{(2)}}{\alpha_n^{(1)}}\right)-\psi^{(1)}\left(\frac{\alpha_n^{(2)}}{\alpha_n^{(1)}}\right)\to
0.
$$
Moreover, there exists a profile $\psi^{(2)}$ in ${\cP}$ such that
$$
{\rm
r}_n^{(1)}(x)=\sqrt{\frac{\alpha_n^{(2)}}{2\pi}}\;\psi^{(2)}\left(\frac{-\log|x|}{\alpha_n^{(2)}}\right)+{\rm
r}_n^{(2)}(x),
$$
with  $\|(\psi^{(2)})'\|_{L^2}\geq \frac{\sqrt{2\pi}}{2}\,A_1$ and
$$
\limsup_{n\to\infty}\,\|\nabla\,{\rm r}^{(2)}_n\|_{L^2}^2\leq
\limsup_{n\to\infty}\,\|\nabla\,{\rm r}_n^{(1)}\|_{L^2}^2-
\|(\psi^{(2)})'\|_{L^2}^2\,.
$$
This leads to the following crucial estimate
$$
\limsup_{n\to\infty}\,\|{\rm r}^{(2)}_n\|_{H^1}^2\leq
C-\frac{\sqrt{2\pi}}{2}\,A_0^2-\frac{\sqrt{2\pi}}{2}\,A_1^2\,,
$$
with $ C= \limsup_{n\to\infty}\,\|\nabla\,u_n\|_{L^2}^2$.

At iteration $\ell$, we get
$$
u_n(x)=\Sum_{j=1}^{\ell}\,\sqrt{\frac{\alpha_n^{(j)}}{2\pi}}\;\psi^{(j)}\left(\frac{-\log|x|}{\alpha_n^{(j)}}\right)+{\rm
r}_n^{(\ell)}(x),
$$
with
$$
\limsup_{n\to\infty}\,\|{\rm r}^{(\ell)}_n\|_{H^1}^2\lesssim
1-A_0^2-A_1^2-\cdots -A_{\ell-1}^2\,.
$$
Therefore $A_\ell\to 0$ as $\ell\to\infty$ and the proof of the decomposition \eqref{decomp} is achieved.

The stability estimate \eqref{ortogonal} is a simple
consequence of the fact that for every $f, g\in L^2(\R)$ and every
orthogonal scales $\underline{\alpha}$ and $\underline{\beta}$, it
holds
$$
\int_{\R}\,\frac{1}{\sqrt{\alpha_n\beta_n}}\,f\left(\frac{s}{\alpha_n}\right)\,g\left(\frac{s}{\beta_n}\right)\,ds\to
0\quad\mbox{as}\quad n\to\infty\,.
$$
This ends the proof of the theorem.\\

Let us now go to the proof of Proposition \ref{sumOrlicz}.

\begin{proof}[Proof of Proposition \ref{sumOrlicz}]
We restrict ourselves to the example
$h_\alpha:=a\,f_\alpha+b\,f_{\alpha^2}$ where $a, b$ are two real
numbers. The general case is similar except for more technical
complications. Set $M:=\sup(|a|,|b|)$. We want to show that
$$
\|h_\alpha\|_{\cL}\to \frac{M}{\sqrt{4\pi}}\quad\mbox{as}\quad
\alpha\to\infty\,.
$$
We start by proving that
\bq
\label{Inf}
\liminf_{\alpha\to\infty}\,\|h_\alpha\|_{\cL}\geq\frac{M}{\sqrt{4\pi}}.
\eq

Let $\lambda>0$ such that
$$
\int_{\R^2}\,\left({\rm
e}^{\frac{|h_\alpha(x)|^2}{\lambda^2}}-1\right)\,dx\,\leq\,\kappa.
$$
This implies

\bq
\label{inf1}
\int_0^{{\rm e}^{-\alpha^2}}\; \left({\rm
e}^{\frac{|h_\alpha(r)|^2}{\lambda^2}}-1\right)\,r\,dr\,\leq\,\frac{\kappa}{2\pi},
\eq
and
\bq
\label{inf2}
\int_{{\rm e}^{-\alpha^2}}^{{\rm e}^{-\alpha}}\; \left({\rm
e}^{\frac{|h_\alpha(r)|^2}{\lambda^2}}-1\right)\,r\,dr\,\leq\,\frac{\kappa}{2\pi}.
\eq
Since
\begin{eqnarray*}
h_\alpha(r)&=&\; \left\{
\begin{array}{cllll}a\sqrt{\frac{\alpha}{2\pi}}+b\frac{\alpha}{\sqrt{2\pi}} \quad&\mbox{if}&\quad
r\leq {\rm e}^{-\alpha^2},\\\\ a\sqrt{\frac{\alpha}{2\pi}}-\frac{b}{\alpha\sqrt{2\pi}}\,\log r \quad
&\mbox{if}&\quad {\rm e}^{-\alpha^2}\leq r\leq {\rm e}^{-\alpha},
\end{array}
\right.
\end{eqnarray*}
we get from \eqref{inf1} and \eqref{inf2}
$$
\lambda^2\geq
\frac{\alpha\left(a+b\sqrt{\alpha}\right)^2}{2\pi\log\left(1+C{\rm
e}^{2\alpha^2}\right)}=\frac{b^2}{4\pi}+\circ(1),
$$
and
$$
\lambda^2\geq \frac{a^2\alpha+2 a
b\sqrt{\alpha}+b^2}{2\pi\log\left(1+C{\rm e}^{2\alpha}\right)}=\frac{a^2}{4\pi}+\circ(1)\,.
$$
This leads to \eqref{Inf} as desired.\\
In the general case, we have to replace  \eqref{inf1} and \eqref{inf2} by $\ell$ estimates
of that type. Indeed, assuming that $ \frac{\alpha_n^{(j)}}{ \alpha_n^{(j+1)} }  \to 0 $ when
n goes to infinity for $j = 1, 2, ..., l-1$, we replace \eqref{inf1} and \eqref{inf2}  by the fact that
\bq
\label{inf3}
\int_{{\rm e}^{- \alpha_n^{(j+1)}  }}^{{\rm e}^{- \alpha_n^{(j)}  }}  \; \left({\rm
e}^{\frac{|h_\alpha(r)|^2}{\lambda^2}}-1\right)\,r\,dr\,\leq\,\frac{\kappa}{2\pi},
\quad \quad j = 1, ..., l-1   \eq
and
\bq  \label{inf4}
\int_0^{{\rm e}^{- \alpha_n^{(l)}  }}  \; \left({\rm
e}^{\frac{|h_\alpha(r)|^2}{\lambda^2}}-1\right)\,r\,dr\,\leq\,\frac{\kappa}{2\pi}.
  \eq

Our concern now is to prove the second (and more difficult) part, that is
\bq
\label{Sup}
\limsup_{\alpha\to\infty}\,\|h_\alpha\|_{\cL}\leq\frac{M}{\sqrt{4\pi}}\,.
\eq
To do so, it is sufficient to show that for any $\eta>0$ small enough and $\alpha$ large enough
\bq
\label{sup1}
\int_{\R^2}\,\left({\rm
e}^{\frac{4\pi-\eta}{M^2}|h_\alpha(x)|^2}-1\right)\,dx\,\leq\,\kappa.
\eq
Actually, we will prove that the left hand side of  \eqref{sup1} goes to zero when
$\alpha$ goes to infinity.
We shall make use of the following lemma.
\begin{lem}
\label{p,q}
Let $p, q$ be two real numbers such that $0<p, q<2$. Set
\bq
\label{Ialpha}
{\mathbf I}_{\alpha}={\rm e}^{p\alpha}\;\int_{{\rm e}^{-\alpha^2}}^{{\rm e}^{-\alpha}}\;{\rm e}^{q\frac{\log^2 r}{\alpha^2}}\;r\,dr\,.
\eq
Then $I_\alpha\to 0$ as $\alpha\to\infty$.
\end{lem}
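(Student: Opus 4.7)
My plan is to first convert $\mathbf{I}_\alpha$ into a one-variable exponential integral using the same substitution $s = -\log r$ employed throughout the paper. Since $r\,dr = -e^{-2s}\,ds$ and the limits $r=e^{-\alpha^2},\,e^{-\alpha}$ become $s=\alpha^2,\,\alpha$, this gives
$$
\mathbf{I}_\alpha \;=\; e^{p\alpha}\int_\alpha^{\alpha^2}\exp\!\Bigl(q\,\frac{s^2}{\alpha^2}-2s\Bigr)\,ds.
$$
A further rescaling $t=s/\alpha$ puts the integral in the convenient form
$$
\mathbf{I}_\alpha \;=\; \alpha\int_1^\alpha e^{\phi(t)}\,dt,\qquad \phi(t)\;:=\;qt^2-2\alpha t+p\alpha,
$$
in which the dependence on $\alpha$ is explicit inside a single exponential.

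Next, I would exploit the fact that $\phi$ is a convex quadratic in $t$ (because $q>0$), so its maximum over the compact interval $[1,\alpha]$ is necessarily attained at one of the two endpoints. Evaluating,
$$
\phi(1)\;=\;q-(2-p)\alpha,\qquad \phi(\alpha)\;=\;-(2-q)\alpha^2+p\alpha.
$$
Both tend to $-\infty$ as $\alpha\to\infty$ because $p<2$ and $q<2$. Moreover, the quadratic decay of $\phi(\alpha)$ dominates the linear decay of $\phi(1)$, so for $\alpha$ sufficiently large we have $\phi(\alpha)<\phi(1)$ and therefore $\max_{[1,\alpha]}\phi = \phi(1) = q-(2-p)\alpha$.

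The conclusion is then a crude Laplace-type majoration: bounding the integrand by its maximum on an interval of length at most $\alpha$ yields
$$
\mathbf{I}_\alpha \;\leq\; \alpha(\alpha-1)\,e^{q-(2-p)\alpha} \;\longrightarrow\; 0
$$
as $\alpha\to\infty$, since $p<2$ makes the exponential decay beat the polynomial factor. I do not anticipate any genuine obstacle: the only point requiring care is recognising, after the rescaling $t=s/\alpha$, that the $\alpha$-dependent exponent $\phi$ is convex, which collapses the problem to evaluating $\phi$ at the endpoint $t=1$. The hypotheses $p,q\in(0,2)$ are used in exactly two places — positivity of $q$ for the convexity argument, and $p<2$ for the final decay; the constraint $q<2$ is actually only needed to know that the other endpoint contributes negligibly, which is why no sharper information is required there.
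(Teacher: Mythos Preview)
Your proof is correct. After the same first substitution $s=-\log r$ as the paper, you diverge: the paper completes the square in the exponent via the further change $y=\frac{\sqrt{q}}{\alpha}\bigl(-\log r-\frac{\alpha^{2}}{q}\bigr)$, reduces to an integral of $e^{y^{2}}$, invokes the estimate $\int_{0}^{A}e^{y^{2}}\,dy\le e^{A^{2}}/A$, and then splits into the cases $q>1$ and $q\le1$ to arrive at $\mathbf{I}_{\alpha}\lesssim e^{p\alpha}e^{(q-2)\alpha^{2}}+e^{(p-2)\alpha}$. Your rescaling $t=s/\alpha$ followed by the convexity observation bypasses all of this: the exponent being a convex quadratic in $t$ forces the maximum to sit at an endpoint, and the crude bound $\alpha(\alpha-1)e^{\phi(1)}$ already gives the dominant term $e^{(p-2)\alpha}$ directly, with no case distinction. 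The paper's route is consistent with its earlier Lemma on $\int e^{y^{2}}$ asymptotics, but yours is shorter and uses nothing beyond elementary calculus; both identify exactly the same decay rate.
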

\begin{proof}[Proof of Lemma \ref{p,q}]
The change of variable $y=\frac{\sqrt{q}}{\alpha}\left(-\log r-\frac{\alpha^2}{q}\right)$ yields
$$
{\mathbf I}_{\alpha}=\alpha\,{\rm e}^{p\alpha}\frac{{\rm
e}^{-\frac{\alpha^2}{q}}}{\sqrt{q}}\,\int_{\sqrt{q}-\frac{\alpha}{\sqrt{q}}}^{\alpha\frac{q-1}{\sqrt{q}}}\;{\rm
e}^{y^2}\;dy\,.
$$
Since $\int_0^A\,{\rm e}^{y^2}\;dy\leq \frac{{\rm
e}^{A^2}}{A}$ for every nonnegative real $A$, we get (for $q>1$ for example)
$$
{\mathbf I}_{\alpha}\lesssim {\rm e}^{p\alpha}\,{\rm e}^{(q-2)\alpha^2}+{\rm e}^{(p-2)\alpha},
$$
and the conclusion follows. We argue similarly if $q\leq 1$.
\end{proof}
We return now to the proof of \eqref{Sup}. To this end, write
\begin{eqnarray}
\frac{(4\pi-\eta)}{M^2}\,|h_\alpha|^2&=&\frac{4\pi-\eta}{M^2}\,a^2\, f_\alpha^2+\frac{4\pi-\eta}{M^2}\,b^2\, f_{\alpha^2}^2+2\frac{4\pi-\eta}{M^2}\,ab\, f_\alpha\,f_{\alpha^2}\\
&:=&A_\alpha+B_\alpha+C_\alpha. \label{ABC}
\end{eqnarray}
The simple observation
\begin{eqnarray*}
 {\rm e}^{x+y+z}-1&=& \left({\rm
e}^{x}-1\right)\left({\rm e}^{y}-1\right)\left({\rm
e}^{z}-1\right)+\left({\rm e}^{x}-1\right)\left({\rm
e}^{y}-1\right)\\ \nonumber&+&\left({\rm e}^{x}-1\right)\left({\rm
e}^{z}-1\right)+\left({\rm e}^{y}-1\right)\left({\rm
e}^{z}-1\right)\\ \nonumber&+&\left({\rm e}^{x}-1\right)+\left({\rm
e}^{y}-1\right)+\left({\rm e}^{z}-1\right),
\end{eqnarray*}
 yields
\begin{eqnarray}
\label{Obs}
\nonumber
\int_{\R^2}\,\left({\rm
e}^{\frac{4\pi-\eta}{M^2}|h_\alpha(x)|^2}-1\right)\,dx\,&=&\,\int\,\left({\rm
e}^{A_\alpha}-1\right)\left({\rm e}^{B_\alpha}-1\right)\left({\rm
e}^{C_\alpha}-1\right)+\int\,\left({\rm
e}^{A_\alpha}-1\right)\left({\rm
e}^{B_\alpha}-1\right)\\&+&\int\,\left({\rm
e}^{A_\alpha}-1\right)\left({\rm
e}^{C_\alpha}-1\right)+\int\,\left({\rm
e}^{B_\alpha}-1\right)\left({\rm
e}^{C_\alpha}-1\right)\\ \nonumber&+&\int\,\left({\rm
e}^{A_\alpha}-1\right)+\int\,\left({\rm
e}^{B_\alpha}-1\right)+\int\,\left({\rm e}^{C_\alpha}-1\right)\,.
\end{eqnarray}

 By Trudinger-Moser estimate \eqref{Mos1}, we have for $\varepsilon\geq 0$ small enough,
 \bq
 \label{cv1}
 \Big\|{\rm e}^{A_\alpha}-1\Big\|_{L^{1+\varepsilon}}+\Big\|{\rm e}^{B_\alpha}-1\Big\|_{L^{1+\varepsilon}}\to 0\quad\mbox{as}\quad \alpha\to\infty.
 \eq

 To check that the last term in \eqref{Obs} tends to zero,  we use Lebesgue Theorem in the region ${\rm e}^{-\alpha}\leq r\leq 1$. Observe that one can replace $C_\alpha$ with $\gamma\;C_\alpha$ for
any $\gamma>0$.\\

The two terms containing both $A_\alpha$ and $C_\alpha$ or $B_\alpha$ and $C_\alpha$ can be handled in a similar way. Indeed,  by H\"older inequality and \eqref{cv1}, we infer (for $\varepsilon>0$  small enough)
\begin{eqnarray*}
\int\,\left({\rm e}^{A_\alpha}-1\right)\left({\rm
e}^{C_\alpha}-1\right)+\int\,\left({\rm e}^{B_\alpha}-1\right)\left({\rm
e}^{C_\alpha}-1\right)&\leq& \Big\|{\rm
e}^{A_\alpha}-1\Big\|_{L^{1+\varepsilon}}\;\Big\|{\rm
e}^{C_\alpha}-1\Big\|_{L^{1+\frac{1}{\varepsilon}}}\\&+&\Big\|{\rm
e}^{B_\alpha}-1\Big\|_{L^{1+\varepsilon}}\;\Big\|{\rm
e}^{C_\alpha}-1\Big\|_{L^{1+\frac{1}{\varepsilon}}}\to 0\;.
\end{eqnarray*}

Now, we claim that
\bq \label{diff} \int\,\left({\rm e}^{A_\alpha}-1\right)\left({\rm
e}^{B_\alpha}-1\right)\to 0,\;\;\alpha\to\infty\,. \eq

The main difficulty in the proof of \eqref{diff} comes from the term
$$
\int_{{\rm e}^{-\alpha^2}}^{{\rm e}^{-\alpha}}\,\left({\rm
e}^{A_\alpha(r)}-1\right)\left({\rm
e}^{B_\alpha(r)}-1\right)\,r\,dr\lesssim\;{\mathbf I}_{\alpha}:=
{\rm e}^{\frac{4\pi-\eta}{M^2}\,a^2\frac{\alpha}{2\pi}}\;\int_{{\rm
e}^{-\alpha^2}}^{{\rm e}^{-\alpha}}\,{\rm
e}^{\frac{4\pi-\eta}{M^2}\,b^2\,\frac{\log^2r}{2\pi\alpha^2}}\,r\,dr.
$$

Setting $p:=\frac{4\pi-\eta}{2\pi}\,\frac{a^2}{M^2}$ and $q:=\frac{4\pi-\eta}{2\pi}\,\frac{b^2}{M^2}$, we conclude thanks to Lemma \ref{p,q} since $0<p,q<2$. It easy to see that \eqref{diff} still holds if $A_\alpha$ and $B_\alpha$ are replaced by $(1+\varepsilon)A_\alpha$ and $(1+\varepsilon)B_\alpha$ respectively, where $\varepsilon\geq 0$ is small.

 Finally, for the first term in \eqref{Obs}, we use H\"older inequality and \eqref{diff}.\\Consequently, we obtain
$$
\limsup_{\alpha\to\infty}\,\|h_\alpha\|_{\cL}\leq\frac{M}{\sqrt{4\pi}}\,.
$$
 In the general case we replace \eqref{ABC}, by $\ell + \frac{\ell(\ell-1)}{2}$ terms and the rest of the proof is very similar. This completes the proof of Proposition \ref{sumOrlicz}.
\end{proof}



\section{Qualitative study of nonlinear wave equation}
\label{Wave}


This section is devoted to the qualitative study of the solutions of
the two-dimensional nonlinear Klein-Gordon  equation
\begin{equation}
\label{NLKG} \square u +u+f(u)=0,\quad
 u:\R_t\times\R_x^2\to\R,
 \end{equation}
where $$ f(u)=u\;\left({\rm e}^{4\pi u^2}-1\right). $$
Exponential type nonlinearities have been considered in several physical models
(see e.g.~\cite{LLT} on a model of self-trapped beams in plasma). For decreasing exponential nonlinearities, T. Cazenave in \cite{Caz} proved global well-posedness together with  scattering in the case of NLS.

It is known
(see \cite{NO-Wave, NO-DCDS}) that the Cauchy problem associated
to Equation (\ref{NLKG}) with Cauchy data small enough in
$H^1\times L^2$ is globally well-posed. Moreover, subcritical,
critical and supercritical regimes in the energy space are
identified (see \cite{2Dglobal}). Global well-posedness is
established in both subcritical and critical regimes while
well-posedness fails to hold in the supercritical one (we refer to
 \cite{2Dglobal} for more details). Very recently, M. Struwe \cite{Struwe09} has constructed global smooth solutions for the 2D energy critical wave equation with radially symmetric data. Although the techniques are different, this result might be seen as an analogue of Tao's result \cite{tao} for the 3D energy supercritical wave equation. Let us emphasize that  the solutions
 of the two-dimensional nonlinear Klein-Gordon  equation
 formally satisfy the conservation of energy
\begin{eqnarray}
\label{energy} E(u,t)&=&\|\partial_t u(t)\|_{L^2}^2+\|\nabla
u(t)\|_{L^2}^2+ \frac{1}{4\pi}\; \|{\rm e}^{4\pi u(t)^2}-1\|_{L^1}\\
\nonumber &=&E(u,0):=E_0.
\end{eqnarray}
The notion of criticality here depends on the size  of the initial
energy{\footnote{This is in contrast with higher dimensions where the
criticality depends on the nonlinearity.}} $E_0$ with respect to
$1$. This relies on the so-called Trudinger-Moser type inequalities
stated in Proposition \ref{mostrud} (see \cite{DlogSob} and
references therein for more details). Let us now precise the notions
of these regimes:
\begin{defi}
\label{d1} The Cauchy problem associated to Equation (\ref{NLKG})
 with initial data $(u_0,u_1)\in H^1(\R^2)\times L^2(\R^2)$ is
said to be {\it subcritical} if
$$
E_0<1.
$$
It is said {\it critical} if $E_0=1$ and {\it supercritical} if
$E_0>1$.
\end{defi}
It is then natural to investigate the feature of solutions of the
two-dimensional nonlinear Klein-Gordon  equation taking into
account  the different regimes, as in earlier works of P. G\'erard
\cite{Ge1} and H. Bahouri-P. G\'erard \cite{BG}. The approach that
we adopt here is the one introduced by P. G\'erard in \cite{Ge1}
which consists to compare the evolution of oscillations and
concentration effects displayed by sequences of solutions of the
nonlinear Klein-Gordon  equation (\ref{NLKG}) and solutions of the
linear Klein-Gordon equation. \beq \label{LKG} \square v+v=0 \eeq
More precisely, let $(\varphi_n, \psi_n)$ be a sequence of data in
$H^1\times L^2$ supported in some fixed ball and satisfying
\begin{equation}
 \label{weak-conv}
 \varphi_n\rightharpoonup 0\quad\mbox{in}\; H^1,\quad\psi_n\rightharpoonup 0
 \quad\mbox{in}\; L^2,\end{equation}
such that
\begin{equation}
 \label{subcrit}
 E^n\leq 1,\quad n\in \N
\end{equation}
where $E^n$ stands for the energy of $(\varphi_n, \psi_n)$ given
by $$ E^n=\|\psi_n\|_{L^2}^2+\|\nabla\varphi_n\|_{L^2}^2+
\frac{1}{4\pi}\; \|{\rm e}^{4\pi \varphi_n^2}-1\|_{L^1} $$ and let
us  consider $(u_n)$ and $(v_n)$ the sequences of finite energy
solutions of (\ref{NLKG}) and (\ref{LKG}) such that $$ (u_n,
\partial_t u_n)(0)=(v_n, \partial_t v_n)(0)=(\varphi _n,\psi_n).
$$ Arguing as in \cite{Ge1}, we introduce the following definition
\begin{defi}
Let $T$ be a positive time. We shall say that the sequence $(u_n)$
is linearizable on $[0,T]$, if $$
\Sup_{t\in[0,T]}E_c(u_n-v_n,t)\longrightarrow 0\quad\mbox{as}\quad
n\rightarrow\infty $$ where $E_c(w,t)$ denotes the kinetic energy
defined by:
$$E_c(w,t)=\Int_{{\mathbb{R}}^2}\left[|\partial_t
w|^2+|\nabla_x w|^2+|w|^2\right](t,x)\;dx.
$$
\end{defi}
The following results illustrate the critical feature of the
condition $E_0=1$.
\begin{thm}
\label{Subcrit} Under the above notations, let us assume that
$\Limsup_{n\rightarrow\infty}\;E^n<1$. Then, for every positive time
$T$, the sequence $(u_n)$ is linearizable on $[0,T]$.
\end{thm}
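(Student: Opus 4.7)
Set $w_n = u_n - v_n$. Subtracting the two equations yields the inhomogeneous Klein–Gordon equation
$$\square w_n + w_n = -f(u_n), \qquad (w_n, \partial_t w_n)(0) = (0, 0).$$
The standard $L^1_t L^2_x \to L^\infty_t H^1_x$ energy estimate for the Klein–Gordon operator then gives
$$\sup_{t \in [0,T]} E_c(w_n, t)^{1/2} \leq C_T\,\|f(u_n)\|_{L^1([0,T]; L^2(\R^2))},$$
so the theorem reduces to proving $\|f(u_n)\|_{L^1_t L^2_x} \to 0$ as $n \to \infty$.

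The hypothesis $\limsup_n E^n < 1$ supplies $\delta > 0$ with $E^n \leq 1 - \delta$ for $n$ large; the conservation law \eqref{energy} then gives the uniform in-time bound $\|\nabla u_n(t)\|_{L^2}^2 \leq 1 - \delta$. Rescaling $u_n$ by $(1-\delta)^{-1/2}$ and applying Proposition \ref{mostrud} at a level slightly below $4\pi$, I would produce $\eta = \eta(\delta) > 0$ and a constant $C$ with
$$\sup_n\,\sup_{t \in [0,T]}\,\int_{\R^2}\left(e^{4\pi(1+\eta) u_n^2(t,x)}-1\right)\,dx \leq C,$$
so that $e^{4\pi u_n^2}-1$ is uniformly bounded in $L^\infty_t L^{1+\eta}_x$. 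Next, finite speed of propagation confines $u_n(t, \cdot)$ to the ball $B(0, R_0 + T)$; together with the uniform $H^1$ bound, the above Orlicz estimate (which controls $f(u_n)$ in some $L^\infty_t L^q_x$ with $q>1$), and an Aubin–Lions compactness argument combined with uniqueness for the nonlinear Klein–Gordon equation in the subcritical regime (see \cite{2Dglobal}), this forces $u_n \to 0$ strongly in $C([0,T]; L^p(\R^2))$ for every $2 \leq p < \infty$.

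To estimate $\|f(u_n(t))\|_{L^2}$, I would split $\R^2 = \{|u_n(t,\cdot)| \leq M\} \cup \{|u_n(t,\cdot)| > M\}$ for a large parameter $M$. On the low region, the pointwise bound $|f(u_n)| \leq (e^{4\pi M^2}-1)\,|u_n|$ combined with the strong $L^2$ convergence of $u_n$ yields a contribution tending to $0$ for each fixed $M$. On the high region, Chebyshev applied to the subcritical Orlicz bound gives $|\{|u_n|>M\}| \lesssim e^{-4\pi(1+\eta)M^2}$; a Hölder argument distributing the polynomial factor (controlled by $\|u_n\|_{L^{2p}} \lesssim \|u_n\|_{H^1}$ for large $p$) against the exponential factor (bounded in $L^{1+\eta}$) then yields a contribution tending to $0$ as $M \to \infty$ uniformly in $n$ and $t$. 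Integrating in $t$ and sending $n \to \infty$ followed by $M \to \infty$ closes the argument. The delicate point is exactly this high-value region: squaring $f$ produces a factor $e^{8\pi u_n^2}$ which is not directly integrable from the subcritical bound at level $4\pi(1+\eta)$ when $\eta$ is small, so one must carefully balance the exponentially small measure of $\{|u_n|>M\}$ against the polynomial decay coming from $u_n \to 0$ in higher Lebesgue norms — this trade-off is the main technical hurdle and is in the same spirit as the well-posedness estimates of \cite{2Dglobal}.
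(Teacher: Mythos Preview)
Your reduction to $\|f(u_n)\|_{L^1_t L^2_x}\to 0$, the uniform subcritical Trudinger--Moser bound $\int(e^{4\pi(1+\eta)u_n^2}-1)\,dx\leq C$, and the Aubin--Lions/uniqueness argument giving $u_n\to 0$ in $C([0,T];L^p)$ are all sound. The gap is exactly where you locate it, and it is genuine: the high-region estimate cannot be closed with the tools you list.

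The obstruction is structural. Squaring $f(u_n)$ produces the factor $e^{8\pi u_n^2}$, while your only exponential control is at level $4\pi(1+\eta)$ with $\eta<1$; no H\"older split places $e^{8\pi u_n^2}$ in any $L^q$, $q\geq 1$, uniformly, and the exponentially small measure of $\{|u_n|>M\}$ is offset by the exponentially large integrand. Concretely, take $\varphi_n=c\,f_{\alpha_n}$ (the Lions function) with $\tfrac12<c^2<1$ and $\psi_n=0$; then $\limsup_n E^n=c^2<1$, yet on the inner disk $\{|x|\leq e^{-\alpha_n}\}$ one has $u_n(0)=c\sqrt{\alpha_n/(2\pi)}$ and
\[
\|f(u_n(0))\|_{L^2(\{|u_n(0)|>M\})}^2\;\gtrsim\;\alpha_n\,e^{(4c^2-2)\alpha_n}\;\longrightarrow\;\infty.
\]
So the high-region contribution is not even bounded in $n$ at $t=0$, and your uniform-in-$(n,t)$ smallness claim fails outright. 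The ``polynomial decay'' from $u_n\to 0$ in $L^p$ cannot compensate an exponential divergence.

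What the paper supplies---and what your argument is missing---is a space-$L^\infty$ control on $u_n(t)$ that is integrable in time. This comes from the Strichartz a~priori bound $\|u_n\|_{L^4([0,T];\,\mathcal{C}^{1/4})}\leq C(T,E_0)$ (Lemma~\ref{subc-est}, proved by a bootstrap using the equation) combined with the logarithmic inequality~\eqref{H-mu}. One peels off the excess exponential via
\[
(e^{4\pi u_n^2}-1)^{2+\epsilon}\;\leq\; e^{4\pi(1+\epsilon)\|u_n(t)\|_{L^\infty}^2}\,(e^{4\pi u_n^2}-1),
\]
and the prefactor is dominated by $(C+\|u_n(t)\|_{\mathcal{C}^{1/4}})^\theta$ with $\theta<4$, which is then handled by H\"older in $t$ against the $L^4_t$ Strichartz norm. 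Without this $L^\infty$ input the jump from exponent $4\pi(1+\eta)$ to $8\pi$ is simply not available, so the Strichartz/logarithmic machinery is essential rather than a convenience.
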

\begin{rem}
Let us recall that in the case of dimension $d \geq 3$, the same
kind of result holds. More precisely,  P. G\'erard proved in
\cite{Ge1}  that in the subcritical case, the nonlinearity does
not induce any effect on the behavior of the solutions.
\end{rem}
In the critical case i.e. $\Limsup_{n\rightarrow\infty}\;E^n=1$, it
turns out that a nonlinear effect can be produced and we have the
following result:
\begin{thm}
\label{Crit} Assume that $\Limsup_{n\rightarrow\infty}\;E^n=1$ and
let $T>0$. Then the sequence  $(u_n)$ is linearizable on $[0,T]$
provided that the sequence $(v_n)$ satisfies \beq
\label{crit-cond}\limsup_{n\to\infty}\;\|v_n\|_{L^\infty([0,T]; {\cL})}<\frac{1}{\sqrt{4\pi}}\,.
\eeq
\end{thm}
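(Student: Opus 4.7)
The plan is to study $w_n := u_n - v_n$, which solves the inhomogeneous linear Klein-Gordon equation
\begin{equation*}
\square w_n + w_n = -f(u_n), \qquad (w_n, \partial_t w_n)\big|_{t=0} = (0,0).
\end{equation*}
Multiplying by $\partial_t w_n$, integrating in space and using Cauchy-Schwarz gives $\frac{1}{2}\frac{d}{dt} E_c(w_n,t) \leq \|f(u_n)(t)\|_{L^2}\sqrt{E_c(w_n,t)}$, which, since $E_c(w_n,0)=0$, integrates to
\begin{equation*}
\sqrt{E_c(w_n,t)} \leq \int_0^t \|f(u_n)(\tau)\|_{L^2_x}\, d\tau, \qquad t\in[0,T].
\end{equation*}
Everything thus reduces to controlling $\|f(u_n)\|_{L^1([0,T];L^2)}$.

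The role of the strict hypothesis $\limsup_n \|v_n\|_{L^\infty_T\cL}<\frac{1}{\sqrt{4\pi}}$ is to supply, through the very definition of the Orlicz norm, some fixed $\beta>4\pi$ and $C_0>0$ such that
\begin{equation*}
\sup_{t\in[0,T]}\int_{\R^2}\bigl(e^{\beta\, v_n(t,x)^2}-1\bigr)\,dx\leq C_0 \qquad \text{for all $n$ large},
\end{equation*}
i.e.\ a strict improvement over the bare Moser-Trudinger threshold. Exploiting this, I would run a bootstrap argument: fix $\eta>0$ small (depending on the gap $\beta-4\pi$), set $T_n^\star := \sup\{t\in[0,T] : E_c(w_n,s)\le\eta^2\ \forall s\le t\}$, and aim to show that $T_n^\star=T$ for $n$ large and that $E_c(w_n,T)\to 0$. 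Continuity and $E_c(w_n,0)=0$ ensure $T_n^\star>0$.

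On $[0,T_n^\star]$ I would split $f(u_n)=f(v_n)+[f(u_n)-f(v_n)]$. For the difference, the mean value theorem together with $|f'(a)|\lesssim (1+a^2)\,e^{4\pi a^2}$, the elementary bound $(v_n+\sigma w_n)^2\leq (1+\mu) v_n^2 + (1+\mu^{-1}) w_n^2$, and H\"older with exponents tuned to the strict gap on the $v_n$-factor (and to Moser-Trudinger on the $w_n$-factor, made usable by the bootstrap $\|w_n\|_{H^1}\leq\eta$), should produce
\begin{equation*}
\|f(u_n)(\tau)-f(v_n)(\tau)\|_{L^2}\leq C\,\sqrt{E_c(w_n,\tau)}.
\end{equation*}
For the purely linear piece, I would combine the Strichartz estimates for the 2D linear Klein-Gordon equation with the strong convergence $\varphi_n\to 0$ in $L^2$ (which follows from compact support of the data and $\varphi_n\rightharpoonup 0$ in $H^1$ via Rellich) and the improved Orlicz control on $v_n$ in order to prove
\begin{equation*}
\int_0^T \|f(v_n)(\tau)\|_{L^2}\,d\tau \xrightarrow[n\to\infty]{} 0.
\end{equation*}

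Injecting these two estimates into the energy inequality yields
\begin{equation*}
\sqrt{E_c(w_n,t)} \leq o_n(1) + C\int_0^t \sqrt{E_c(w_n,\tau)}\, d\tau, \qquad t\in[0,T_n^\star],
\end{equation*}
and Gronwall gives $\sup_{t\in[0,T_n^\star]} E_c(w_n,t)\to 0$. For $n$ large this forces $E_c(w_n,T_n^\star)<\eta^2$, which by the very definition of $T_n^\star$ prevents $T_n^\star<T$; hence $T_n^\star=T$, and linearizability on $[0,T]$ follows. The main obstacle is the nonlinear Lipschitz-type estimate on $f(u_n)-f(v_n)$: since the surplus $\beta-4\pi$ is only guaranteed to be positive and can be arbitrarily small, the admissible H\"older exponents are tightly constrained, and one really needs both the strict inequality on $\|v_n\|_\cL$ and the $H^1$-smallness of $w_n$ coming from the bootstrap in order to tame the double exponential hidden in $e^{4\pi u_n^2}$.
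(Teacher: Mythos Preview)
Your global architecture (energy inequality for $w_n$, bootstrap on the $H^1$ size of $w_n$, showing $\|f(v_n)\|_{L^1_TL^2}\to 0$) matches the paper's. The gap is in the Lipschitz step: the pointwise-in-time estimate
\[
\|f(u_n)(\tau)-f(v_n)(\tau)\|_{L^2}\leq C\,\sqrt{E_c(w_n,\tau)}
\]
with a \emph{time-independent} $C$ cannot be obtained by the H\"older scheme you sketch. After the mean-value theorem and H\"older, the factor $f'(\xi_n)$ must land in $L^q_x$ for some $q>2$ (since $w_n\in L^p_x$ only for $p<\infty$). But $|f'(\xi_n)|\sim e^{4\pi\xi_n^2}$, and after your splitting $\xi_n^2\le(1+\mu)v_n^2+(1+\mu^{-1})w_n^2$ the $v_n$-contribution is $e^{4\pi(1+\mu)v_n^2}$; its $L^q$ control would require $\int(e^{4\pi(1+\mu)q\,v_n^2}-1)\,dx$ to be bounded, i.e.\ an exponent exceeding $8\pi$ in the Orlicz integral. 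The hypothesis only furnishes $\int(e^{\beta v_n^2}-1)\le C_0$ with $\beta>4\pi$ possibly arbitrarily close to $4\pi$, so when $L$ is close to $\tfrac{1}{\sqrt{4\pi}}$ no choice of $\mu$ and H\"older exponents closes.

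The paper's remedy is precisely to abandon a pointwise-in-time bound and use the logarithmic inequality (Lemma~\ref{Hmu}) together with Strichartz. One peels off a factor $e^{c\|v_n(\tau)\|_{L^\infty}^2}$ from the exponential and converts it, via the log inequality, into a power of $\|v_n(\tau)\|_{{\mathcal C}^{1/4}}$, which lies in $L^4_t$ by Strichartz; the remaining exponential then fits under the Orlicz/Moser--Trudinger threshold. Concretely the paper performs a second-order Taylor expansion $f(u_n)=f(v_n)+f'(v_n)w_n+\tfrac12 f''(v_n+\theta_nw_n)w_n^2$, places $w_n$ and $w_n^2$ in Strichartz spaces $L^{1+1/\eta}_tL^{2+2/\eta}_x$, places $f'(v_n)$, $f''(v_n)$, $f''(u_n)$ in the dual $L^{1+\eta}_tL^{2+2\eta}_x$ (where they are bounded by the above mechanism and tend to $0$ by convergence in measure), and closes with the quadratic bootstrap $X_n\lesssim I_n+\varepsilon_nX_n+\varepsilon_nX_n^2$ (Lemma~\ref{boot}) rather than a Gronwall with constant coefficient. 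Your Gronwall scheme therefore has to be replaced by this Strichartz/log-inequality machinery.
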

\begin{rem}
In Theorem \ref{Crit}, we give a sufficient condition on the
sequence $(v_n)$ with ensures the linearizability of the sequence
$(u_n)$. Similarly to higher dimensions, this condition concerns
the solutions of linear Klein-Gordon  equation. However, unlike to
higher dimensions, we are not able to prove the converse, that is
if the sequence $(u_n)$ is linearizable on $[0,T]$ then
$\limsup_{n\to\infty}\;\|v_n\|_{L^\infty([0,T];{\cL})}<\frac{1}{\sqrt{4\pi}}$.
The main difficulty in our approach is that we do not know whether $$
 \limsup_{n\to\infty}\;\|v_n\|_{L^\infty([0,T];{\cL})}=\frac{1}{\sqrt{4\pi}}\quad\mbox{and}\quad \|f(v_n)\|_{L^1([0,T]; L^2(\R^2))}\to 0\,.
 $$
\end{rem}
Before going into the proofs of Theorems \ref{Subcrit} and
\ref{Crit}, let us recall some well known and useful tools. The main
basic tool that we shall deal with is Strichartz estimate.


\subsection{Technical tools}
\subsubsection{Strichartz estimate}
Let us first begin by introducing the definition of  admissible
pairs.
\begin{defi}
\label{admiss} Let $\rho\in\R$. We say that
$(q,r)\in[4,\infty]\times[2,\infty]$ is a $\rho$-admissible pair
if \bq \label{Adm} \frac{1}{q}+\frac{2}{r}=\rho\,. \eq When
$\rho=1$, we shall say admissible instead of $1$-admissible.
\end{defi}
For example, $(4,\infty)$ is a $1/4$-admissible pair, and for
every $0<\varepsilon\leq 1/3$, the couple $(1+1/\varepsilon,
2(1+\varepsilon))$ is an admissible pair. The following Strichartz
inequalities that can be for instance found in \cite{NO-KG} will
be of constant use in what follows.
\begin{prop}[{\bf Strichartz estimate}]
\label{stri} Let $\rho\in\R$, $(q,r)$ a $\rho$-admissible pair and
$T>0$. Then
\begin{equation}
\label{e6} \|v\|_{L^q([0,T]; {\mathrm
B}^{\rho}_{r,2}(\R^2))}\lesssim\left[\|\partial_tv(0,\cdot)\|_{L^2(\R^2)}+\|v(0,\cdot)\|_{H^1(\R^2)}+\|\square
v+v\|_{L^1([0,T];L^2(\R^2))}\right],
\end{equation}
where ${\mathrm B}^{\rho}_{r,2}(\R^2)$ stands for the usual
inhomegenous Besov space (see for example \cite{chemin} or
\cite{RS} for a detailed exposition  on Besov spaces).
\end{prop}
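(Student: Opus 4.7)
The plan is to combine Duhamel's formula with a frequency-localized dispersive estimate and the Keel-Tao $TT^*$ machinery. Writing $\langle D\rangle := (1-\Delta)^{1/2}$, Duhamel gives
$$
v(t) = \cos(t\langle D\rangle)\,v(0) + \langle D\rangle^{-1}\sin(t\langle D\rangle)\,\partial_t v(0) + \int_0^t \langle D\rangle^{-1}\sin((t-s)\langle D\rangle)\,F(s)\,ds,
$$
with $F := \square v + v$. Expanding $\cos$ and $\sin$ in terms of $e^{\pm it\langle D\rangle}$, everything reduces (after absorbing $\langle D\rangle^{-1}$ into the datum/source, trading one derivative) to the single homogeneous estimate $\|e^{it\langle D\rangle}g\|_{L^q_t {\mathrm B}^{\rho}_{r,2}} \lesssim \|g\|_{L^2}$; the retarded source contribution is then handled by the Christ-Kiselev lemma, which applies since $q\geq 4 > 1$.

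The heart of the argument is a Littlewood-Paley decomposition $g = \sum_{k\in\Z} P_k g$ and a dyadic dispersive estimate
$$
\|e^{it\langle D\rangle}P_k g\|_{L^\infty_x} \lesssim d_k(t)\,\|P_k g\|_{L^1_x}, \qquad t\neq 0,
$$
obtained by stationary-phase analysis of the oscillatory integral $\int_{\R^2} e^{i(x\cdot\xi + t\sqrt{1+|\xi|^2})}\chi_k(|\xi|)\,d\xi$. At low frequencies ($k\leq 0$) the phase Hessian in $\xi$ is non-degenerate and $d_k(t)\sim |t|^{-1}$ (Schr\"odinger-type decay); at high frequencies ($k>0$) the phase degenerates in the radial direction by the curvature of the mass shell, and one obtains $d_k(t)\sim 2^{k/2}|t|^{-1/2}$ (wave-type decay with derivative loss). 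Interpolating this pointwise bound with the energy identity $\|e^{it\langle D\rangle}P_kg\|_{L^2}=\|P_kg\|_{L^2}$ and applying the abstract Keel-Tao $TT^*$ lemma block by block (in the non-endpoint regime $q>2$, which holds since $q\geq 4$) produces
$$
\|e^{it\langle D\rangle}P_k g\|_{L^q_tL^r_x} \lesssim 2^{\rho k}\|P_k g\|_{L^2_x},
$$
where the scaling of $d_k(t)$ forces $\rho = \frac{1}{q}+\frac{2}{r}$.

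Squaring this dyadic inequality, summing in $k$ weighted by $2^{2\rho k}$, and using Minkowski's inequality (legitimate because $q\geq \max(r,2)$ under the admissibility), the left-hand side becomes the $L^q_t {\mathrm B}^{\rho}_{r,2}$-norm, while the right-hand side collapses to $\|g\|_{L^2}^2$ by the Littlewood-Paley square-function theorem, closing the homogeneous estimate. The main obstacle I anticipate is the stationary-phase work for the dyadic dispersive estimate: the Klein-Gordon phase is not homogeneous in $\xi$, its Hessian changes character across the frequency scale $|\xi|\sim 1$, and in the high-frequency regime one must separate radial and angular contributions to extract the correct $2^{k/2}|t|^{-1/2}$ decay. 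Matching this loss to the admissibility exponent $\rho$ is precisely why the target cannot be a plain Lebesgue space but must be the Besov space ${\mathrm B}^{\rho}_{r,2}$.
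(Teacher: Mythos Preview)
The paper does not prove this proposition; it is quoted as a known Strichartz inequality with a reference to Nakamura--Ozawa \cite{NO-KG}, so there is no in-paper argument to compare your attempt against. Your outline --- Duhamel, reduction to the half-wave group $e^{\pm it\langle D\rangle}$, Littlewood--Paley localization, dyadic stationary-phase dispersive bounds, Keel--Tao $TT^*$, and Christ--Kiselev for the inhomogeneity --- is exactly the standard route and is essentially what one finds in the cited literature.

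Two small bookkeeping points are worth correcting. First, after absorbing $\langle D\rangle^{-1}$ the homogeneous estimate you actually need is $\|e^{it\langle D\rangle}g\|_{L^q_t B^{\rho}_{r,2}}\lesssim\|g\|_{H^1}$ (equivalently, target $B^{\rho-1}_{r,2}$ against $\|g\|_{L^2}$), not target $B^{\rho}_{r,2}$ against $\|g\|_{L^2}$; the missing derivative is precisely the $H^1$ norm appearing on the right of \eqref{e6}. Second, the high-frequency $L^1\to L^\infty$ decay in $\R^2$ is $d_k(t)\lesssim 2^{3k/2}|t|^{-1/2}$ (the localized kernel carries a $2^{2k}$ volume factor before stationary phase contributes $2^{-k/2}|t|^{-1/2}$), not $2^{k/2}|t|^{-1/2}$. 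With these fixes the dyadic Keel--Tao output reads $\|e^{it\langle D\rangle}P_k g\|_{L^q_tL^r_x}\lesssim 2^{(1-\rho)k}\|P_k g\|_{L^2}$, and the $\ell^2_k$-summation (which requires only $q\geq 2$, not $q\geq r$) closes the estimate as you describe.
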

Now, for any time slab $I\subset\R$, we shall denote
$$
\|v\|_{\mbox{\tiny ST}(I)}:=\sup_{(q,r)\; \mbox{\tiny
admissible}}\;\|v\|_{L^q(I; {\mathrm B}^{1}_{r,2}(\R^2))}\,.
$$
By interpolation argument, this Strichartz norm is equivalent to
$$
\|v\|_{L^\infty(I; H^1(\R^2))}+\|v\|_{L^4(I; {\mathrm B}^{1}_{8/3,2}(\R^2))}\,.
$$
As $ {\mathrm B}^{1}_{r,2}(\R^2)\hookrightarrow L^p(\R^2)$ for all
$r\leq p<\infty$ (and $r\leq p\leq\infty$ if $r>2$), it follows that
\bq \label{LqLp} \|v\|_{L^q(I; L^p)}\lesssim\|v\|_{\mbox{\tiny
ST}(I)},\quad \frac{1}{q}+\frac{2}{p}\leq 1\,. \eq Proposition
\ref{stri} will often be combined with the following elementary
bootstrap lemma.
\begin{lem}
\label{boot} Let $X(t)$ be a nonnegative continuous function on
$[0,T]$ such that, for every $0\leq t\leq T$, \bq \label{boot1}
X(t)\leq a+b\,X(t)^{\theta}, \eq where $a,b>0$ and $\theta>1$ are
constants such that \bq \label{boot2}
a<\left(1-\frac{1}{\theta}\right)\frac{1}{(\theta
b)^{1/{(\theta-1)}}},\quad X(0)\leq \frac{1}{(\theta
b)^{1/{(\theta-1)}}}\,. \eq Then, for every $0\leq t\leq T$, we
have \bq \label{boot3} X(t)\leq \frac{\theta}{\theta-1}\,a\,. \eq
\end{lem}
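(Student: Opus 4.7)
The plan is to reduce everything to studying the scalar polynomial
$$
p(x) := a + bx^{\theta} - x, \qquad x \geq 0,
$$
since the hypothesis \eqref{boot1} is exactly the statement that $p(X(t)) \geq 0$ for all $t \in [0,T]$. The whole argument rests on locating the roots of $p$ and using the continuity of $X$.

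First I would compute $p'(x) = \theta b x^{\theta-1} - 1$, which vanishes at the unique point $x_0 := (\theta b)^{-1/(\theta-1)}$, a strict minimum. A direct computation gives
$$
p(x_0) = a - \Bigl(1 - \frac{1}{\theta}\Bigr) \frac{1}{(\theta b)^{1/(\theta-1)}},
$$
so the first inequality in \eqref{boot2} is precisely $p(x_0) < 0$. Since $p(0) = a > 0$ and $p(x) \to +\infty$ as $x \to \infty$, the intermediate value theorem provides two roots $0 < x_1 < x_0 < x_2$ with
$$
\{x \geq 0 : p(x) \geq 0\} = [0, x_1] \cup [x_2, +\infty).
$$

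Next I would use continuity of $X$ together with the constraint $p(X(t)) \geq 0$. The image $X([0,T])$ is a connected subset of $[0, x_1] \cup [x_2, \infty)$, hence lies entirely in one of these two intervals. The second condition in \eqref{boot2} says $X(0) \leq x_0$, and since $x_0 < x_2$, this forces $X(0) \in [0, x_1]$. Consequently $X(t) \leq x_1$ for every $t \in [0,T]$.

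It then remains to estimate $x_1$ from above by $\tfrac{\theta}{\theta-1}\,a$. Here I would use that $x_1 \leq x_0$ implies $bx_1^{\theta-1} \leq b x_0^{\theta-1} = 1/\theta$, whence $bx_1^{\theta} \leq x_1/\theta$. Plugging into the identity $x_1 = a + bx_1^{\theta}$ gives
$$
x_1 \leq a + \frac{x_1}{\theta}, \qquad \text{i.e.} \qquad x_1 \leq \frac{\theta}{\theta-1}\,a,
$$
which yields \eqref{boot3}. The only mildly delicate point is the connectedness step: one must check that $X$ cannot jump from the component $[0, x_1]$ to $[x_2, \infty)$, which is immediate from continuity and the intermediate value theorem since any such jump would force $X$ to take a value in the forbidden interval $(x_1, x_2)$ where $p < 0$.
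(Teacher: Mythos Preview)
Your proof is correct and follows essentially the same approach as the paper: both study the scalar function $p(x)=a+bx^{\theta}-x$, locate its unique minimum at $x_0=(\theta b)^{-1/(\theta-1)}$, observe that \eqref{boot2} forces $p(x_0)<0$, and then use continuity of $X$ together with $X(0)\leq x_0$ to trap $X(t)$ below the smaller root. Your version is in fact more detailed than the paper's sketch, since you explicitly derive the bound $x_1\leq \tfrac{\theta}{\theta-1}a$ on the smaller root, which the paper leaves to the reader.
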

\begin{proof}[Proof of Lemma \ref{boot}] We sketch the proof for the convenience of the reader. The function $f: x\longmapsto b x^\theta-x+a$ is decreasing on $[0, (\theta b)^{1/{(1-\theta)}}]$ and increasing on $[(\theta b)^{1/{(1-\theta)}}, \infty[$. The first assumption in \eqref{boot2} implies that $f\left((\theta b)^{1/{(1-\theta)}}\right)<0$. As $f(X(t))\geq 0$, $f(0)>0$ and $X(0)\leq  (\theta b)^{1/{(1-\theta)}}$, we deduce by continuity \eqref{boot3}.
\end{proof}
\subsubsection{Logarithmic Inequalities}
It is well known that  the space $H^1({\mathbb R}^2)$ is not
included in  $L^\I({\mathbb R}^2)$. However, resorting to an
interpolation argument, we can estimate the $L^\I$ norm of
functions in $H^1({\mathbb R}^2)$, using a stronger norm but with
a weaker growth (namely logarithmic). More precisely, we have the
following logarithmic estimate which  also holds in any bounded
domain.
\begin{lem}[Logarithmic inequality \cite{DlogSob}, Theorem 1.3]
Let $0<\alpha<1$. For any real number
$\lambda>\frac{1}{2\pi\alpha}$, a constant $C_\lambda$ exists such
that for any function $\varphi $  belonging to $
H^1_0(|x|<1)\cap{\dot{\mathcal C}}^\alpha(|x|<1)$, we have
\begin{eqnarray}
\label{LS} \|\varphi\|_{L^\infty}^2\leq
\lambda\|\nabla\varphi\|_{L^2}^2 \log\left(C_\lambda+\frac{
\|\varphi\|_{{ \dot{\mathcal C}}^\alpha }}{
\|\nabla\varphi\|_{L^2} }\right),
\end{eqnarray}
where ${ \dot{\mathcal C}}^\alpha$ denotes the homogeneous
H\"older space of regularity index $\alpha$.
\end{lem}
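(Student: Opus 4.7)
My plan is to establish this logarithmic inequality via a Littlewood--Paley decomposition $\varphi=\sum_{j\geq -1}\Delta_j\varphi$, splitting the sum at a dyadic threshold $N$ that will be optimized at the end to balance the smoothness coming from $\nabla\varphi\in L^2$ against the smoothness coming from $\varphi\in\dot{\mathcal{C}}^\alpha$. Since $\varphi\in H^1_0(|x|<1)$ has vanishing trace on $\partial B(0,1)$, extending by zero gives an element of $H^1(\R^2)\cap \dot{\mathcal{C}}^\alpha(\R^2)$ with controlled norms, so I may work globally on $\R^2$.

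For the high-frequency tail $j>N$, the H\"older regularity produces the Bernstein-type bound $\|\Delta_j\varphi\|_{L^\infty}\lesssim 2^{-j\alpha}\|\varphi\|_{\dot{\mathcal{C}}^\alpha}$, which sums geometrically to
\[
\sum_{j>N}\|\Delta_j\varphi\|_{L^\infty}\lesssim 2^{-N\alpha}\,\|\varphi\|_{\dot{\mathcal{C}}^\alpha}.
\]
For the low-frequency block $j\leq N$, the decisive two-dimensional ingredient is the scale-invariant identity $\int_{|\xi|\sim 2^j}|\xi|^{-2}\,d\xi=2\pi\log 2$. Combining Fourier inversion with the Cauchy--Schwarz splitting $|\widehat{\Delta_j\varphi}(\xi)|=|\xi|^{-1}\cdot|\xi|\,|\widehat{\Delta_j\varphi}(\xi)|$ on this annulus yields $\|\Delta_j\varphi\|_{L^\infty}^2\leq \frac{\log 2}{2\pi}\,\|\nabla\Delta_j\varphi\|_{L^2}^2$. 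A further Cauchy--Schwarz in $j$ together with the almost-orthogonality of the blocks produces
\[
\Big(\sum_{j\leq N}\|\Delta_j\varphi\|_{L^\infty}\Big)^{2}\leq \frac{(N+1)\log 2}{2\pi}\,\|\nabla\varphi\|_{L^2}^2.
\]

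It then remains to add the two contributions and choose $N$ so that $2^{N\alpha}\sim \|\varphi\|_{\dot{\mathcal{C}}^\alpha}/\|\nabla\varphi\|_{L^2}$, which forces $N\log 2\sim \alpha^{-1}\log(\|\varphi\|_{\dot{\mathcal{C}}^\alpha}/\|\nabla\varphi\|_{L^2})$ and converts the prefactor $\frac{(N+1)\log 2}{2\pi}$ into exactly $\frac{1}{2\pi\alpha}\log(\cdots)$. The main obstacle is preserving the sharp constant: $\frac{1}{2\pi\alpha}$ is the critical threshold, so each step must be tight, in particular the Fourier-inversion normalization, the $2\pi\log 2$ annular integral, and the cross term $2\sqrt{S_{\mathrm{low}}}\sqrt{S_{\mathrm{high}}}$ obtained when squaring $\|\varphi\|_{L^\infty}\leq S_{\mathrm{low}}+S_{\mathrm{high}}$. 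Any admissible $\lambda>\frac{1}{2\pi\alpha}$ is then reached by absorbing this cross term, the $O(1)$ low-frequency piece $\Delta_{-1}\varphi$, and the negligible $+1$ correction in $N+1$ into a large additive constant $C_\lambda$ inside the logarithm; this constant is forced to blow up as $\lambda\downarrow \frac{1}{2\pi\alpha}$, which is consistent with the sharpness of the Trudinger--Moser threshold $4\pi$ already reflected in Proposition \ref{proptm}.
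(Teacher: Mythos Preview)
The paper does not prove this lemma; it is simply quoted from \cite{DlogSob}, so there is no in-paper argument to compare against. Your overall strategy --- split the frequencies at a threshold, control low modes via $\dot H^1$ and Cauchy--Schwarz on the Fourier side, control high modes via the $\dot{\mathcal C}^\alpha$ seminorm, then optimize the threshold --- is exactly the approach of that reference, and your treatment of the extension by zero, the cross term, and the absorption into $C_\lambda$ is correct.

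There is, however, a genuine tension in your low-frequency step that costs the sharp constant as written. The identity $\int_{|\xi|\sim 2^j}|\xi|^{-2}\,d\xi=2\pi\log 2$ and the resulting per-block bound $\|\Delta_j\varphi\|_{L^\infty}^2\leq\frac{\log 2}{2\pi}\|\nabla\Delta_j\varphi\|_{L^2}^2$ hold only for a \emph{sharp} one-octave annulus, whereas the H\"older estimate $\|\Delta_j\varphi\|_{L^\infty}\lesssim 2^{-j\alpha}\|\varphi\|_{\dot{\mathcal C}^\alpha}$ requires \emph{smooth} Littlewood--Paley multipliers (the sharp annular Fourier multiplier is unbounded on $L^\infty$, so sharp blocks do not see $\dot{\mathcal C}^\alpha$). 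With the standard smooth blocks supported on two octaves the annular integral doubles to $4\pi\log 2$, your per-block constant becomes $\frac{\log 2}{\pi}$, and the Cauchy--Schwarz in $j$ then delivers only the non-sharp threshold $\frac{1}{\pi\alpha}$. The remedy is not to decompose the low part block by block at all: bound the full low-pass $S_N\varphi=\sum_{j\leq N}\Delta_j\varphi$ by a \emph{single} Cauchy--Schwarz on $\{1\leq|\xi|\leq 2^{N+1}\}$, using $\int_{1\leq|\xi|\leq R}|\xi|^{-2}\,d\xi=2\pi\log R$ directly, which gives
\[
\|S_N\varphi\|_{L^\infty}\leq\sqrt{\tfrac{(N+1)\log 2}{2\pi}}\,\|\nabla\varphi\|_{L^2}+C\|\nabla\varphi\|_{L^2},
\]
with the correct constant; the remainder of your argument then goes through unchanged.
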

We shall also need the following  version of the above inequality
which is available in the whole space.
\begin{lem}[\cite{DlogSob}, Theorem 1.3]
\label{Hmu}
 Let $0<\alpha<1$.  For any $\lambda>\frac{1}{2\pi\alpha}$ and
any $0<\mu\leq1$, a constant $C_{\lambda}>0$ exists such that for
any function $u\in H^1(\R^2)\cap{\mathcal C}^\alpha(\R^2)$, we
have
\begin{equation}
\label{H-mu} \|u\|^2_{L^\infty}\leq
\lambda\|u\|_{H_\mu}^2\log\left(C_{\lambda} +
\frac{8^\alpha\mu^{-\alpha}\|u\|_{{\mathcal
C}^{\alpha}}}{\|u\|_{H_\mu}}\,\,\,\right),
\end{equation}
where ${ {\mathcal C}}^\alpha$ denotes the inhomogeneous H\"older
space of regularity index $\alpha$ and  $H_\mu$  the Sobolev space
endowed with the norm $\|u\|_{H_\mu}^2:=\|\nabla
u\|_{L^2}^2+\mu^2\|u\|_{L^2}^2$.
\end{lem}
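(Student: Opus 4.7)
The strategy is to reduce Lemma~\ref{Hmu} to the bounded-domain estimate~\eqref{LS} by two successive maneuvers: a scaling that absorbs the parameter $\mu$, followed by a localization via a smooth cutoff that brings the problem onto a unit ball. Throughout, the main point is that both moves are well adapted to dimension two.

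\textbf{Step 1 (scaling).} Set $\tilde u(y):=u(y/\mu)$. A direct change of variables in two dimensions gives the identities
\[
\|\tilde u\|_{L^\infty}=\|u\|_{L^\infty},\qquad \|\nabla\tilde u\|_{L^2}^2+\|\tilde u\|_{L^2}^2=\|u\|_{H_\mu}^2,\qquad \|\tilde u\|_{\dot{\mathcal C}^\alpha}=\mu^{-\alpha}\|u\|_{\dot{\mathcal C}^\alpha}.
\]
Since $\mu\leq 1$ and $\|u\|_{L^\infty}\leq\|u\|_{\mathcal C^\alpha}$, one deduces $\|\tilde u\|_{\mathcal C^\alpha}\leq \mu^{-\alpha}\|u\|_{\mathcal C^\alpha}$. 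Hence the target inequality for $u$ will follow from the analogous statement for $\tilde u$ with $\mu=1$, provided we carry the factor $\mu^{-\alpha}$ through to the logarithm.

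\textbf{Step 2 (localization and application of \eqref{LS}).} Fix an arbitrary point $y_0\in\R^2$ and a once-for-all smooth bump $\chi\in\mathcal D(B(0,1))$ with $\chi\equiv 1$ on a neighborhood of the origin; set $\chi_{y_0}(\cdot):=\chi(\cdot-y_0)$. The function $w:=\tilde u\,\chi_{y_0}$ lies in $H^1_0(B(y_0,1))\cap \dot{\mathcal C}^\alpha(B(y_0,1))$ and satisfies $w(y_0)=\tilde u(y_0)$, so the translation-invariant version of \eqref{LS} yields
\[
|\tilde u(y_0)|^2\leq \lambda\,\|\nabla w\|_{L^2}^2\,\log\!\Big(C_\lambda+\frac{\|w\|_{\dot{\mathcal C}^\alpha}}{\|\nabla w\|_{L^2}}\Big).
\]
The Leibniz rule together with the support of $\chi_{y_0}$ gives $\|\nabla w\|_{L^2}\lesssim \|\tilde u\|_{H^1}$ and $\|w\|_{\dot{\mathcal C}^\alpha}\lesssim \|\tilde u\|_{L^\infty}+\|\tilde u\|_{\dot{\mathcal C}^\alpha}\lesssim \|\tilde u\|_{\mathcal C^\alpha}$. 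Plugging these into the previous estimate, taking the supremum over $y_0$, and unwinding the scaling of Step~1 yields the announced inequality; the explicit numerical factor $8^\alpha$ comes from a careful tuning of the bump (for instance choosing $\chi\equiv 1$ on $B(0,1/8)$ so that the Hölder seminorm of the truncation is measured at that scale).

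\textbf{Main obstacle.} The sharp threshold $\lambda>1/(2\pi\alpha)$ is inherited from the Trudinger--Moser inequality underlying~\eqref{LS} and tolerates no multiplicative loss. However, the cutoff step genuinely produces an absolute constant $C>1$ in $\|\nabla w\|_{L^2}\leq C\|\tilde u\|_{H^1}$, which would naively replace $\lambda$ by $\lambda C^2$ in front of $\|u\|_{H_\mu}^2$. Bypassing this degradation is the technical heart of the proof: one must either absorb the surplus into the logarithmic term (legitimate because the constant $C_\lambda$ may depend freely on $\lambda$), or design the bump so that its gradient contribution on the annular region where $\chi$ transitions is negligible compared with $\|\nabla \tilde u\|_{L^2}$. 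Managing this interplay of constants is precisely where the specific shape of the bound, and in particular the factor $8^\alpha$, is determined.
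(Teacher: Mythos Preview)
The paper does not prove this lemma at all: it is quoted verbatim from \cite{DlogSob} (Theorem~1.3 there), so there is no ``paper's own proof'' to compare against. What you have written is therefore an independent attempt at a derivation.

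Your scaling reduction in Step~1 is correct and is indeed the natural way to absorb the parameter~$\mu$; all the identities you list hold in dimension two.

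The genuine issue is the one you yourself flag as the ``main obstacle,'' and your sketch does not resolve it. After cutting off, you obtain
\[
|\tilde u(y_0)|^2\le \lambda'\,\|\nabla w\|_{L^2}^2\,\log\!\Big(C_{\lambda'}+\frac{\|w\|_{\dot{\mathcal C}^\alpha}}{\|\nabla w\|_{L^2}}\Big),
\]
and you then want to replace $\|\nabla w\|_{L^2}$ by $\|\tilde u\|_{H^1}$. Two remarks. First, the map $t\mapsto t\log(C+A/\sqrt{t})$ is increasing only once $C$ is large enough (say $C\ge e$), so monotonicity is available but must be invoked explicitly. Second, with a cutoff supported in the unit ball one only has $\|\nabla w\|_{L^2}^2\le(1+\varepsilon)\|\nabla\tilde u\|_{L^2}^2+C_\varepsilon\|\tilde u\|_{L^2}^2$, and there is no reason for $C_\varepsilon\le 1+\varepsilon$; your two proposed fixes (``absorb into the log'' or ``make the annular contribution negligible'') are not carried out, and the first one in particular does not work as stated because the surplus multiplies the \emph{prefactor}, not the argument of the logarithm. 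A way to close the gap is to cut off at a large radius $R$ (so that $\|\nabla\chi_R\|_{L^\infty}\sim R^{-1}$ and the $L^2$ contribution of $\tilde u\nabla\chi_R$ becomes $\le(1+\varepsilon)\mu^2\|u\|_{L^2}^2$ for $R=R(\varepsilon)$ fixed), use the scale-invariance of \eqref{LS} in two dimensions (which only inserts a harmless factor $R^\alpha$ inside the logarithm), and finally apply \eqref{LS} with a slightly smaller $\lambda'\in(\tfrac{1}{2\pi\alpha},\lambda)$ to swallow the $(1+\varepsilon)$. As written, your argument stops exactly at the point where this has to be done.
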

\subsubsection{Convergence in measure}
Similarly to  higher dimensions (see \cite{Ge1}), the concept of
convergence in measure occurs in the process of the proof of
Theorems \ref{Subcrit} and \ref{Crit}. For the convenience of the
reader, let us give an outline of this notion. In many cases, the
convergence in Lebesgue space $L^1$ is reduced to the convergence in
measure.
\begin{defi}
\label{CVmeas} Let $\Omega$ be a measurable subset of $\R^m$ and
$(u_n)$ be a sequence of measurable functions on $\Omega$. We say
that the sequence $(u_n)$ converges in measure to $u$ if, for every
$\varepsilon>0$,
$$
\Big|\{\, y\in\Omega\;;\;\;\;|u_n(y)-u(y)|\geq \varepsilon\,\}\Big|\to
0\quad\mbox{as}\quad n\to\infty,
$$
where $|B|$ stands for the Lebesgue measure of a measurable set
$B\subset\R^m$.
\end{defi}
It is clear that the convergence in $L^1$ implies the convergence in
measure. The contrary is also true if we require the boundedness in
some Lebesgue space $L^q$ with $q>1$. More precisely, we have the following well known result.
\begin{prop}
\label{CVMeas} Let $\Omega$ be a measurable subset of $\R^m$ with
finite measure and $(u_n)$ be a bounded sequence in $L^q(\Omega)$
for some $q>1$. Then, the sequence $(u_n)$ converges to $u$ in
$L^1(\Omega)$ if, and only if, it converges to $u$ in measure in
$\Omega$.
\end{prop}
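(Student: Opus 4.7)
The plan is to prove the two implications separately, with the forward direction being elementary and the converse being the substantive part that actually uses both hypotheses (finite measure and $L^q$-boundedness).

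For the forward implication, assuming $\|u_n-u\|_{L^1(\Omega)}\to 0$, I would simply apply Markov's inequality: for any $\varepsilon>0$,
\[
\big|\{y\in\Omega:|u_n(y)-u(y)|\geq \varepsilon\}\big|\leq \frac{1}{\varepsilon}\,\|u_n-u\|_{L^1(\Omega)}\longrightarrow 0.
\]
This requires no hypothesis beyond $L^1$ convergence and works on any measure space.

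For the converse, assume $(u_n)$ converges to $u$ in measure and $\|u_n\|_{L^q}\leq C$ for some $q>1$. First I would check that the limit $u$ is itself in $L^q(\Omega)$: convergence in measure permits extracting a subsequence $u_{n_k}\to u$ almost everywhere, and Fatou's lemma gives $\|u\|_{L^q}\leq \liminf_k \|u_{n_k}\|_{L^q}\leq C$. Consequently, setting $w_n:=u_n-u$, the sequence $(w_n)$ is bounded in $L^q(\Omega)$ with norm bounded by $2C$, and it still converges to $0$ in measure. The core of the argument is then the splitting, for fixed $\varepsilon>0$,
\[
\int_{\Omega}|w_n|\,dy=\int_{\{|w_n|<\varepsilon\}}|w_n|\,dy+\int_{\{|w_n|\geq \varepsilon\}}|w_n|\,dy\leq \varepsilon\,|\Omega|+\int_{A_n^{\varepsilon}}|w_n|\,dy,
\]
where $A_n^\varepsilon:=\{|w_n|\geq \varepsilon\}$. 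Applying Hölder's inequality with conjugate exponent $q'=q/(q-1)<\infty$,
\[
\int_{A_n^{\varepsilon}}|w_n|\,dy\leq \|w_n\|_{L^q}\,|A_n^{\varepsilon}|^{1/q'}\leq 2C\,|A_n^{\varepsilon}|^{1/q'}.
\]
Since convergence in measure gives $|A_n^{\varepsilon}|\to 0$ as $n\to\infty$ and $q'$ is finite thanks to $q>1$, we obtain $\limsup_n \|w_n\|_{L^1}\leq \varepsilon\,|\Omega|$, and $\varepsilon>0$ being arbitrary together with $|\Omega|<\infty$ ends the argument.

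The only delicate point worth flagging is the role of each hypothesis: finite measure of $\Omega$ is what makes the first piece $\varepsilon\,|\Omega|$ small, while the $L^q$ bound with $q>1$ is precisely what allows Hölder to convert the vanishing of $|A_n^\varepsilon|$ into the vanishing of the $L^1$ integral on $A_n^\varepsilon$ (that is, uniform integrability). Neither hypothesis can be dropped, as standard counterexamples with escaping bumps or growing spikes show.
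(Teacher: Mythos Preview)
Your proof is correct and follows essentially the same approach as the paper: Tchebychev/Markov for the forward direction, and for the converse, extraction of an a.e.\ convergent subsequence plus Fatou to place $u$ in $L^q$, followed by the same splitting over $\{|u_n-u|<\varepsilon\}$ and $\{|u_n-u|\geq\varepsilon\}$ with H\"older on the second piece. The only cosmetic difference is that the paper invokes Egorov's lemma to obtain the a.e.\ convergent subsequence, whereas you cite the standard Riesz-type fact directly; your route is arguably cleaner there.
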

\begin{proof}
The fact that the convergence in $L^1$ implies the convergence in
measure follows immediately from the following Tchebychev's inequality
$$
\varepsilon\;\Big|\{\, y\in\Omega\;;\;\;\;|u_n(y)-u(y)|\geq \varepsilon\,\}\Big|\;\leq\; \|u_n-u\|_{L^1}\;.
$$
To prove the converse, let us show first that $u$ belongs to $L^q(\Omega)$. Since the sequence $(u_n)$ converges to $u$ in measure, we get thanks to Egorov's lemma, up to subsequence extraction
$$
u_n\to u\quad\mbox{a.e.}\quad \mbox{in}\quad \Omega\;.
$$
The Fatou's lemma and the boundedness of $(u_n)$ in $L^q$ imply then
$$
\int_{\Omega}\,|u(y)|^q\,dy\leq \liminf_{n\to\infty}\,\int_{\Omega}\,|u_n(y)|^q\,dy\leq C\,.
$$
According to H\"older inequality, we have  for any fixed $\varepsilon>0$
\begin{eqnarray*}
\|u_n-u\|_{L^1}&=&\int_{\{|u_n-u|<\varepsilon\}}\,|u_n-u| +\int_{\{|u_n-u|\geq \varepsilon\}}\,|u_n-u|\\
&\leq& \varepsilon |\Omega|+\|u_n-u\|_{L^q}\,|\{|u_n-u|\geq \varepsilon\}|^{1-\frac{1}{q}},
\end{eqnarray*}
which ensures the result.
\end{proof}

\subsection{Subcritical case}

The aim of this section is to prove  that the nonlinear term does
not affect the behavior of the solutions in the  subcritical case.
By hypothesis  in that case, there exists some nonnegative real
$\r$ such that $\Limsup_{n\rightarrow\infty}\;E^n=1-\r$.
The main point for the proof of  Theorem \ref{Subcrit} is based on
the following lemma.
\begin{lem}
\label{subc-est} For every $T>0$ and $E_0<1$, there exists a
constant $C(T,E_0)$, such that every solution $u$ of the nonlinear
Klein-Gordon  equation (\ref{NLKG}) of energy $E(u)\leq E_0$,
satisfies \beq \label{subc-stri} \|u\|_{L^4([0,T]; {\cC}^{1/4})}\leq
C(T,E_0).
 \eeq
\end{lem}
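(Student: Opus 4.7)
The plan is to combine the Strichartz inequality (applied with the $1/4$-admissible pair $(4,\infty)$, so that $B^{1/4}_{\infty,2}\hookrightarrow\mathcal{C}^{1/4}$) with a bootstrap on short subintervals, using the logarithmic inequality of Lemma~\ref{Hmu} to turn the exponential nonlinearity into polynomial growth in the Strichartz norm itself. First, Proposition~\ref{stri} applied to \eqref{NLKG} gives, for every subinterval $I\subset[0,T]$,
$$
\|u\|_{L^4(I;\mathcal{C}^{1/4})}\,\lesssim\,\|u\|_{L^4(I;B^{1/4}_{\infty,2})}\,\lesssim\,\sqrt{E_0}+\|f(u)\|_{L^1(I;L^2(\R^2))},
$$
the data being uniformly bounded by $\sqrt{E_0}$ thanks to the energy identity \eqref{energy}.

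For the nonlinear term I would use the elementary $\|\phi\|_{L^2}^{2}\le\|\phi\|_{L^\infty}\|\phi\|_{L^1}$ applied to $\phi=e^{4\pi u^2}-1$, combined with the energy bound $\|e^{4\pi u^2}-1\|_{L^1}\le 4\pi E_0$, to obtain
$$
\|f(u(t))\|_{L^2}\,\lesssim\,\sqrt{E_0}\,\|u(t)\|_{L^\infty}\,e^{2\pi\|u(t)\|_{L^\infty}^{2}}.
$$
The crucial step is then to feed this into Lemma~\ref{Hmu} with $\alpha=1/4$, $\lambda=2/\pi+\varepsilon$, and a small parameter $\mu>0$. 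The energy controls $\|\nabla u(t)\|_{L^2}^{2}\le E_0<1$ and $\|u(t)\|_{L^2}^{2}\le E_0$ (the latter from $x\le e^x-1$ in \eqref{energy}), so $\|u(t)\|_{H_\mu}^{2}\le E_0(1+\mu^2)$, and consequently
$$
2\pi\|u(t)\|_{L^\infty}^{2}\,\le\,(4+O(\varepsilon))\,E_0(1+\mu^2)\,\log\!\Bigl(C+C\mu^{-1/4}\|u(t)\|_{\mathcal{C}^{1/4}}\Bigr).
$$
Since $E_0<1$, the exponents $\varepsilon,\mu,\delta>0$ can be chosen so small that $\theta:=(4+O(\varepsilon))E_0(1+\mu^2)+\delta<4$, absorbing also the polynomial prefactor $\|u\|_{L^\infty}\lesssim 1+(\log(\cdots))^{1/2}$; this produces the key pointwise bound $\|f(u(t))\|_{L^2}\le C(E_0)\bigl(1+\|u(t)\|_{\mathcal{C}^{1/4}}\bigr)^{\theta}$.

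On a subinterval $I=[t_k,t_{k+1}]$ of length $\tau$, Hölder in time with exponents $4/\theta$ and $4/(4-\theta)$ then yields
$$
\|f(u)\|_{L^1(I;L^2)}\,\le\, C(E_0)\,\tau^{1-\theta/4}\bigl(\tau^{1/4}+\|u\|_{L^4(I;\mathcal{C}^{1/4})}\bigr)^{\theta},
$$
so that $X(t):=\|u\|_{L^4([t_k,t];\mathcal{C}^{1/4})}$ satisfies $X(t)\le a+bX(t)^{\theta}$ with $a\lesssim\sqrt{E_0}+\tau$ and $b=C(E_0)\tau^{1-\theta/4}\to0$ as $\tau\to0$. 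Since $X(t_k)=0$, Lemma~\ref{boot} applies for $\tau\le\tau_0(E_0)$ and gives $\|u\|_{L^4(I;\mathcal{C}^{1/4})}\lesssim\sqrt{E_0}$ on each subinterval; partitioning $[0,T]$ into $N=\lceil T/\tau_0(E_0)\rceil$ such intervals and summing the fourth powers produces the global estimate $\|u\|_{L^4([0,T];\mathcal{C}^{1/4})}\le C(T,E_0)$.

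The main obstacle is forcing $\theta<4$ throughout the full subcritical range $E_0<1$. The naive logarithmic inequality in $H^1$ would only deliver an exponent $\sim 8E_0$ in $e^{2\pi\|u\|_{L^\infty}^2}$ (since $\|u\|_{H^1}^{2}\le 2E_0$), which constrains one to $E_0<1/2$; it is precisely the $H_\mu$-variant of Lemma~\ref{Hmu} with $\mu$ taken arbitrarily small that replaces this by the sharper $\|\nabla u\|_{L^2}^{2}\le E_0$, saving the decisive factor of two and covering the whole subcritical regime.
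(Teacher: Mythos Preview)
Your proof is correct and follows the same overall strategy as the paper: Strichartz estimate for the $1/4$-admissible pair, the $H_\mu$-logarithmic inequality (Lemma~\ref{Hmu}) to convert $e^{2\pi\|u\|_{L^\infty}^2}$ into a power $\theta<4$ of the $\mathcal{C}^{1/4}$-norm, then H\"older in time and the bootstrap Lemma~\ref{boot} on short subintervals. The one place you depart from the paper is in the pointwise estimate of $\|f(u(t))\|_{L^2}$: the paper splits via H\"older as $\|u\|_{L^{2+2/\varepsilon}}\|e^{4\pi u^2}-1\|_{L^{2(1+\varepsilon)}}$ and then invokes the Trudinger--Moser inequality \eqref{Mos1} to control $\|e^{4\pi(1+\varepsilon)u^2}-1\|_{L^1}$, whereas you use the cruder but more direct interpolation $\|e^{4\pi u^2}-1\|_{L^2}^2\le\|e^{4\pi u^2}-1\|_{L^\infty}\|e^{4\pi u^2}-1\|_{L^1}$ together with the energy bound $\|e^{4\pi u^2}-1\|_{L^1}\le 4\pi E_0$. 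Your route is slightly more elementary in that it bypasses \eqref{Mos1} entirely; the price is the extra prefactor $\|u\|_{L^\infty}$, which you correctly absorb by the $+\delta$ in the exponent. Both approaches land on the same inequality $X(t)\le a+bX(t)^\theta$ with $\theta<4$ and conclude identically.
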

\begin{proof}[Proof of Lemma \ref{subc-est}]
By virtue of  Strichartz estimate (\ref{e6}), we
have $$ \|u\|_{L^4([0,t]; {\cC}^{1/4})}\lesssim
E_0^{1/2}+\|f(u)\|_{L^1([0,t]; L^2(\R^2))}\,.$$ To estimate $f(u)$ in
$L^1([0,t]; L^2(\R^2))$, let us apply H\"older inequality $$ \|f(u)\|_{L^2}
\leq \|u\|_{L^{2+2/\varepsilon}}\;\|{\rm e}^{4\pi
u^2}-1\|_{L^{2(1+\varepsilon)}},$$ where $\varepsilon>0$ is chosen
small enough. This leads in view of Lemma \ref{apa1} to $$
\|f(u)\|_{L^2} \lesssim \|u\|_{H^1}\;{\rm
e}^{2\pi\|u\|_{L^\infty}^2}\;\|{\rm
e}^{4\pi(1+\varepsilon)u^2}-1\|_{L^1}^{\frac{1}{2(1+\varepsilon)}}.
$$
 The logarithmic inequality (\ref{H-mu}) yields for any fixed
 $\lambda>\frac{2}{\pi}$,
$$ {\rm e}^{2\pi\|u\|_{L^\infty}^2}\lesssim
\Big(C+\frac{\|u\|_{{\cC}^{1/4}}}{E_0^{1/2}}\Big)^{2\pi\lambda
E_0} $$ and  Trudinger-Moser inequality implies that  for
$\varepsilon>0$ small enough $$ \|{\rm
e}^{4\pi(1+\varepsilon)u^2}-1\|_{L^1}\leq \kappa. $$ Plugging
these estimates together, we obtain $$
\|u\|_{L^4([0,t] ;{\cC}^{1/4})}\lesssim E_0^{1/2}\Big(1+\Int_0^t
\Big(C+\frac{\|u\|_{{\cC}^{1/4}}}{E_0^{1/2}}\Big)^{\theta}d\tau\Big)
$$ where $\theta:=2\pi\lambda E_0$. Since $E_0<1$, we
can choose $\lambda>\frac{2}{\pi}$ such that
$\theta<4$. Using H\"older inequality in time, we deduce that
\begin{eqnarray*}
\|u\|_{L^4([0,t]; {\cC}^{1/4})}&\lesssim&
E_0^{1/2}\Big(1+t^{1-\theta/4}\left(t^{1/4}+E_0^{-1/2}
\|u\|_{L^4([0,t]; {\cC}^{1/4})}\right)^{\theta}\Big)\\
&\lesssim&E_0^{1/2}+T+E_0^{\frac{1-\theta}{2}}\,t^{1-\theta/4}\;\|u\|_{L^4([0,t]; {\cC}^{1/4})}^\theta\,.
\end{eqnarray*}
In the case where $\theta>1$, we set
$$t_{max}:=\left(\frac{C E_0^{\frac 12}}{E_0^{1/2}+T}\right)^{\frac{4(\theta-1)}{4-\theta}},$$
where $C$ is some constant. Then we obtain the desired result on the
interval $[0, t_{max}]$ by absorption argument (see Lemma
\ref{boot}). Finally, to get the general case we decompose
$[0,T]=\cup_{i=0}^{i=n-1}\,[t_i,t_{i+1}]$ such that
$t_{i+1}-t_i\leq t_{max}$. Applying the Strichartz estimate on
$[t_i,t]$ with $t\leq t_{i+1}$ and using the conservation of the
energy, we deduce $$ \|u\|_{L^4([t_i,t_{i+1}],  {\cC}^{1/4})}\leq
C(T, E_0)\,, $$ which yields the desired inequality. In the case where $\theta\leq 1$ we use a convexity argument and proceed exactly as above.

Notice that
similar argument was used in higher dimension (see \cite{IM}).
\end{proof}
\begin{rem}
\label{crit-rem}
Let us emphasize that in the critical case ($E_0=1$), the conclusion of Lemma \ref{subc-est} holds provided the additional assumption
\bq
\label{criticalh}
\|u\|_{L^\I([0,T]; {\cL})}<\frac{1}{\sqrt{4\pi}}\,.
\eq
The key point consists to estimate differently the term $\|{\rm e}^{4\pi u^2}-1\|_{L^{2(1+\varepsilon)}}$. More precisely, taking advantage of \eqref{criticalh} we write
\begin{eqnarray*}
\|{\rm e}^{4\pi
u^2}-1\|_{L^{2(1+\varepsilon)}}&\leq&\;{\rm
e}^{\frac{2\pi}{1+\varepsilon}\|u\|_{L^\infty}^2}\;\|{\rm
e}^{4\pi(1+2\varepsilon)u^2}-1\|_{L^1}^{\frac{1}{2(1+\varepsilon)}}\\&\leq& \kappa^{\frac{1}{2(1+\varepsilon)}}\;{\rm
e}^{\frac{2\pi}{1+\varepsilon}\|u\|_{L^\infty}^2},
\end{eqnarray*}
which leads to the result along the same lines as above.
\end{rem}

Let us now go to the proof of Theorem \ref{Subcrit}. Denoting
by~$w_n= u_n- v_n$, we can easily verify that~$w_n$ is the
solution of the nonlinear wave equation $$  \square w_n + w_n =
-f(u_n) $$ with null Cauchy data.\\

Under energy estimate, we obtain $$\|w_n\|_T \lesssim
\|f(u_n)\|_{L^1([0,T],L^2(\R^2))},$$ where~$\|w_n\|^2_T \eqdefa
\sup_{t\in [0,T]} E_c(w_n,t)$. Therefore, to prove that the
sequence $(u_n)$ is linearizable on $[0,T]$, it suffices to
establish that $$ \|f(u_n)\|_{L^1([0,T],L^2(\R^2))}\longrightarrow
0\quad\mbox{as}\quad n\rightarrow\infty. $$ Thanks to finite
propagation speed,  for any time~$t \in [0,T]$, the sequence
$f(u_n (t,\cdot))$ is uniformly supported in a compact subset~$K$
of~$ \R^2$. So, to prove that the sequence ~$(f(u_n))$ converges
strongly to~$0$ in~$L^1([0,T],L^2(\R^2))$, we shall follow the
strategy of P.~G\'erard in \cite{Ge1} which is firstly to
demonstrate that this sequence is bounded
in~$L^{1+\epsilon}([0,T],L^{2+\epsilon}(\R^2))$, for some
nonnegative ~$\epsilon $, and secondly to prove that it converges
to~$0$ in measure in~$[0,T]\times \R^2$.\\

Let us then begin by
estimate~$\|f(u_n)\|_{L^{1+\epsilon}([0,T],L^{2+\epsilon}(\R^2))}$,
for~$\epsilon $ small enough. Since, by definition we
have~$f(u_n)= -u_n\;({\rm e}^{4\pi u_n^2}-1)$, straightforward
computations imply that
 $$ \|f(u_n)\|^{2+\epsilon}_{L^{2+\epsilon}(\R^2)} \leq C {\rm
e}^{4\pi(1+\epsilon)\|u_n\|_{L^{\infty}}^2} \int_{\R^2}
|u_n|^{2+\epsilon}({\rm e}^{4\pi u_n^2}-1)dx.
 $$
 In other respects, using the obvious estimate
 $$\sup_{x\geq 0}(x^m {\rm e}^{-\gamma x^2})=\bigl(\frac{m}{2\gamma}\bigr)^{\frac m 2}{\rm e}^{-\frac m 2},$$
 we get, for any positive real~$\eta$
$$ \int_{\R^2} |u_n|^{2+\epsilon}({\rm e}^{4\pi u_n^2}-1)dx \leq
C_\eta \int_{\R^2} ({\rm e}^{(4\pi+\eta) u_n^2}-1)dx. $$ In
conclusion $$ \|f(u_n)\|^{2+\epsilon}_{L^{2+\epsilon}(\R^2)} \leq
C_\eta {\rm e}^{4\pi(1+\epsilon)\|u_n\|_{L^{\infty}}^2}
\int_{\R^2} \bigl({\rm e}^{(4\pi+\eta)(1-\r)^2 \bigl(\frac{
u_n}{1-\r}\bigr)^2}-1\bigr)dx.
 $$
Thanks to Trudinger-Moser estimate (\ref{Mos1}), we obtain
for~$\eta$ small enough \begin{eqnarray*}
\|f(u_n)\|^{2+\epsilon}_{L^{2+\epsilon}(\R^2)} &\leq C_\eta& {\rm
e}^{4\pi(1+\epsilon)\|u_n\|_{L^{\infty}}^2} \|u_n\|_{L^{2}}^2
\\&\leq C_\eta& {\rm e}^{4\pi(1+\epsilon)\|u_n\|_{L^{\infty}}^2}
E^n \\&\leq C_\eta& {\rm
e}^{4\pi(1+\epsilon)\|u_n\|_{L^{\infty}}^2},
\end{eqnarray*}
by energy estimate, using the fact that
$\Limsup_{n\rightarrow\infty}\;E^n<1-\r$.\\

Now, taking advantage of the logarithmic estimate (\ref{H-mu}), we
get for any  $\lambda>\frac{2}{\pi}$ and any $0<\mu\leq1$ $$ {\rm
e}^{4\pi(1+\epsilon)\|u_n\|_{L^{\infty}}^2} \lesssim
\left(C_\lambda+\frac{ \|u_n\|_{ {{\mathcal C}}^{\frac 1 4} }}{
\sqrt{(1-\r)(1+\mu^2)}
}\right)^{4\lambda\pi(1+\epsilon)(1-\r)(1+\mu^2)}. $$ We
deduce that
$$\|f(u_n)\|^{1+\epsilon}_{L^{1+\epsilon}([0,T],L^{2+\epsilon}(\R^2))}
\leq C_{\eta,\r}\int^T_0 \left(C_\lambda+ \|u_n\|_{{ {\mathcal
C}}^{\frac 1 4}
}\right)^{\frac{4\lambda\pi(1+\epsilon)^2(1-\r)(1+\mu^2)}{2+\epsilon}}dt.
$$ Choosing~$\lambda $ close to $\frac{2}{\pi}$, ~$\epsilon$
and~$\mu$ small enough such that~$\theta \eqdefa
\frac{4\lambda\pi(1+\epsilon)^2(1-\r)(1+\mu^2)}{2+\epsilon}<
4$, it comes by virtue of H\"older inequality
\bq
\label{L1L2}
\|f(u_n)\|^{1+\epsilon}_{L^{1+\epsilon}([0,T],L^{2+\epsilon}(\R^2))}
\leq C(\eta,\r,T)( T^\frac 1 4 + \|u_n\|_{L^4
([0,T]),{\cC}^{1/4})})^\theta.
\eq

 Lemma \ref{subc-est} allows to
end the proof of the first step, namely that in the subcritical
case the sequence~$(f(u_n))$ is bounded
in~$L^{1+\epsilon}([0,T],L^{2+\epsilon}(\R^2))$, for $\epsilon$
small enough.\\

Since~$\epsilon > 0$, we are then reduced as it is mentioned above
to prove that the sequence~$(f(u_n))$ converges to~$0$ in measure
in~$[0,T]\times \R^2$. Thus, by definition  we have  to prove that
for every~$\epsilon
> 0$, $$\Big|\bigl\{(t,x) \in [0,T]\times \R^2 , \quad |f(u_n)|
\geq \epsilon \bigr\}\Big| \longrightarrow 0\quad\mbox{as}\quad
n\rightarrow\infty\,.$$
The function~$f$ being continuous  at the
origin with $f(0)=0$, it suffices then to show that the sequence~$(u_n)$ converges
to~$0$ in measure.

Using the fact that~$(u_n)$ is supported in a fixed compact
subset of~$[0,T] \times \R^2$, we are led thanks to Rellich's
theorem and Tchebychev's inequality to prove that the
sequence~$(u_n)$ converges weakly to~$0$ in~$H^1([0,T] \times
\R^2)$. Indeed, assume that the sequence $(u_n)$ converges weakly to $0$ in~$H^1([0,T] \times
\R^2)$, then by Rellich's theorem $(u_n)$ converges strongly to $0$ in $L^2([0,T] \times\R^2)$. The Tchebychev's inequality
\bq
\label{tchebi}
\epsilon^2\;\Big|\bigl\{(t,x) \in [0,T]\times \R^2 , \quad |u_n(t,x)|
\geq \epsilon \bigr\}\Big|\leq \|u_n\|_{L^2}^2
\eq
 implies the desired result.

Let~$u$ be a weak limit of a subsequence~$(u_n)$. By virtue of Rellich's theorem and Tchebychev's inequality \eqref{tchebi}, the sequence $(u_n)$ converges to $u$ in measure. This leads to the convergence in measure of the sequence $f(u_n)$ to $f(u)$ under the continuity of the function $f$. Combining this information with the fact that ~$(f(u_{n}))$ is bounded in some~$L^q$ with~$q >
1$ and is uniformly compactly supported, we infer by Proposition \ref{CVmeas} that the convergence is also
distributional and~$u$ is a solution of the nonlinear Klein-Gordon
equation (\ref{NLKG}). Taking advantage of Lemma \ref{subc-est}, the compactness of the support and Estimate~\eqref{L1L2}, we deduce that~$f(u)\in
L^{1}([0,T],L^{2}(\R^2))$. This allows to apply energy method, and
shows that the energy of~$u$ at time~$t$ equals the energy of the
Cauchy data at~$t=0$, which is~$0$. Hence~$u\equiv 0$ and the
proof is complete.


\subsection{Critical case}

Our purpose here is  to prove Theorem \ref{Crit}. Let $T>0$ and
assume that \bq \label{crit-assum}
L:=\limsup_{n\to\infty}\;\|v_n\|_{L^\infty([0,T];{\cL})}<\frac{1}{\sqrt{4\pi}}.
\eq As it is mentioned above, ~$w_n= u_n- v_n$,  is the solution of
the nonlinear wave equation $$ \square w_n + w_n = -f(u_n) $$ with
null Cauchy data.\\ Under energy estimate, we have
$$\|w_n\|_T  \leq C \|f(u_n)\|_{L^1([0,T],L^2(\R^2))},$$
where~$\|w_n\|_T^2 \eqdefa \sup_{t\in [0,T]} E_c(w_n,t)$. It suffices then to prove that $$\|f(u_n)\|_{L^1([0,T],L^2(\R^2))}\to 0, \quad \mbox{as}\quad n \to \infty.$$ The idea
here is to split $f(u_n)$ as follows applying Taylor's formula
$$
f(u_n)=f(v_n+w_n)=f(v_n)+f'(v_n)\,w_n+\frac{1}{2}\;f''(v_n+\theta_n\,w_n)\,w_n^2,
$$
for some $0\leq \theta_n\leq 1$. The Strichartz inequality \eqref{e6} yields
(with $I=[0,T]$) \beq \nonumber
\|w_n\|_{\mbox{\tiny ST}(I)}&\lesssim&\|f(v_n)\|_{L^1([0,T]; L^2(\R^2))}+\|f'(v_n)\,w_n\|_{L^1([0,T]; L^2(\R^2))}\\&+&\|f''(v_n+\theta_n\,w_n)\,w_n^2\|_{L^1([0,T]; L^2(\R^2))}\\ \nonumber
\label{critestim} &\lesssim& I_n+J_n+K_n\,. \eeq
The term $I_n$ is the easiest term to treat. Indeed, by Assumption \eqref{crit-assum} we have
\bq
\label{critborn}
\|v_n\|_{L^\infty([0,T]; {\cL})}\leq\,\frac{1}{\sqrt{4\pi(1+\varepsilon)}},
\eq
for some $\varepsilon>0$ and $n$ large enough. This leads by similar arguments to the ones used in the proof of the subcritical case
$$
\|f(v_n)\|^{2+\eta}_{L^{2+\eta}(\R^2)} \leq C {\rm
e}^{4\pi(1-\eta)\|v_n\|_{L^{\infty}}^2} \int_{\R^2}
({\rm e}^{4\pi(1+3\eta) v_n^2}-1)dx.
$$
In view of \eqref{critborn} and the Logarithmic inequality, we obtain for $0<\eta<\frac{\varepsilon}{4}$ and $n$ large enough
$$
\|f(v_n)\|^{1+\eta}_{L^{1+\eta}([0,T],L^{2+\eta}(\R^2))}
\leq C(\eta,T)( T^\frac 1 4 + \|v_n\|_{L^4
([0,T]),{\cC}^{1/4})})^\theta,
$$
with $\theta=\frac{4\pi\lambda(1-\eta^2)}{2+\eta}$ and $0<\lambda-\frac{2}{\pi}\ll 1$. It follows by Strichartz estimate that
$(f(v_n))$ is bounded in $L^{1+\eta}([0,T]; L^{2+\eta}(\R^2))$.

Since $v_n$ solves the linear Klein-Gordon equation with Cauchy data weakly convergent to $0$ in $H^1\times L^2$, we deduce that $(v_n)$ converges weakly to $0$ in $H^1([0,T]\times\R^2)$. This implies that $f(v_n)$
converges to $0$ in measure. This finally leads, using Proposition
\ref{CVMeas}, the fixed support property and interpolation argument, to the convergence of the sequence
$(f(v_n))$ to $0$ in $L^1([0,T]; L^2(\R^2))$.

Concerning the second term $J_n$, we will show that
\bq \label{claim2} J_n\leq \,\varepsilon_n\,\|w_n\|_{\mbox{\tiny
ST}(I)}, \eq
where $\varepsilon_n\to 0$.\\
Using H\"older inequality, we infer that
$$
J_n=\|f'(v_n)\,w_n\|_{L^1([0,T]; L^2(\R^2))}\leq
\|w_n\|_{L^{1+\frac{1}{\eta}}([0,T]; L^{2+\frac{2}{\eta}}(\R^2))}\,\|f'(v_n)\|_{L^{1+\eta}([0,T]; L^{2+2\eta}(\R^2))}\,
$$
Arguing exactly in the same manner as for $I_n$, we prove that for $\eta \leq \eta_0 $ small enough the sequence
$(f'(v_n))$ is bounded in $L^{1+\eta}([0,T]; L^{2(1+\eta)}(\R^2))$ and converges to $0$ in measure which ensures its convergence to $0$ in $L^1([0,T]; L^2(\R^2))$. Hence the sequence $(f'(v_n))$ converges to $0$ in $L^{1+\eta}([0,T]; L^{2+2\eta}(\R^2))$, for $\eta < \eta_0 $,  by interpolation argument.
This completes the proof of \eqref{claim2} under the Strichartz estimate \eqref{LqLp}. \\
For the last (more difficult) term we will establish that
\bq \label{claim3} K_n\leq \,\varepsilon_n\,\|w_n\|_{\mbox{\tiny ST}(I)}^2,\quad \varepsilon_n\to 0,
\eq provided that \bq \label{claim31}
\limsup_{n\to\infty}\,\|w_n\|_{L^\infty([0,T]; H^1)}\leq
\frac{1-L\,\sqrt{4\pi}}{2}. \eq
By H\"older inequality, Strichartz estimate and convexity argument, we infer that
\begin{eqnarray*}
K_n&\leq& \|w_n^2\|_{L^{1+\frac{1}{\eta}}([0,T]; L^{2+\frac{2}{\eta}}(\R^2))}\,\|f''(v_n+\theta_n\,w_n)\|_{L^{1+\eta}([0,T]; L^{2+2\eta}(\R^2))}\\
&\leq& \|w_n\|_{\mbox{\tiny ST}(I)}^2\left(\|f''(v_n)\|_{L^{1+\eta}([0,T]; L^{2+2\eta}(\R^2))}+\|f''(u_n)\|_{L^{1+\eta}([0,T]; L^{2+2\eta}(\R^2))}\right)\;.
\end{eqnarray*}
According to the previous step, we are then led to prove that for $\eta$ small enough
\bq
\label{critsecond}
\|f''(u_n)\|_{L^{1+\eta}([0,T]; L^{2+2\eta}(\R^2))}\to 0\;.
\eq
Arguing exactly as in the subcritical case, il suffices to establish that the sequence $(f''(u_n))$ is bounded in ${L^{1+\eta_0}([0,T]; L^{2+2\eta_0}(\R^2))}$ for some $\eta_0>0$. Let us first point out that the assumption \eqref{claim31} implies that
\begin{eqnarray*}
\limsup_{n\to\infty}\,\|u_n\|_{L^\infty([0,T]; {\cL})}&\leq&\limsup_{n\to\infty}\,\|v_n\|_{L^\infty([0,T]; {\cL})}+
\limsup_{n\to\infty}\,\|w_n\|_{L^\infty([0,T]; {\cL})}\\&\leq&
L+\frac{1}{\sqrt{4\pi}}\,\|w_n\|_{L^\infty([0,T]; H^1)}\\&\leq&\frac{1}{2}\left(L+\frac{1}{\sqrt{4\pi}}\right)<\frac{1}{\sqrt{4\pi}}\,.
\end{eqnarray*}
This ensures thanks to Remark \ref{crit-rem} the boundedness of the sequence $(u_n)$ in $L^4([0,T], {\cC}^{1/4})$ which leads to \eqref{critsecond} in a similar way than above.
Now we are in position to end of the proof of Theorem \ref{Crit}.
According to \eqref{claim2}- \eqref{claim3}, we can
rewrite \eqref{critestim} as follows \bq \label{final-esti} {\mathbf
X}_n(T)\lesssim I_n+\varepsilon_n\,{\mathbf X}_n(T)^2, \eq where ${\mathbf
X}_n(T):=\|w_n\|_{\mbox{\tiny ST}([0, T])}$. In view of Lemma \ref{boot}, we deduce that
$$
{\mathbf X}_n(T)\lesssim \varepsilon_n\,.
$$
This leads to the desired result under \eqref{claim31}. To remove the assumption \eqref{claim31}, we use classical arguments. More precisely, let us set
\bq
\label{closing}
T^*:=\sup\left\{0\leq t\leq T;\quad \limsup_{n\to\infty}\,\|w_n\|_{L^\infty([0,t]; H^1)}\leq \nu\right\},
\eq
where $\nu:=\frac{1-L\,\sqrt{4\pi}}{2}$. Since $w_n(0)=0$, we have $T^*>0$. Assume that $T^*<T$ and apply the same arguments as above, we deduce that ${\mathbf X}_n(T^*)\to 0$. By continuity this implies that $\limsup_{n\to\infty}\,\|w_n\|_{L^\infty([0,T^*+\epsilon]; H^1)}\leq \nu$ for some $\epsilon$ small enough. Obviously, this contradicts the definition of $T^*$ and hence $T^*=T$.

\section{Appendix}


\subsection{Appendix A: Some known results on Sobolev embedding}
\label{apA}


\begin{lem}
\label{apa1} $H^1(\R^2)$ is embedded into $L^p(\R^2)$ for all $2\leq
p<\infty$ but not in $L^\infty(\R^2)$.\end{lem}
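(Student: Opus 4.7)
The plan is to establish the two assertions separately.

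\textbf{Embedding $H^1 \hookrightarrow L^p$ for $p \in [2, \infty)$.} The case $p = 2$ is immediate from the definition of $H^1$. For $p > 2$ I would invoke the two-dimensional Gagliardo--Nirenberg inequality
\begin{equation*}
\|u\|_{L^p(\R^2)} \leq C_p\, \|u\|_{L^2(\R^2)}^{2/p}\, \|\nabla u\|_{L^2(\R^2)}^{1 - 2/p}, \qquad u \in H^1(\R^2),
\end{equation*}
which at once majorizes $\|u\|_{L^p}$ by $C_p \|u\|_{H^1}$. To prove it, I would reduce by density to $u \in C_c^\infty(\R^2)$ and start from the pointwise identity
\begin{equation*}
|u(x_1,x_2)|^{q} = q \int_{-\infty}^{x_1}\! |u|^{q-1}\,\mathrm{sgn}(u)\,\partial_1 u(t,x_2)\,dt,
\end{equation*}
together with its analogue in the $x_2$ direction, taking $q = p/2$. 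Bounding the right-hand sides in absolute value, multiplying the two resulting one-dimensional estimates, integrating over $\R^2$, and then applying Fubini, Cauchy--Schwarz, and the AM--GM inequality yields the recurrence
\begin{equation*}
\|u\|_{L^p}^p \leq C_p\, \|u\|_{L^{p-2}}^{p-2}\, \|\nabla u\|_{L^2}^2.
\end{equation*}
Starting at $p = 4$ this gives $\|u\|_{L^4}^4 \leq C\, \|u\|_{L^2}^2\, \|\nabla u\|_{L^2}^2 \leq C\, \|u\|_{H^1}^4$; iterating the recurrence along the even integers together with standard interpolation covers all exponents in $[2,\infty)$.

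\textbf{Non-embedding into $L^\infty$.} Here I would exploit the concentrating family $f_\alpha$ already defined earlier in the paper. A direct computation (cited above) gives $\|f_\alpha\|_{L^2}^2 = \frac{1}{4\alpha}(1 - e^{-2\alpha}) - \frac{1}{2}e^{-2\alpha}$, which is uniformly bounded for $\alpha \in (0,\infty)$, together with $\|\nabla f_\alpha\|_{L^2} = 1$; hence $(f_\alpha)_{\alpha > 0}$ is bounded in $H^1(\R^2)$. On the other hand, $\|f_\alpha\|_{L^\infty} = \sqrt{\alpha/(2\pi)} \to \infty$ as $\alpha \to \infty$, which rules out any estimate of the form $\|u\|_{L^\infty} \leq C \|u\|_{H^1}$.

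\textbf{Main obstacle.} Neither half presents a genuine difficulty, both results being classical. The only step requiring any actual computation is the Gagliardo--Nirenberg recurrence, after which the positive embedding is automatic and the explicit family $f_\alpha$ instantly refutes the $L^\infty$ embedding.
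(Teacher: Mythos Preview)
Your argument is correct, but both halves follow a different route from the paper.

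For the positive embedding, the paper uses the Littlewood--Paley decomposition together with Bernstein's inequality: one writes $\|u\|_{L^p}\le \sum_{j\ge -1}\|\Delta_j u\|_{L^p}\lesssim \sum_{j\ge -1}2^{j(1-2/p)}\|\Delta_j u\|_{L^2}$ and concludes by Cauchy--Schwarz since $\sum_j 2^{-4j/p}<\infty$ for $p<\infty$. Your approach via the Gagliardo--Nirenberg recurrence $\|u\|_{L^p}^p\lesssim \|u\|_{L^{p-2}}^{p-2}\|\nabla u\|_{L^2}^2$ is more elementary---it needs nothing beyond Fubini and Cauchy--Schwarz---whereas the paper's argument presupposes the Littlewood--Paley machinery but in return works uniformly and slots naturally into the Besov-space framework used elsewhere in the paper (Strichartz estimates in $B^\rho_{r,2}$).

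For the failure of the $L^\infty$ embedding, the paper exhibits a single unbounded $H^1$ function, namely $u(x)=\varphi(x)\log(-\log|x|)$ with $\varphi$ a cutoff near the origin. You instead recycle the Lions family $f_\alpha$ and let $\alpha\to\infty$. Both are fine; the paper's choice gives one explicit function that is in $H^1\setminus L^\infty$, while your choice has the virtue of tying the counterexample to the concentration phenomenon that is the paper's main theme.
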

\begin{proof}[Proof of Lemma \ref{apa1}]
Using  Littlewood-Paley decomposition and  Bernstein inequalities
(see for instance \cite{DP}), we infer that
\begin{eqnarray*}
\|v\|_{L^p}&\leq&\sum_{j\geq -1}\,\|\triangle_j v\|_{L^p},\\
&\leq&C\sum_{j\geq -1}\,2^{-\frac{2j}{p}}\,2^j\,\|\triangle_j
v\|_{L^2}.\end{eqnarray*} Taking advantage of Schwartz inequality,
we deduce that
$$ \|v\|_{L^p}\leq \,C \Big(\sum_{j\geq
-1}\,2^{-\frac{4j}{p}}\Big)^{\frac{1}{2}}\,\|v\|_{H^1}
\leq\,C_p\,\|v\|_{H^1},$$ which achieves the proof of the embedding
for $2\leq p<\infty$.   However, $H^1(\R^2)$  is not included
in~$L^{\infty}(\R^2 )$. To be convenience, it suffices to consider
 the function~$u$  defined
by
$$u(x)= \vf(x) \log(-\log |x|)$$ for some smooth function~$\vf$
supported in~$B(0,1)$ with value~$1$ near~$0$. \end{proof} It will
be useful to notice, that in the radial case, we have the following
estimate which implies the control of the $L^\infty$-norm far away
from the origin.
\begin{lem}
\label{apa3}
 Let $u\in H^1_{rad}(\R^2)$ and $1\leq p<\infty$. Then
$$
|u(x)|\leq\frac{C_p}{r^{\frac{2}{2+p}}}\,\|u\|_{L^p}^{\frac{p}{p+2}}\|\nabla
u\|_{L^2}^{\frac{2}{p+2}},
$$
with $ r= |x|$.  In particular \bq \label{Bound}
|u(x)|\leq\frac{C_2}{r^{\frac{1}{2}}}\,\|u\|_{L^2}^{\frac{1}{2}}\|\nabla
u\|_{L^2}^{\frac{1}{2}}\leq
\frac{C_2}{r^{\frac{1}{2}}}\,\|u\|_{H^1}. \eq
\end{lem}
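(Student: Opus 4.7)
The plan is to prove the pointwise bound by a standard radial calculation going back to Strauss: represent $|u(r)|^q$ as a tail integral of its derivative, choose the exponent $q$ so that Cauchy--Schwarz matches exactly the $L^p$ and $\dot H^1$ norms with the correct radial weight $s\,ds$, and then absorb a factor of $r$ using monotonicity $r\le s$ inside the integrand.

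By density I would assume $u \in C_c^\infty(\R^2)$ radial and write $u(x)=u(r)$ with $r=|x|$. Since $u$ vanishes for large $r$, the fundamental theorem of calculus gives, for any exponent $q\ge 1$,
\begin{equation*}
|u(r)|^q = -\int_r^{\infty}\frac{d}{ds}\bigl(|u(s)|^q\bigr)\,ds \;\le\; q\int_r^{\infty}|u(s)|^{q-1}\,|u'(s)|\,ds.
\end{equation*}
Multiplying by $r$ and using $r\le s$ on the integration range yields
\begin{equation*}
r\,|u(r)|^q \;\le\; q\int_r^{\infty} s\,|u(s)|^{q-1}\,|u'(s)|\,ds.
\end{equation*}

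Now I would choose $q$ so that Cauchy--Schwarz with respect to the radial measure $s\,ds$ reproduces the norms in the statement. The natural split is
\begin{equation*}
s\,|u|^{q-1}\,|u'| \;=\; \bigl(s^{1/2}|u|^{q-1}\bigr)\cdot\bigl(s^{1/2}|u'|\bigr),
\end{equation*}
so Cauchy--Schwarz gives
\begin{equation*}
\int_r^\infty s\,|u|^{q-1}|u'|\,ds \;\le\; \Bigl(\int_0^\infty s\,|u|^{2(q-1)}\,ds\Bigr)^{1/2}\Bigl(\int_0^\infty s\,|u'|^2\,ds\Bigr)^{1/2}.
\end{equation*}
Choosing $2(q-1)=p$, i.e.\ $q=(p+2)/2$, makes the first factor equal to $(2\pi)^{-1/2}\|u\|_{L^p}^{p/2}$, while the second factor is $(2\pi)^{-1/2}\|\nabla u\|_{L^2}$ in view of the radial formulas $\|u\|_{L^p}^p=2\pi\int s|u|^p\,ds$ and $\|\nabla u\|_{L^2}^2=2\pi\int s|u'|^2\,ds$. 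Combining everything,
\begin{equation*}
r\,|u(r)|^{(p+2)/2} \;\le\; \frac{p+2}{4\pi}\,\|u\|_{L^p}^{p/2}\,\|\nabla u\|_{L^2},
\end{equation*}
and raising to the power $2/(p+2)$ yields the advertised bound with $C_p=\bigl((p+2)/(4\pi)\bigr)^{2/(p+2)}$.

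For the particular case $p=2$ one just specializes $q=2$ and obtains $r|u(r)|^2 \lesssim \|u\|_{L^2}\|\nabla u\|_{L^2}$, and the final inequality in \eqref{Bound} is immediate from $\|u\|_{L^2}\|\nabla u\|_{L^2}\le\|u\|_{H^1}^2$. There is no serious obstacle here: everything is a one-dimensional integration-by-parts plus Cauchy--Schwarz, and the only non-routine step is picking the exponent $q=(p+2)/2$ so that the resulting weighted norms coincide with the $L^p$ and $\dot H^1$ norms appearing on the right-hand side.
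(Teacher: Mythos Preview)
Your proof is correct and follows essentially the same approach as the paper: express $|u(r)|^{(p+2)/2}$ as a tail integral via the fundamental theorem of calculus, insert the factor $s/r\ge 1$, and apply Cauchy--Schwarz with respect to the radial measure $s\,ds$ to recover the $L^p$ and $\dot H^1$ norms. The paper's version is slightly more compressed (it does not spell out the Cauchy--Schwarz split or track the $2\pi$ factors), but the argument is the same Strauss-type estimate, with the same choice of exponent $q=(p+2)/2$.
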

\begin{proof}[Proof of Lemma \ref{apa3}] By density, it suffices to consider smooth compactly supported functions.
Let us then consider $u(x)=\varphi(r)$, with $\varphi\in {\mathcal
D}([0,\infty[)$. Obviously, we have
$$
\varphi(r)^{\frac{p}{2}+1}=-\frac{p+2}{2}\;\int_r^\infty\;\varphi'(s)\,\varphi^{\frac{p}{2}}(s)\;ds\,.
$$
Hence
\begin{eqnarray*}
|\varphi(r)|^{\frac{p}{2}+1}&\leq&\frac{p+2}{2r}\;\int_r^\infty\;|\varphi'(s)|\,|\varphi(s)|^{\frac{p}{2}}\,s\;ds,\\
&\leq&\frac{p+2}{2r}\;\|\nabla u\|_{L^2}\,\|u\|_{L^p}^{\frac{p}{2}}.
\end{eqnarray*}
This achieves the proof of the lemma.
\end{proof}
\begin{rem}
In the general case,  the embedding of $H^1 (\R^2)$ into $L^p
(\R^2)$ is not compact. This observation can be illustrated by the
following example: $u_n(x)=\varphi(x+x_n)$ with
$0\neq\varphi\in{\mathcal D}$ and $|x_n|\to\infty$. However, by
virtue  of Rellich-Kondrachov's theorem, this embedding is compact
in the case of $H_K^1 (\R^2)$ the subset of functions of $H^1
(\R^2)$ supported in the compact $K$. Moreover, in the radial case,
the following compactness result holds.
\end{rem}
\begin{lem}
\label{apa4} Let $2< p<\infty$. The embedding $H^1_{rad}(\R^2)$ in
$L^p(\R^2)$ is compact.
\end{lem}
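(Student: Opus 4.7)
The plan is to prove compactness by the standard ball-versus-tail decomposition, where the radial symmetry enters only through the tail estimate. Given a bounded sequence $(u_n)$ in $H^1_{rad}(\R^2)$, I would first extract, by reflexivity, a subsequence (still denoted $(u_n)$) converging weakly to some $u \in H^1_{rad}(\R^2)$. The goal is to show $u_n \to u$ strongly in $L^p(\R^2)$ for every fixed $2<p<\infty$.

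Fix $R>0$. On the ball $B_R=\{|x|\le R\}$, the classical Rellich--Kondrachov theorem applies to the bounded sequence $(u_n)$ in $H^1(B_R)$ and yields, after a further diagonal extraction over an increasing sequence $R_k\to\infty$, strong convergence $u_n\to u$ in $L^p(B_R)$ for every $R$. Thus the whole issue is to control the contribution from the exterior region $\{|x|>R\}$ uniformly in $n$.

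This is exactly where the radial assumption is used, via Lemma \ref{apa3}. From the pointwise bound $|u_n(x)|\le C |x|^{-1/2}\|u_n\|_{H^1}$ (valid for radial functions) and the boundedness of $(u_n)$ in $H^1$, I would estimate for $p>2$
\begin{equation*}
\int_{|x|>R}|u_n(x)|^p\,dx \;\le\; \Bigl(\sup_{|x|>R}|u_n(x)|\Bigr)^{p-2}\,\|u_n\|_{L^2}^{2}
\;\le\; \frac{C^{p-2}}{R^{(p-2)/2}}\,\|u_n\|_{H^1}^{p-2}\,\|u_n\|_{L^2}^{2}\,,
\end{equation*}
which tends to $0$ uniformly in $n$ as $R\to\infty$, since $p>2$. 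The same bound applies to the limit $u$.

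Combining the two pieces, given $\varepsilon>0$, I first choose $R$ large enough so that both $\int_{|x|>R}|u_n|^p\,dx$ and $\int_{|x|>R}|u|^p\,dx$ are smaller than $\varepsilon$ for all $n$, then use the strong $L^p(B_R)$ convergence obtained from Rellich to make $\int_{B_R}|u_n-u|^p\,dx<\varepsilon$ for $n$ large. A triangle inequality yields $\|u_n-u\|_{L^p(\R^2)}\to 0$, which is the desired compactness. The only nontrivial point is the tail estimate, and the delicate threshold $p>2$ appears precisely there: for $p=2$ the factor $R^{-(p-2)/2}$ degenerates to $1$ and the argument fails, in agreement with the non-compactness of $H^1\hookrightarrow L^2$.
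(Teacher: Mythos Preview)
Your proof is correct and follows essentially the same route as the paper's: Rellich--Kondrachov on balls combined with the radial decay estimate of Lemma~\ref{apa3} to kill the tail, with the exponent condition $p>2$ appearing in exactly the same place. The only superfluous step is the diagonal extraction over $R_k\to\infty$: once $u_n\rightharpoonup u$ in $H^1(\R^2)$, Rellich already gives $u_n\to u$ in $L^p(B_R)$ for every fixed $R$ without further extraction (any subsequential $L^p(B_R)$ limit must coincide with $u$), so the argument goes through for the whole subsequence directly.
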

\begin{proof}[Proof of Lemma \ref{apa4}] The proof is quite standard and can be found in many references (see for example \cite{BL, Kavian, Strauss}).
We sketch it here for the sake of completeness.  $(u_n)$ being  a
sequence in $H^1_{rad}(\R^2)$ which converges weakly to $u\in
H^1_{rad}(\R^2)$, let us set $v_n:=u_n-u$. The problem is then
reduced  to the proof of the fact  that $\|v_n\|_{L^p}$ tends to
zero. On the one hand, using the above Lemma, we get for any $R>0$,
$$
\int_{|x|>R}\;|v_n(x)|^p\,dx=\int_{|x|>R}\;|v_n(x)|^{p-2}\,|v_n(x)|^2\,dx\leq\,
C\,R^{-\frac{p-2}{2}}\,.
$$
On the other hand, we know by Rellich-Kondrachov's theorem that the
injection $H^1(|x|\leq R)$ into $L^p(|x|\leq R)$ is compact. This
ends the proof.\end{proof}

\begin{rem}
 $H^1_{rad}(\R^2)$ is not compactly embedded in $L^2(\R^2)$. To see this,
it suffices to  consider the family
 $u_n(x)=\frac{1}{\alpha_n}\,{\rm e}^{-|\frac{x}{\alpha_n}|^2}$ where $(\alpha_n)$ is a sequence of nonnegative real numbers tending to infinity.
 One can easily show that $(u_n)$ is bounded in $H^1$ but cannot have a subsequence converging strongly in $L^2$.\\
 \end{rem}


\subsection{Appendix B: Some additional properties on Orlicz spaces}
\label{apB}

Here we recall some well known properties of Orlicz spaces. For a
complete presentation and more details, we refer the reader to
\cite{Orlicz-Book}. The first result that we state here deals with
the connection between Orlicz spaces and Lebesgue spaces $L^1$ and
$L^\infty$.
\begin{prop} We have\\
{\bf a)} $\left(L^\phi, \|\cdot\|_{L^\phi}\right)$ is a Banach space.\\
{\bf b)} $L^1\cap L^\infty\subset L^\phi\subset L^1+L^\infty$.\\
{\bf c)} If $T : L^1\to L^1$ with norm $M_1$ and $T : L^\infty\to
L^\infty$ with norm $M_\infty$,  then $T : L^\phi\to L^\phi$ with
norm $\leq C(\phi) \Sup(M_1,M_\infty)$.
\end{prop}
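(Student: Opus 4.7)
For (a), the key is the triangle inequality, which follows from convexity of $\phi$: if $\Int\phi(|u|/a)\,dx\le 1$ and $\Int\phi(|v|/b)\,dx\le 1$, then writing $|u+v|/(a+b)$ as a convex combination of $|u|/a$ and $|v|/b$ and applying $\phi$ gives $\|u+v\|_{L^\phi}\le a+b$; the argument extends to countable sums. To establish completeness, I take a Cauchy sequence $(u_n)$, extract a subsequence $(u_{n_k})$ with $\|u_{n_{k+1}}-u_{n_k}\|_{L^\phi}\le 2^{-k}$, and apply the subadditivity together with monotone convergence to conclude that $\Sum_k|u_{n_{k+1}}-u_{n_k}|$ belongs to $L^\phi$, hence is finite almost everywhere. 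The telescoping series then defines a pointwise limit $u$ of $(u_{n_k})$, and Fatou's lemma applied to $\phi(|u-u_{n_j}|/\eps)$ upgrades the Cauchy condition into $\|u-u_{n_j}\|_{L^\phi}\to 0$.

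For (b), the inclusion $L^1\cap L^\infty\subset L^\phi$ rests on the elementary convexity inequality $\phi(ts)\le t\phi(s)$ for $t\in[0,1]$ (a direct consequence of $\phi(0)=0$): applied with $t=|u|/\|u\|_{L^\infty}$ and $s=\|u\|_{L^\infty}/\lambda$, it yields $\Int\phi(|u|/\lambda)\,dx\le\|u\|_{L^\infty}^{-1}\phi(\|u\|_{L^\infty}/\lambda)\|u\|_{L^1}$, which is $\le 1$ for $\lambda$ large since $\phi(\|u\|_{L^\infty}/\lambda)\to 0$. Conversely, given $u\in L^\phi$ and $\lambda$ with $\Int\phi(|u|/\lambda)\,dx\le 1$, I split $u=u\mathbf{1}_{\{|u|\le\lambda\}}+u\mathbf{1}_{\{|u|>\lambda\}}$; the first piece lies in $L^\infty$ with norm at most $\lambda$, and the second lies in $L^1$ by the monotonicity of $s\mapsto\phi(s)/s$ (also a consequence of convexity with $\phi(0)=0$), which on $\{|u|>\lambda\}$ gives $|u|/\lambda\le\phi(|u|/\lambda)/\phi(1)$, hence $\|u\mathbf{1}_{\{|u|>\lambda\}}\|_{L^1}\le\lambda/\phi(1)$.

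For (c), I would apply a Marcinkiewicz-type splitting. Set $M:=\Sup(M_1,M_\infty)$ and normalize $\|u\|_{L^\phi}\le 1$. For each $t>0$, decompose $u=u^{(t)}+u_{(t)}$ with $u^{(t)}=u\mathbf{1}_{\{|u|\le t\}}$ bounded by $t$ in $L^\infty$, and $u_{(t)}=u\mathbf{1}_{\{|u|>t\}}$ in $L^1$ by (b). Since $\|Tu^{(t)}\|_{L^\infty}\le Mt$, the inclusion $\{|Tu|>2Mt\}\subset\{|Tu_{(t)}|>Mt\}$ combined with Chebyshev and the $L^1$-bound on $T$ yields
\[
|\{|Tu|>2Mt\}|\le\Frac{1}{t}\Int_{\{|u|>t\}}|u(x)|\,dx.
\]
Feeding this into the layer-cake formula $\Int\phi(|Tu|/(2M))\,dx=\Int_0^\infty\phi'(\tau)\,|\{|Tu|>2M\tau\}|\,d\tau$ and applying Fubini bounds the left side by $\Int|u(x)|\Phi(|u(x)|)\,dx$, where $\Phi(r):=\Int_0^r\tau^{-1}\phi'(\tau)\,d\tau$. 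Integration by parts together with the monotonicity of $\tau\mapsto\phi(\tau)/\tau$ then produces an estimate $s\Phi(s)\le C(\phi)\phi(s)$, from which $\|Tu\|_{L^\phi}\le C(\phi)M$ follows after rescaling (using once more that $\phi(s/K)\le\phi(s)/K$ for $K\ge 1$).

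The main obstacle will be precisely the last step of (c), namely the pointwise comparison $s\Phi(s)\le C(\phi)\phi(s)$: this is essentially a $\Delta_2$-type requirement on $\phi$, and it is what forces the constant to depend on $\phi$. For the model Orlicz functions relevant to this paper (polynomial $s^p$ or exponential $\mathrm{e}^{s^2}-1$) the estimate is a routine direct check, but in the fully general convex case it must be extracted from the regularity properties of $\phi$ itself. Parts (a) and (b) are comparatively mechanical once the two convexity inequalities (subadditivity of the norm and $\phi(ts)\le t\phi(s)$ on $[0,1]$) and the monotonicity of $s\mapsto\phi(s)/s$ have been recorded.
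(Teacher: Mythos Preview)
The paper does not actually prove this proposition: it is stated in Appendix~B as a well-known fact and the reader is referred to Rao--Ren \cite{Orlicz-Book}. So your argument must be judged on its own. Parts (a) and (b) are fine and follow the standard route; the convexity facts you record ($\phi(ts)\le t\phi(s)$ for $t\in[0,1]$ and the monotonicity of $s\mapsto\phi(s)/s$) are exactly what is needed.

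Part (c), however, has a genuine gap. Your Marcinkiewicz computation leads to
\[
\Int\phi\!\left(\frac{|Tu|}{2M}\right)dx\;\le\;\Int|u(x)|\,\Phi(|u(x)|)\,dx,\qquad \Phi(r)=\Int_0^{r}\frac{\phi'(\tau)}{\tau}\,d\tau,
\]
and you then need $s\,\Phi(s)\le C(\phi)\,\phi(s)$. But this inequality is \emph{not} available for general $\phi$ as in Definition~\ref{deforl}: if $\phi$ is linear near the origin (which is allowed---$\phi$ is only required to be convex, increasing, with $\phi(0)=0$), then $\phi'(\tau)/\tau\sim 1/\tau$ near $0$ and $\Phi(r)=+\infty$ for every $r>0$. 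More generally, finiteness of $\Phi$ is essentially the condition $\int_0 \phi(\tau)/\tau^2\,d\tau<\infty$, which is a strict extra hypothesis. You acknowledge this as ``the main obstacle'' and say it ``must be extracted from the regularity properties of $\phi$ itself,'' but that is precisely what cannot be done in general; for $\phi(s)=s$ the whole scheme collapses, yet the conclusion ($T:L^1\to L^1$) is trivially true.

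The fix is to abandon the layer-cake route and use instead the Calder\'on rearrangement inequality: for a (sub)linear $T$ bounded on $L^1$ and $L^\infty$ with norms $\le M$, one has $(Tu)^{**}(t)\le M\,u^{**}(t)$ for all $t>0$. The Luxemburg norm is rearrangement-invariant, and the Hardy--Littlewood--P\'olya majorization theorem (if $g^{**}\le f^{**}$ then $\int\phi(g^*)\le\int\phi(f^*)$ for every convex $\phi$ with $\phi(0)=0$) gives directly $\|Tu\|_{L^\phi}\le M\|u\|_{L^\phi}$, in fact with constant $1$ rather than $C(\phi)$. This is the argument behind the reference to \cite{Orlicz-Book}, and it requires no growth restriction on $\phi$ at the origin.
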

The following result concerns the behavior of Orlicz norm against
convergence of sequences.
\begin{lem}
\label{monoto}
 We have the following properties\\
{\bf a)} {\it Lower semi-continuity}:
$$
u_n\to u\quad\mbox{\sf a.e.}\quad\Longrightarrow\quad\|u\|_{\cL}\leq
\Liminf\|u_n\|_{\cL}.
$$
{\bf b)} {\it Monotonicity}:
$$
|u_1|\leq|u_2|\quad\mbox{\sf
a.e.}\quad\Longrightarrow\quad\|u_1\|_{\cL}\leq\|u_2\|_{\cL}.
$$
{\bf c)} {\it Strong Fatou property}:
$$
0\leq u_n\nearrow u\quad\mbox{\sf
a.e.}\quad\Longrightarrow\quad\|u_n\|_{\cL}\nearrow\|u\|_{\cL}.
$$
\end{lem}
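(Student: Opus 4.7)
The plan is to prove the three assertions in order (b), (a), (c), since (b) feeds into the other two and (c) will follow by combining (a) and (b). All three properties are consequences of the monotonicity and continuity of the Young function $\phi(s)=e^{s^2}-1$ together with classical convergence theorems for integrals; the norm $\|\cdot\|_{\cL}$ being defined as an infimum over the sublevel sets
\[
A(u):=\Big\{\lambda>0:\ \int_{\R^2}\phi\bigl(|u(x)|/\lambda\bigr)\,dx\le\kappa\Big\},
\]
everything will be obtained by comparing these sublevel sets.

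For (b), I would argue directly. Fix $\lambda\in A(u_2)$. Since $|u_1|\le|u_2|$ a.e.\ and $\phi$ is nondecreasing, $\phi(|u_1|/\lambda)\le\phi(|u_2|/\lambda)$ a.e., so integrating gives $\lambda\in A(u_1)$. Thus $A(u_2)\subset A(u_1)$ and taking infima yields $\|u_1\|_{\cL}\le\|u_2\|_{\cL}$.

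For (a), set $L=\liminf_n\|u_n\|_{\cL}$ (assume $L<\infty$, otherwise the inequality is trivial). Fix $\varepsilon>0$ and extract a subsequence $(u_{n_k})$ with $\|u_{n_k}\|_{\cL}<L+\varepsilon$. By the observation \eqref{norm1}, we have $L+\varepsilon\in A(u_{n_k})$, i.e.
\[
\int_{\R^2}\phi\bigl(|u_{n_k}(x)|/(L+\varepsilon)\bigr)\,dx\le\kappa .
\]
Since $\phi$ is continuous, $u_{n_k}\to u$ a.e.\ implies $\phi(|u_{n_k}|/(L+\varepsilon))\to\phi(|u|/(L+\varepsilon))$ a.e. Applying Fatou's lemma (the integrands are nonnegative) yields
\[
\int_{\R^2}\phi\bigl(|u(x)|/(L+\varepsilon)\bigr)\,dx\le\kappa,
\]
so $L+\varepsilon\in A(u)$, hence $\|u\|_{\cL}\le L+\varepsilon$. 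Letting $\varepsilon\to 0$ gives (a).

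For (c), monotonicity (b) ensures that $(\|u_n\|_{\cL})$ is nondecreasing and bounded above by $\|u\|_{\cL}$, so the limit $\ell:=\lim_n\|u_n\|_{\cL}$ exists and satisfies $\ell\le\|u\|_{\cL}$. The converse inequality $\|u\|_{\cL}\le\ell$ follows from lower semi-continuity (a) applied to the a.e.\ convergent sequence $u_n\to u$. Combining the two yields $\|u_n\|_{\cL}\nearrow\|u\|_{\cL}$. Alternatively, one can reprove the equality directly using the monotone convergence theorem: for fixed $\lambda>0$, $\phi(u_n/\lambda)\nearrow\phi(u/\lambda)$ by monotonicity of $\phi$, so $\int\phi(u_n/\lambda)\,dx\nearrow\int\phi(u/\lambda)\,dx$, from which $A(u)=\bigcap_n A(u_n)$ and the claim follows.

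There is no substantive obstacle here; the only delicate points are the cases $\|u\|_{\cL}=0$ or $L=\infty$, which are handled trivially, and checking that the integrability of the Fatou integrand is automatic since the upper bound $\kappa$ is uniform along the chosen subsequence.
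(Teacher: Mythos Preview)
Your proof is correct and completely standard: monotonicity via inclusion of the admissible sets $A(u)$, lower semi-continuity via Fatou's lemma applied to $\phi(|u_{n_k}|/(L+\varepsilon))$, and the strong Fatou property by combining the two. The paper does not actually supply a proof of this lemma---it is stated in Appendix~B as a recall of well-known facts with a reference to \cite{Orlicz-Book}---so there is nothing to compare against, but your argument is exactly the natural one.
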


Let us now stress that besides the topology induced by its norm, the
Orlicz space ${\cL}$ is equipped with one  other topology, namely
the mean topology.  More precisely,
\begin{defi}
A sequence $(u_n)$ in ${\cL}$ is said to be mean (or modular)
convergent to $u\in\cL$, if
$$
\Int\,\phi(u_n-u)\,dx\longrightarrow 0.
$$
It is said strongly (or norm) convergent to $u\in\cL$, if
$$
\|u_n-u\|_{\cL}\longrightarrow 0.
$$
\end{defi}
Clearly there is no equivalence between these convergence notions. Precisely, the strong convergence implies the modular convergence but the converse is false as shown by taking the Lions's functions $f_\alpha$. \\
To end this subsection, let us mention that our Orlicz space ${\cL}$ behaves like $L^2$ for functions in $H^1\cap L^\infty$.
\begin{prop}
\label{Orl-L2}
For every $\mu>0$ and every function $u$ in $H^1\cap L^\infty$, we have
\bq
\label{OrL2}
\frac{1}{\sqrt{\kappa}}\,\|u\|_{L^2}\leq \|u\|_{{\cL}}\leq \mu+\frac{{\rm e}^{\frac{\|u\|_{L^\infty}^2}{2\mu^2}}}{\sqrt{\kappa}}\,\|u\|_{L^2}\,.
\eq
\end{prop}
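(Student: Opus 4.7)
\bigskip

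\noindent\textbf{Proof proposal for Proposition \ref{Orl-L2}.}

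The plan is to estimate the integral $\int_{\R^2}\phi(|u(x)|/\lambda)\,dx$ for suitable $\lambda$'s, using only two elementary inequalities for the function $\phi(s)={\rm e}^{s^2}-1$: the convexity-type bound $\phi(s)\geq s^2$ on $[0,\infty)$, and the bound $\phi(s)\leq s^2\,{\rm e}^{T^2}$ valid whenever $0\leq s\leq T$ (which follows from $\mathrm{e}^t-1=\int_0^t\mathrm{e}^\tau d\tau \leq t\,\mathrm{e}^t$ applied to $t=s^2$).

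For the left-hand inequality, I would take $\lambda=\|u\|_{\cL}$, which is admissible by the identity \eqref{norm1}, so that
$$
\kappa\;\geq\;\int_{\R^2}\phi\!\left(\frac{|u(x)|}{\|u\|_{\cL}}\right)dx\;\geq\;\frac{1}{\|u\|_{\cL}^2}\int_{\R^2}|u(x)|^2\,dx\;=\;\frac{\|u\|_{L^2}^2}{\|u\|_{\cL}^2},
$$
which rearranges to $\|u\|_{L^2}\leq \sqrt\kappa\,\|u\|_{\cL}$, i.e. the desired lower bound.

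For the upper bound, fix $\mu>0$ and set
$$
\lambda\;:=\;\mu+\frac{{\rm e}^{\|u\|_{L^\infty}^2/(2\mu^2)}}{\sqrt\kappa}\,\|u\|_{L^2}.
$$
The role of the $\mu$ summand is precisely to guarantee $\lambda\geq\mu$, hence $\|u\|_{L^\infty}/\lambda\leq \|u\|_{L^\infty}/\mu$. Applying the second elementary bound with $s=|u(x)|/\lambda$ and $T=\|u\|_{L^\infty}/\lambda$, I get
$$
\int_{\R^2}\phi\!\left(\frac{|u(x)|}{\lambda}\right)dx\;\leq\;{\rm e}^{\|u\|_{L^\infty}^2/\lambda^2}\,\frac{\|u\|_{L^2}^2}{\lambda^2}\;\leq\;{\rm e}^{\|u\|_{L^\infty}^2/\mu^2}\,\frac{\|u\|_{L^2}^2}{\lambda^2}.
$$
Since $\lambda\geq \sqrt{\kappa}^{-1}{\rm e}^{\|u\|_{L^\infty}^2/(2\mu^2)}\|u\|_{L^2}$ by construction, we have $\lambda^2\geq \kappa^{-1}{\rm e}^{\|u\|_{L^\infty}^2/\mu^2}\|u\|_{L^2}^2$, and substituting this into the previous display bounds the integral by $\kappa$. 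By the definition of the Orlicz norm this gives $\|u\|_{\cL}\leq \lambda$, which is exactly the right-hand inequality.

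There is no real obstacle here; the only subtlety is the careful coupling of the two estimates via the a priori lower bound $\lambda\geq\mu$, which is why the free parameter $\mu$ appears in the statement and cannot simply be dropped.
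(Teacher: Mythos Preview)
Your proof is correct and follows essentially the same route as the paper's own argument: the paper calls the left-hand inequality ``obvious'' and, for the right-hand one, proves the inclusion $\{\lambda\geq \mu+\kappa^{-1/2}e^{\|u\|_{L^\infty}^2/(2\mu^2)}\|u\|_{L^2}\}\subset\{\lambda:\int(e^{|u|^2/\lambda^2}-1)\leq\kappa\}$ via the same chain $e^{t}-1\leq t\,e^{t}$, $\lambda\geq\mu$, and $\lambda^2\geq\kappa^{-1}e^{\|u\|_{L^\infty}^2/\mu^2}\|u\|_{L^2}^2$. The only cosmetic difference is that you spell out the lower bound using $\phi(s)\geq s^2$, which the paper omits.
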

\begin{proof}
The left hand side of \eqref{OrL2} is obvious. The second inequality follows immediately from the following simple observation
$$
\left\{ \lambda\geq \mu+\frac{{\rm e}^{\frac{\|u\|_{L^\infty}^2}{2\mu^2}}}{\sqrt{\kappa}}\,\|u\|_{L^2}\right\}\subset \left\{\lambda>0\,;\quad\int\left({\rm e}^{\frac{|u(x)|^2}{\lambda^2}}-1\right)\,dx\leq \kappa\right\}\,.
$$
Indeed, assuming $\lambda\geq \mu+\frac{{\rm e}^{\frac{\|u\|_{L^\infty}^2}{2\mu^2}}}{\sqrt{\kappa}}$, we get
\begin{eqnarray*}
\int\left({\rm e}^{\frac{|u(x)|^2}{\lambda^2}}-1\right)\,dx&\leq&\int \,\frac{|u(x)|^2}{\lambda^2}\,{\rm e}^{\frac{|u(x)|^2}{\lambda^2}}\,dx\\
&\leq&\frac{{\rm e}^{\frac{\|u\|_{L^\infty}^2}{\mu^2}}}{\lambda^2}\,\|u\|_{L^2}^2\\
&\leq&\kappa.
\end{eqnarray*}
\end{proof}


\subsection{Appendix C: $BMO$ and ${\mathcal L}$}


Now, we shall discuss the connection between the Orlicz space
${\mathcal L}$ and {\rm BMO}. At first, let us recall the
following well known embeddings $$ H^1\hookrightarrow {\rm
BMO}\cap L^2,\quad L^\infty\hookrightarrow{\rm
BMO}\hookrightarrow{\rm B}^0_{\infty,\infty},\quad
H^1\hookrightarrow{\mathcal L}\hookrightarrow\bigcap_{2\leq
p<\infty} L^p. $$ However, there is no comparison between
${\mathcal L}$ and {\rm BMO} in the following sense.
\begin{prop}
\label{bmo} We have
$$
{\mathcal L}\not\hookrightarrow{\rm BMO}\cap L^2\quad\mbox{and}\quad
{\rm BMO}\cap L^2\not\hookrightarrow{\mathcal L}.
$$
\end{prop}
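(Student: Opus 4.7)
My plan is to exhibit explicit functions witnessing each non-embedding. Since any set-theoretic inclusion between Banach spaces is automatically continuous by the closed graph theorem, it suffices to produce a function in $\cL$ that is not in ${\rm BMO}$, and a function in ${\rm BMO}\cap L^2$ that is not in $\cL$. Note that the $L^2$ component of the first direction is automatic, because the elementary inequality $e^{t^2/\lambda^2}-1\geq t^2/\lambda^2$ already yields $\cL\hookrightarrow L^2$ (see Proposition \ref{Orl-L2}).

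For $\cL\not\hookrightarrow{\rm BMO}$, I would use a superposition of well-separated bumps with slowly growing amplitude. Fix points $x_n\in\R^2$, $n\geq 2$, with $|x_n-x_m|>2$ whenever $n\neq m$, set $B_n:=B(x_n,1/n)$ so that the doubled balls $2B_n$ are pairwise disjoint, and define
$$u(x):=\Sum_{n\geq 2}\sqrt{\log n}\,\chi_{B_n}(x).$$
By disjointness, for any $\lambda>0$,
$$\Int_{\R^2}\left(e^{|u|^2/\lambda^2}-1\right)dx=\pi\Sum_{n\geq 2}\frac{n^{1/\lambda^2}-1}{n^2},$$
which converges as soon as $\lambda>1$ and tends to $0$ as $\lambda\to\infty$ by dominated convergence, so $u\in\cL$. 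On each doubled ball one computes directly $u_{2B_n}=\sqrt{\log n}/4$ and
$$\frac{1}{|2B_n|}\Int_{2B_n}|u-u_{2B_n}|\,dx=\frac{3}{8}\sqrt{\log n}\longrightarrow\infty,$$
so $\|u\|_{\rm BMO}=+\infty$ and in particular $u\notin{\rm BMO}\cap L^2$.

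For ${\rm BMO}\cap L^2\not\hookrightarrow\cL$, I would take the truncated logarithm
$$v(x):=\max\bigl(-\log|x|,\,0\bigr),$$
which equals $\log(1/|x|)$ on $B_1$ and vanishes outside. Since $-\log|x|$ is the canonical element of ${\rm BMO}(\R^2)$ and ${\rm BMO}$ is stable under the lattice operation $w\mapsto w^+$, we have $v\in{\rm BMO}$; moreover $\|v\|_{L^2}^2=2\pi\Int_0^1 r(\log(1/r))^2\,dr<\infty$. To show $v\notin\cL$, the change of variable $t=\log(1/r)$ gives, for every $\lambda>0$,
$$\Int_{|x|<1/2}\left(e^{|v|^2/\lambda^2}-1\right)dx=2\pi\Int_{\log 2}^{\infty}\left(e^{t^2/\lambda^2}-1\right)e^{-2t}\,dt,$$
whose integrand behaves like $e^{t^2/\lambda^2-2t}$ and hence diverges at infinity. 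No admissible $\lambda$ exists, so $\|v\|_{\cL}=+\infty$.

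The only delicate point, rather than a genuine obstacle, is the claim $v\in{\rm BMO}$. If one prefers not to invoke the BMO-stability under positive parts, it can be verified by a direct case analysis on balls: those contained in $B_{1/2}$ (where $v$ coincides with $\log(1/|\cdot|)$), those contained in the complement of $B_2$ (where $v\equiv 0$), and the intermediate family straddling $|x|\sim 1$ (for which the mean oscillation is bounded by explicit constants via a crude estimate of $\Int_B v$ and $|B|^{-1}\Int_B|v-v_B|$).
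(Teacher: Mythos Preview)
Your proof is correct, and both halves take a genuinely different route from the paper.

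For $\cL\not\hookrightarrow{\rm BMO}\cap L^2$, the paper does not build a single function as you do; instead it exploits the family $g_\alpha(r,\theta)=f_\alpha(r)e^{i\theta}$ (the Lions functions twisted by an angular phase), computes that $\|g_\alpha\|_{\cL}=\|f_\alpha\|_{\cL}\to 1/\sqrt{4\pi}$ while $\|g_\alpha\|_{\rm BMO}\to\infty$, and concludes that no continuous embedding exists. Your superposition of separated bumps with amplitudes $\sqrt{\log n}$ is more elementary and produces an explicit element of $\cL\setminus{\rm BMO}$, at the cost of being disconnected from the $f_\alpha$ theme running through the paper.

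For ${\rm BMO}\cap L^2\not\hookrightarrow\cL$, the difference is sharper. The paper argues indirectly: it invokes the \emph{sharp} Kozono--Wadade inequality $\|u\|_{L^q}\leq Cq\,\|u\|_{{\rm BMO}\cap L^2}$ and shows that a hypothetical embedding ${\rm BMO}\cap L^2\hookrightarrow\cL$ would improve the growth in $q$ from $q$ to $\sqrt{q}$ (via $(q!)^{1/2q}\sim e^{-1/2}\sqrt{q}$), contradicting sharpness. Your argument with the truncated logarithm $v=(-\log|x|)^+$ is direct and self-contained: it needs only that $\log|x|\in{\rm BMO}$ and that $t\mapsto t^+$ is $1$-Lipschitz (hence ${\rm BMO}$-contracting up to a factor $2$), together with the trivial divergence of $\int e^{t^2/\lambda^2-2t}\,dt$. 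This avoids the external reference entirely and is arguably the more transparent proof; the paper's route, on the other hand, identifies \emph{why} the embedding fails in terms of the $L^q$ growth scale.
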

\begin{proof}[Proof of Proposition \ref{bmo}]

Let us consider $g_\alpha(r,\theta)=f_\alpha(r)\,{\rm e}^{i\theta}$
and $B_\alpha=B(0, {\rm e}^{-\frac{\alpha}{2}})$. Clearly we have
$$
\int_{B_\alpha}\;g_\alpha=0\,.
$$
Moreover
\begin{eqnarray*}
 \frac{1}{|B_\alpha|}\int_{B_\alpha}\;|g_\alpha|&=&2{\rm e}^{\alpha}\,\int_0^{{\rm e}^{-\alpha}}\,\sqrt{\frac{\alpha}{2\pi}}\,r\,dr+\int_{{\rm e}^{-\alpha}}^{{\rm e}^{-\frac{\alpha}{2}}}\,-\frac{\log r}{\sqrt{2\pi\alpha}}\,r\,dr\\&=& \frac{\sqrt{\alpha}}{2\sqrt{2\pi}}+\frac{1-{\rm e}^{-\alpha}}{2\sqrt{2\pi\alpha}}\,.
\end{eqnarray*}
Hence
$$
\|g_\alpha\|_{\rm BMO}\to\infty\quad\mbox{as}\quad
\alpha\to\infty\,.
$$
Since $\|g_\alpha\|_{\mathcal L}=\|f_\alpha\|_{\mathcal
L}\to\frac{1}{\sqrt{4\pi}}$, we deduce that
$$
{\mathcal L}\not\hookrightarrow{\rm BMO}\cap L^2\,.
$$
To show that  ${\rm BMO}\cap L^2$ is not embedded in ${\mathcal L}$,
we shall use the following {\bf sharp} inequality (see
\cite{Kozono})
\begin{equation}
\label{sharp-bmo} \|u\|_{L^q}\leq C\,q\|u\|_{\rm BMO\cap L^2},\quad
q\geq 2,
\end{equation}
together with the fact that (for $u\neq 0$),
\begin{equation}
\label{L} \int_{\R^2}\,\Big({\rm e}^{\frac{|u(x)|^2}{\|u\|_{\mathcal
L}^2}}-1\Big)\,dx\leq \kappa\,.
\end{equation}
Indeed, let us suppose that ${\rm BMO}\cap L^2$ is embedded in
${\mathcal L}$. Then, for any integer $q\geq 1$,
$$
\|u\|_{L^{2q}}\leq
\kappa^{1/2q}\,\left(q!\right)^{1/2q}\,\|u\|_{\mathcal L}\leq
C\,\kappa^{1/2q}\,\left(q!\right)^{1/2q}\,\|u\|_{\rm BMO\cap L^2}\,
$$
which contradicts \eqref{sharp-bmo} since
$$
\left(q!\right)^{1/2q}\sim {\rm e}^{-1/2}\,\sqrt{q},
$$
where $\sim$ is used to indicate that the ratio of the two sides
goes to $1$ as $q$ goes to $\infty$.
\end{proof}

\end{document}